\newtheorem{theorem}{Theorem}[section]
\newtheorem{lemma}[theorem]{Lemma}
\newtheorem{claim}[theorem]{Claim}
\newtheorem{proposition}[theorem]{Proposition}
\theoremstyle{definition}
\newtheorem{definition}[theorem]{Definition}
\theoremstyle{remark}
\newtheorem{remark}[theorem]{Remark}
\numberwithin{equation}{section}
\newcommand{\f}{\varphi}
\newcommand{\cF}{\mathcal{F}}
\newcommand{\cB}{\mathcal{B}}
\newcommand{\Z}{\mathbb{Z}}
\newcommand{\R}{\mathbb{R}}
\newcommand{\bS}{\mathbb{S}}
\newcommand{\bM}{\mathbb{M}}
\newcommand{\supp}{{\rm{spt}\,}}
\newcommand{\spt}{{\rm{spt}}}
\newcommand{\eps}{\varepsilon}
\newcommand{\diam}{\operatorname{diam}}
\newcommand{\dist}{\operatorname{dist}}
\newcommand{\rad}{\operatorname{rad}}
\newcommand{\obeta}{\mathring{\beta}}
\def\XXint#1#2#3{{\setbox0=\hbox{$#1{#2#3}{\int}$ }
\vcenter{\hbox{$#2#3$ }}\kern-.6\wd0}}
\newcommand{\epshplus}{\varepsilon_{n}^+}
\newcommand{\zji}{z_{i}^*}
\newcommand{\DD}{{\mathcal D}}
\newcommand{\HH}{{\mathcal H}}
\newcommand{\Ch}{{\mathcal Ch}}
\newcommand{\rf}[1]{{(\ref{#1})}}
\newcommand{\ve}{{\varepsilon}}
\newcommand{\vv}{{\vspace{2mm}}}
\newcommand{\rest}{{\lfloor}}
\newcommand{\pom}{{\partial\Omega}}
\newcommand{\sss}{{\mathsf {Stop}}}
\newcommand{\ttt}{{\mathsf {Top}}}
\newcommand{\tree}{{\rm Tree}}
\newcommand{\nex}{{\rm Next}}
\def\XXint#1#2#3{{\setbox0=\hbox{$#1{#2#3}{\int}$ }
\vcenter{\hbox{$#2#3$ }}\kern-.58\wd0}}
\definecolor{color1}{RGB}{50,180,50}
\definecolor{color2}{RGB}{50,50,180}
\definecolor{color3}{RGB}{140, 200, 160}
\definecolor{backshade}{RGB}{150, 210, 225}
\definecolor{frontshade}{RGB}{150, 220, 240}
\newcommand\pgfmathsinandcos[3]{%
  \pgfmathsetmacro#1{sin(#3)}%
  \pgfmathsetmacro#2{cos(#3)}%
}
\newcommand\LongitudePlane[3][current plane]{%
  \pgfmathsinandcos\sinEl\cosEl{#2} 
  \pgfmathsinandcos\sint\cost{#3} 
  \tikzset{#1/.style={cm={\cost,\sint*\sinEl,0,\cosEl,(0,0)}}}
}
\newcommand\LatitudePlane[3][current plane]{%
  \pgfmathsinandcos\sinEl\cosEl{#2} 
  \pgfmathsinandcos\sint\cost{#3} 
  \pgfmathsetmacro\yshift{\cosEl*\sint}
  \tikzset{#1/.style={cm={\cost,0,0,\cost*\sinEl,(0,\yshift)}}} %
}
\newcommand\DrawLatitudeCircle[2][1]{
  \LatitudePlane{\angEl}{#2}
  \tikzset{current plane/.prefix style={scale=#1}}
  \pgfmathsetmacro\sinVis{sin(#2)/cos(#2)*sin(\angEl)/cos(\angEl)}
  \pgfmathsetmacro\angVis{asin(min(1,max(\sinVis,-1)))}
  \draw[current plane,thin,black] (\angVis:1) arc (\angVis:-\angVis-180:1);
  \draw[current plane,thin,dashed] (180-\angVis:1) arc (180-\angVis:\angVis:1);
}
\tikzset{%
  >=latex,
  inner sep=0pt,%
  outer sep=2pt,%
  mark coordinate/.style={inner sep=0pt,outer sep=0pt,minimum size=3pt,
    fill=black,circle}%
}
\definecolor{purple}{RGB}{160, 32, 240}
\newcommand\DrawLongitudeCirclearc[2][1]{
  \LongitudePlane{\angEl}{#2}
  \tikzset{current plane/.prefix style={scale=#1}}
  \pgfmathsetmacro\angVis{atan(sin(#2)*cos(\angEl)/sin(\angEl))} %
  \draw[current plane,thick] (90:1) arc (90:180:1);
}
\newcommand\DrawLongitudeCirclearccolor[2][1]{
  \LongitudePlane{\angEl}{#2}
  \tikzset{current plane/.prefix style={scale=#1}}
  \pgfmathsetmacro\angVis{atan(sin(#2)*cos(\angEl)/sin(\angEl))} %
  \draw[current plane,very thick,purple] (90:1) arc (90:180:1);
}
\newcommand\DrawPartialLatitudeCircle[4][1]{
  \LatitudePlane{\angEl}{#2}
  \tikzset{current plane/.prefix style={scale=#1}}
  \pgfmathsetmacro\sinVis{sin(#2)/cos(#2)*sin(\angEl)/cos(\angEl)}
  \pgfmathsetmacro\angVis{asin(min(1,max(\sinVis,-1)))}
  \draw[current plane,thick,purple] (#3:1) arc (#3:#4:1);
}
\newcommand\DrawPartialLatitudeCirclethick[4][1]{
  \LatitudePlane{\angEl}{#2}
  \tikzset{current plane/.prefix style={scale=#1}}
  \pgfmathsetmacro\sinVis{sin(#2)/cos(#2)*sin(\angEl)/cos(\angEl)}
  \pgfmathsetmacro\angVis{asin(min(1,max(\sinVis,-1)))}
  \draw[current plane,thick] (#3:1) arc (#3:#4:1);
}
\newcommand\DrawLongitudeCirclearctoz[2][1]{
  \LongitudePlane{\angEl}{#2}
  \tikzset{current plane/.prefix style={scale=#1}}
  \pgfmathsetmacro\angVis{atan(sin(#2)*cos(\angEl)/sin(\angEl))} %
  \draw[current plane,very thick,green] (90:1) arc (90:131:1);
}
\tikzset{%
  >=latex,
  inner sep=0pt,%
  outer sep=2pt,%
  mark purple coordinate/.style={inner sep=0pt,outer sep=0pt,minimum size=3pt,
    fill=purple,circle}%
}
\title[Quantitative Carleson's conjecture]{Quantitative Carleson's conjecture for Ahlfors regular domains}
\author{Emily Casey}
\address{Emily Casey \newline
Department of Mathematics, University of Washington \newline
C138 Padelford Hall Box 354350, Seattle, WA 98195, USA}
\email{\href{mailto:ecasey4@uw.edu}{ecasey4@uw.edu}}
\author{Xavier Tolsa}
\address{Xavier Tolsa \newline
ICREA, Passeig Llu\'{i}s Companys 23 08010 Barcelona,  Catalonia; Department de Matem\`{a}tiques,
Universitat Aut\`{o}noma de Barcelona, 08193 Bellaterra, Catalonia; and Centre de Recerca Matem\`{a}tica, 08193 Bellaterra, Catalonia.}
\email{\href{mailto:xavier.tolsa@uab.cat}{xavier.tolsa@uab.cat}}
\author{Michele Villa}
\address{Michele Villa \newline
IKERBASQUE, Basque Foundation for Science 
Plaza Euskadi 5, 48009 Bilbao, Spain; Euskal Herriko Unibertsitatea/Universidad del Pa\'{i}s Vasco Enparantza Torres Quevedo Ingeniariaren, 1C, Basurtu-Zorrotza, 48013 Bilbo, Bizkaia, Spain
}
\email{\href{mailto:michele.villa@ehu.eus}{michele.villa@ehu.eus}}
	\keywords{Carleson $\ve^2$ conjecture, uniform rectifiability, uniform domains, Alt-Caffarelli-Friedman monotonicity formula}
\thanks{E.C. would like to thank the second named author, Xavier Tolsa, for his mentorship and generosity while she was visiting Universitat Aut\`{o}noma de Barcelona (UAB). E.C. would also like to thank the third named author, Michele Villa, for his mentorship. E.C. was partially supported by NSF grant DMS-1954545 and the European Research Council (ERC) under the European Union's Horizon 2020 research and innovation programme (grant agreement 101018680) during her stay at UAB}
    \thanks{X.T. was supported by the European Research Council (ERC) under the European Union's Horizon 2020 research and innovation programme (grant agreement 101018680). 
 Also partially supported by MICINN (Spain) under the grant PID2020-114167GB-I00, the María de Maeztu Program for units of excellence (Spain) (CEX2020-001084-M), and 2021-SGR-00071 (Catalonia). }
\thanks{M.V. was supported by the European Union MSCA IF QuReViMal, no. 101108515, and by an IKERBASQUE starting grant.}
	\subjclass{28A75 (primary), 28A78, 42B37
 (secondary)} 
\begin{document}
\maketitle

\begin{abstract}
   In this article, we prove a quantitative version of Carleson's $\ve^2$ conjecture in higher dimension: we characterise those Ahlfors-David regular domains in $\R^{n+1}$ for which the Carleson's coefficients satisfy the so-called strong geometric lemma.
\end{abstract}
\tableofcontents

\section{Introduction}
Our aim in this article is to prove a quantitative version of the Carleson's $\ve^2$ conjecture in arbitrary dimensions, where David and Semmes' strong geometric lemma for $\beta$-numbers \cite{DS91} will serve as a model result.

Consider a Jordan domain $\Omega_1$ in the plane and let $x \in \partial \Omega_1$ and $r>0$. Denoting by $I_1(x,r)$ the longest open arc fully contained in $\Omega_1 \cap \partial B(x,r)$, and by $I_2(x,r)$ the corresponding arc in $\Omega_2=\R^2\setminus \overline \Omega_1$, we set 
\begin{equation*}
		\varepsilon(x,t)=\frac{1}{r}\max\{|\pi t- \mathcal{H}^1(I_1(x,t))|, |\pi t- \mathcal{H}^1(I_2(x,t))| \}.
\end{equation*}
In 1989, Chris Bishop, Lennart Carleson, John Garnett and Peter Jones \cite{BCGJ} proved that at $\HH^1$-almost all (double-sided) tangent points of the common boundary $\partial \Omega_i$ we have
\begin{equation}\label{vefinite}
	\int_0^1 \ve(x,r)^2 \, \frac{dr}{r} <+ \infty.
\end{equation}
The geometric intuition behind this is clear: $\partial \Omega_i$ looks flatter and flatter as we zoom in around a tangent point. Then $\ve(x,r)$ should decay to $0$, as the arc $I_i(x,r)$ becomes closer and closer to a semicircumference.
As reported in Bishop's thesis \cite{Bi87}, Carleson asked whether the converse is true. That question came to be known as the Carleson's $\ve^2$ conjecture. It proved rather influential, motivating for example the corresponding result for the $\beta$ coefficients\footnote{See Section \ref{s:prelim}. These coefficients are another way to measure local flatness of sets (or domains boundary).} by Bishop and Jones \cite{BJ94}. It was finally proved in \cite{JTV21}.

We introduced Carleson's $\ve^2$ conjecture. What about our quantitative `model result', David and Semmes' strong geometric lemma? Before any further explanation, a couple of definitions are in order. First: a set $E \subset \R^{n+1}$ is said to be Ahlfors-David $n$-regular, $n$-ADR for short, if for each point $x \in E$, and $0<r< \diam(E)$, $\HH^n (B(x,r) \cap E) \approx r^n$. This definition quantifies having positive and finite $n$-Hausdorff measure. Next, there is an integral and uniform version of \eqref{vefinite} for the $\beta$-coefficients, which reads
\begin{equation}\label{sgl}
	\int_{B \cap E} \int_0^{r(B)} \beta_{E,2}(x,r)^2 \, \frac{dr}{r} \,d\HH^n(x) \lesssim r(B)^n
\end{equation}
for any ball $B$ centered on $E$ (see \eqref{eqdefbeta} for the precise definition of $\beta_{E,2}(x,r)$). The geometric conclusion to be drawn from $n$-ADR and \eqref{sgl} is that $E$ is uniformly $n$-rectifiable (UR) (this is, in fact, a characterisation, again see \cite{DS91}). Recall that a set $E \subset \R^{n+1}$ is $n$-rectifiable if $\HH^n(E)<\infty$ and there exists a countable family of Lipschitz functions $f_i: \R^n  \to \R^{n+1}$ such that 
\begin{equation*}
	\HH^n \bigg( E \setminus \bigcup_{i} f_i(\R^n) \bigg) = 0.
\end{equation*}
It is true, in particular, that for any $x \in E$, $r>0$, there exists a Lipschitz function $f_i$ so that $\HH^n(E \cap B(x,r) \cap f(\R^n)) >0$.
Uniform rectifiability is a quantitative strengthening of this: given two constants $L \geq 1$, $\theta >0$, it asks that for each point $x \in E$ and $0<r<\diam(E)$, there exists a Lipschitz function $f: \R^n \supset B(0,r)\to \R^{n+1}$ with Lipschitz constant $\leq L$ so that 
\begin{equation*}
	\HH^n(E \cap B(x,r) \cap g(B(0,r))) \geq \theta r^n.
\end{equation*}

We have described our model result, which should now be reformulated in terms of the $\ve$-coefficients. For planar Jordan domains, however, $1$-ADR of the boundary immediately implies $1$-UR, without further hypotheses. The question of a strong geometric lemma for the $\ve$ coefficients, then, is not very interesting. 

In \cite{FTV23} and \cite{FTV24} Fleschler, together with the second and third named authors, introduced a higher dimensional analogue of $\ve$, which from now on we refer to as $a$. Its definition, which is coming soon, is in terms of first Dirichlet eigenvalues of domains but it has a very clear geometric significance. Indeed, in the plane, $\ve \approx a$. This computation might be found in \cite{FTV23}, Page 9, but see also \cite{AKN22}. For general domains in $\R^{n+1}$, it is no longer true that Ahlfors $n$-regularity of $\partial \Omega$ implies $n$-UR without further hypotheses. This makes our problem - whether a strong geometric lemma for the $a$ coefficients might hold - rather more interesting. Indeed, its solution is our first result:

\begin{theorem}\label{t:main1}
	Let $\Omega \subset \R^{n+1}$ is an open set, and suppose that $\partial \Omega$ is $n$-ADR. Then $\Omega$ is a two-sided corkscrew open set, and thus UR, if and only if there exists a constant $C\geq 1$ such that 
	\begin{equation}
		\int_{B\cap \partial \Omega} \int_{0}^{r(B)} a(x,r) \, \frac{dr}{r} \, d \HH^n(x) \leq C r(B)^n
	\end{equation}
	for every ball $B$ centered on $\partial \Omega$. 
\end{theorem}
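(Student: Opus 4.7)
The plan is to prove each direction of the biconditional separately, taking as a model the qualitative higher-dimensional $\ve^2$-conjecture established in \cite{FTV23,FTV24} by the last two authors with Fleschler. The clause ``and thus UR'', i.e.\ that two-sided corkscrew plus $n$-ADR implies UR, is invoked from the standard theory of uniform domains.

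\textbf{The easier direction ($a$-Carleson $\Rightarrow$ two-sided corkscrew).} The argument proceeds by contradiction. Suppose the Carleson estimate holds, yet some $x_0 \in \partial\Omega$ and $r_0 > 0$ witness a failure of the corkscrew condition on one side, so that side of $\Omega$ contains no ball of radius $\delta r_0$ inside $B(x_0, r_0)$. Then every point of that side inside $B(x_0,(1-\delta)r_0)$ lies within $\delta r_0$ of $\partial\Omega$, so by $n$-ADR its Lebesgue measure is at most $C_n\,\delta\, r_0^{n+1}$. Faber--Krahn then gives a first Dirichlet eigenvalue $\gtrsim \delta^{-2/(n+1)} r_0^{-2}$, which vastly exceeds the half-ball value $\lambda_0 r_0^{-2}$ for $\delta$ small, forcing $a(x_0,r_0) \geq c(\delta) > 0$. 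A stability argument in $(x,r)$, using continuity of first Dirichlet eigenvalues under small translations and dilations together with $n$-ADR, propagates this lower bound to $a(x,r) \geq c'(\delta)$ on a uniform window $x \in \partial\Omega \cap B(x_0,r_0/2)$, $r \in [c_0 r_0, r_0/2]$. Integrating against $d\HH^n(x)\,dr/r$ gives a lower bound $\gtrsim c(\delta) r_0^n$, contradicting the Carleson estimate once $\delta$ is small enough relative to $C$.

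\textbf{The main direction (two-sided corkscrew $+$ $n$-ADR $\Rightarrow$ $a$-Carleson).} The central tool is the Alt--Caffarelli--Friedman (ACF) monotonicity formula anticipated by the paper's keywords. For $z \in \partial\Omega$ and nonnegative subharmonic functions $u_1, u_2$ supported on the two sides of $\partial\Omega$ and vanishing on $\partial\Omega$, set
\[
J_z(r) \,=\, \frac{1}{r^4}\prod_{i=1,2}\int_{B(z,r)}\frac{|\nabla u_i(y)|^2}{|y-z|^{n-1}}\,dy.
\]
The ACF formula asserts that $r \mapsto J_z(r)$ is nondecreasing. The strategy has three steps. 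First, construct $u_1, u_2$ at scale $r(B)$ as either capacity potentials or first Dirichlet eigenfunctions of the intersection of each side with $B(z,R)$ for $R$ slightly larger than $r(B)$; the two-sided corkscrew hypothesis together with standard elliptic estimates gives $J_z(r(B)) \lesssim 1$ uniformly in $z$. Second, prove the pointwise estimate
\[
a(z,r) \,\lesssim\, r\, \partial_r J_z(r),
\]
so that $\int_0^{r(B)} a(z,r)\frac{dr}{r} \lesssim J_z(r(B)) \lesssim 1$. Third, integrate in $z \in B \cap \partial\Omega$ and use $\HH^n(B \cap \partial\Omega) \lesssim r(B)^n$ from $n$-ADR to conclude.

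\textbf{Main obstacle.} The second step in the main direction is the heart of the argument. The quantity $a(z,r)$ measures spectrally the defect of the configuration at $(z,r)$ from a flat half-space, while $r\,\partial_r J_z(r)$ measures the same defect in the quantities arising in the proof of the ACF formula; since equality in ACF characterizes half-space configurations, the two are morally comparable. Turning this comparison into a uniform quantitative bound requires a careful analysis of the boundary traces of $u_i$ on $\partial B(z,r)$, via Cauchy--Schwarz and spherical Poincar\'e inequalities, together with the $n$-ADR structure of $\partial\Omega$. Maintaining the estimate uniformly in $(z,r)$ is the main technical challenge; once it is in hand, the remaining steps are essentially routine.
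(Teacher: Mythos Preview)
Your plan for the easier direction is reasonable in spirit, but as written it conflates ball eigenvalues with sphere eigenvalues: $a(x,r)$ is defined through the first Dirichlet eigenvalue of the \emph{spherical} slice $\Omega_i\cap\partial B(x,r)$, not of $\Omega_i\cap B(x,r)$, so a direct appeal to Euclidean Faber--Krahn does not produce a lower bound on $a(x_0,r_0)$. One can fix this by using coarea to pass from small $(n{+}1)$-volume to small $\HH^n$-measure of the spherical slice at most radii, and then invoking a spherical isoperimetric/Faber--Krahn bound; the ``stability'' step likewise cannot rely on continuity of eigenvalues under translation of the center (which fails in general), though the volume argument is robust. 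The paper sidesteps all of this by routing the implication through Theorem~\ref{teofac***} and a short geometric argument with $\ve_n$.

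The main direction has a genuine gap. The ACF formula (Theorem~\ref{teoACF-elliptic}) yields
\[
\frac{\partial_r J_z(r)}{J_z(r)} \;\geq\; \frac{2}{r}\bigl(\alpha_1(z,r)+\alpha_2(z,r)-2\bigr),
\]
so integration gives $\int_\rho^{r(B)} a(z,t)\,\tfrac{dt}{t}\lesssim \log J_z(r(B))-\log J_z(\rho)$, not the additive bound your step~3 uses. Your claimed pointwise estimate $a(z,r)\lesssim r\,\partial_r J_z(r)$ is tantamount to a \emph{uniform lower bound} $J_z(r)\gtrsim 1$ down to $r=0$, and the corkscrew-based upper bound $J_z(r(B))\lesssim 1$ points the wrong way for this. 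In the paper, the lower bound on $J_z(\rho)$ is obtained (Lemma~\ref{lem: bounding by logs CAD}) in terms of the harmonic-measure densities $\theta_{\omega_i}(z,\rho)$, and the resulting integral $\int\log\bigl(\theta_i(z,r)/\theta_i(z,\rho)\bigr)\,d\sigma$ is then controlled (Lemma~\ref{lem2}) using $\omega_i\in A_\infty(\sigma)$. Both steps require the domain to be chord-arc (NTA with ADR boundary), which a two-sided corkscrew open set is \emph{not} in general: there are no Harnack chains, the Jerison--Kenig boundary estimates are unavailable, and $\omega^\pm$ need not be $A_\infty$. This is exactly why the paper cannot run ACF directly on $\Omega^\pm$: it first uses uniform rectifiability to perform a corona decomposition, building on each tree a pair of Lipschitz subdomains $\Omega_Q^\pm\subset\Omega^\pm$ (these \emph{are} CADs), applies Lemmas~\ref{lem: bounding by logs CAD} and~\ref{lem2} there, and sums via the Carleson packing of the corona. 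Your proposal is missing this entire structural layer, and without it the integral $\int_0^{r(B)} a(z,t)\,\tfrac{dt}{t}$ cannot be bounded pointwise in $z$.
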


We now proceed to define $a$, together with another coefficient introduced in \cite{FTV23, FTV24}, there named $\ve_n$ (not to be confused with the `simple' $\ve$). We remark that the coefficient $a$ in Theorem \ref{t:main1} is associated with $\Omega_1=\Omega$ and $\Omega_2=\R^{n+1}\setminus
\overline{\Omega_1}$.

\subsection{Definition of $a$: spherical domains and their characteristic constants}
Given a bounded open set $V$ in a Riemannian manifold $\mathbb M^n$ (such as $\R^n$ or $\mathbb{S}^n$), we say that $u\in W^{1,2}_0(V)$ is a Dirichlet eigenfunction of
$V$ for the Laplace-Beltrami operator $\Delta_{\bM^n}$ if $u\not\equiv 0$ and
$$-\Delta_{\bM^n} u = \lambda\,u,$$
for some $\lambda\in \R\setminus \{0\}$. The number $\lambda$ is the eigenvalue associated to $u$. It is well known that all the eigenvalues of the Laplace-Beltrami operator are positive and the smallest one, i.e., the first eigenvalue $\lambda_V$, satisfies
\begin{equation}\label{eqlambda1*}
	\lambda_V = \inf_{u\in  W^{1,2}_0(V)} \frac{\int_V|\nabla u|^2\,dx}{\int_V|u|^2\,dx}.
\end{equation}

Further, the infimum is attained by an eigenfunction $u$ which does not change sign, and so which can be assumed to be non-negative. 
Also, from \rf{eqlambda1*} we infer that, if that $U,V\subset \mathbb M^n$ are open, then
\begin{equation}\label{eqVV'}
	U\subset V \quad \Rightarrow \quad \lambda_U \geq \lambda_{V}.
\end{equation}
In the case $\mathbb M^n=\bS^n$, to be sure the one of interest here, the characteristic constant of $V$ is defined as the positive number $\alpha_V$ such that $\lambda_V = \alpha_V(n-1+\alpha_V)$. Indeed, we now specialise our discussion to $\bS^n$.

Given two disjoint open sets $\Omega_1,\Omega_2\subset \R^{n+1}$ and $x\in\R^{n+1}$, $r>0$, put $S(x,r):= \partial B(x,r)$ and consider
the sets
$V_i(x,r) =\{r^{-1}(x-y) \, : \, y \in S(x,r)\cap \Omega^i\}$. We then define
\begin{equation}\label{eqalfai}
	\alpha_i(x,r) := \alpha_{V_i(x,r)}.
\end{equation}
By the Friedland-Hayman inequality \cite{FH}, it turns out that 

$$\alpha_1(x,r) + \alpha_2(x,r) - 2 \geq0.$$
The aforementioned computation shows that, in the plane
\begin{equation*}
	\ve(x,r)^2 \approx \min \left\{ 1, \alpha_1(x,r) + \alpha_2(x,r) -2 \right\}.
\end{equation*}
The presence of the minimum here is due to the fact that as $V_i(x,r)$ grow small, $\alpha_i(x,r)$ tends to infinity. Thus set
\begin{equation*}
	a(x,r) := \min \left\{ 1, \alpha_1(x,r) + \alpha_2(x,r) -2 \right\}.
\end{equation*}

\subsection{Definition of $\ve_n$: a more explicitly geometric coefficient}
The attentive reader might remember what was said above: that $a$ has a `very clear geometric significance'. She might now be puzzling over our notion of clarity - understandingly. Thus let us introduce the further coefficient $\ve_n$, through which we'll amend our expository shortcomings.

Given two arbitrary disjoint Borel sets $\Omega_1$, $\Omega_2$ $\subset \mathbb{R}^{n+1}$, and $x\in \mathbb{R}^{n+1}$, $r>0$, define 
\begin{equation*}
	\varepsilon_n(x,r):=\frac{1}{r^n} \inf_{H^+}\mathcal{H}^n\left(((\partial B(x,r)\cap H^+)\setminus \Omega_1) \cup ((\partial B(x,r)\cap H^-)\setminus \Omega_2)\right), 
\end{equation*}
where the infimum is taken over all open affine half-spaces $H^+$ such that $x\in \partial H^+$ and $H^-=\mathbb{R}^{n+1}\setminus \overline{H^+}$. A minute's thought will clarify the geometric significance of this coefficient: if $\Omega_1$ is an half space and $\Omega_2$ its complementary, then $\varepsilon_n \equiv 0$ on the common boundary. Moreover, if we compute $\ve_n$ for a Jordan domain $\Omega$ in the plane and its complement, then one may check that $\ve_n \lesssim \ve$.

In any case, what binds $a$ and $\ve_n$ together is the following theorem, which substitutes the rather more direct computation in the plane, already mentioned above. 

\begin{theorem}[\cite{FTV24}]\label{teofac***}
	Let $V_1,V_2\subset \bS^n$ be disjoint relatively open sets and let $\ve_n(0,1)$ be defined as above, with $\Omega_i$
	replaced by $V_i$. Let $\alpha_i=\alpha_{V_i}$ for $i=1,2$.
	Then
	$$\ve_n(0,1)^2 \lesssim a(0,1).$$
\end{theorem}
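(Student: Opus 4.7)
The theorem is a quantitative stability version of the Friedland-Hayman inequality $\alpha_1 + \alpha_2 \geq 2$, whose equality case is a pair of complementary open hemispheres of $\bS^n$. The plan is therefore to prove exactly such a stability statement: if the gap $\alpha_1 + \alpha_2 - 2$ is small, then $V_1, V_2$ must be close to a pair of complementary hemispheres in the measure-theoretic sense that $\ve_n(0,1)$ controls, and the witnessing half-space can be plugged into the infimum defining $\ve_n(0,1)$ to conclude.

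I would first dispose of the easy regime. If $\alpha_1 + \alpha_2 - 2 \geq c_0$ for some fixed dimensional constant $c_0 > 0$, then $a(0,1) \geq \min\{1, c_0\}$, while $\ve_n(0,1)^2 \leq \HH^n(\bS^n)^2$ is bounded by a dimensional constant, and the inequality is trivial. Hence from now on we may assume $\alpha_1 + \alpha_2 - 2$ is as small as we wish.

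The core of the proof is then a two-step argument for this small-gap regime. First, via a Sperner-type two-sided spherical cap symmetrization, one produces antipodal open caps $V_1^*, V_2^*$ with $\HH^n(V_i^*) = \HH^n(V_i)$ and $\alpha_{V_i^*} \leq \alpha_{V_i}$; in particular $\alpha_{V_1^*} + \alpha_{V_2^*} - 2 \leq \alpha_1 + \alpha_2 - 2$. Second, for antipodal caps of angular radii $\theta_1, \theta_2$ with $\theta_1 + \theta_2 \leq \pi$, the map $(\theta_1,\theta_2) \mapsto \alpha_{V_1^*} + \alpha_{V_2^*}$ is an explicit smooth function whose minimum value $2$ is attained at $\theta_1 = \theta_2 = \pi/2$, and one verifies that this is a nondegenerate critical point. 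A Taylor expansion around this hemisphere configuration then yields
\[
\alpha_{V_1^*} + \alpha_{V_2^*} - 2 \;\gtrsim\; \bigl(\HH^n(V_1^*\triangle H^+)\bigr)^2 + \bigl(\HH^n(V_2^*\triangle H^-)\bigr)^2,
\]
where $H^+$ is the hemisphere centered at the pole of $V_1^*$ and $H^- = \bS^n \setminus \overline{H^+}$. Combining the two steps and tracking the $L^1$-cost of the symmetrization, the half-space $H^+$ is an admissible competitor in the infimum defining $\ve_n(0,1)$, and the estimate above, once squared, gives $\ve_n(0,1)^2 \lesssim \alpha_1 + \alpha_2 - 2 = a(0,1)$.

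The main obstacle is producing a two-sided symmetrization that simultaneously yields antipodal caps, does not increase either eigenvalue, and faithfully tracks the $L^1$-distance between $V_i$ and $V_i^*$, so that the quadratic cap-stability really does transfer back to the original pair $V_1, V_2$; without the last property, the defect could be ``hidden'' by rearrangement. A cleaner alternative, in the spirit of \cite{AKN22}, is a direct second-variation expansion on $\bS^n$: take the normalized first Dirichlet eigenfunctions of $V_1, V_2$, compare them to the eigenfunctions of a nearest pair of complementary hemispheres through a Fuglede-type linearization of the Rayleigh quotient, and read off quadratic control of the symmetric differences from the sharp form of Friedland-Hayman. Either route leads to the same heart of the matter: a sharp quadratic stability estimate for the hemisphere pair, with exponent exactly $1$ on $a(0,1)$, which is precisely what matches the exponent $2$ on $\ve_n(0,1)$ in the conclusion.
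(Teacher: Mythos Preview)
The paper does not give its own proof of this statement: Theorem~\ref{teofac***} is quoted from \cite{FTV24} and used as a black box, so there is no argument in the present paper to compare your sketch against.

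As for your outline, the overall architecture---dispose of the large-gap regime trivially and then prove a quantitative stability version of Friedland--Hayman near the hemisphere pair---is the right one. But the symmetrization route, as written, has a genuine gap that you yourself flag as ``the main obstacle'' and then do not close. Sperner symmetrization gives antipodal caps $V_i^*$ with $\HH^n(V_i^*)=\HH^n(V_i)$ and $\alpha_{V_i^*}\le \alpha_{V_i}$, and your cap expansion then controls $\bigl|\HH^n(V_i)-\tfrac12\HH^n(\bS^n)\bigr|$. This says only that the \emph{measures} of $V_1,V_2$ are close to half the sphere; it says nothing about where $V_1,V_2$ sit, and hence nothing about $\ve_n(0,1)$ for the original pair. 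A set with $\HH^n(V)=\tfrac12\HH^n(\bS^n)$ can have $\HH^n(H^+\setminus V)$ of order one for every hemisphere $H^+$. To transfer back you need a sharp quantitative Faber--Krahn inequality on the sphere: if $\alpha_{V}-\alpha_{V^*}$ is small then $V$ is $L^1$-close to a spherical cap, with the correct quadratic dependence. That is a substantial theorem in its own right---it is essentially the content of \cite{AKN22} in this setting---and it is precisely what your sketch invokes without supplying. The alternative Fuglede-type second-variation approach you mention is likewise only gestured at. In short, you have correctly identified that the theorem is a sharp stability statement for Friedland--Hayman, but neither of the two routes you propose is carried far enough to constitute a proof; the hard analytic step is named but not done.
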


\noindent
Of course, this theorem implies that, given two disjoint open subsets $\Omega_1, \Omega_2 \subset \R^{n+1}$, $x \in \R^{n+1}$ and $r>0$, we have $\ve_n (x,r)^2 \lesssim a(x,r)$. 

Having said this, let us state a more complete version of our main result. 

\begin{theorem}\label{t:main2}
	Let $\Omega_1$ and $\Omega_2$ be two disjoint open subsets of $\R^{n+1}$. Suppose that $\mu$ is an $n$-ADR measure with $\spt(\mu) = \partial \Omega_1 \cup \partial \Omega_2$. Then the following are equivalent.
	\begin{enumerate}
		\item $\Omega_1$ and $\Omega_2$ are complementary two-sided corkscrew open sets, and in particular $\mu$ is uniformly $n$-rectifiable.
		\item There is a constant $C_1$ so that for each ball $B$ centered on $\spt(\mu)$ it holds
		\begin{equation*}
			\int_B \int_0^{r(B)} \ve_n (x,r)^2 \, \tfrac{dr}{r} \, d\mu(x) \leq C_1 r(B)^n.
		\end{equation*}
		\item There is a contant $C_1$ so that for each ball $B$ centered on $\spt(\mu)$ we have
		\begin{equation*}
			\int_B \int_{0}^{r(B)} a(x,r) \, \tfrac{dr}{r} \, d\mu(x) \leq C_1 r(B)^n.
		\end{equation*}
	\end{enumerate}
\end{theorem}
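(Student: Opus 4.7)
The implication $(3)\Rightarrow(2)$ follows at once from Theorem~\ref{teofac***}, which gives the pointwise bound $\ve_n(x,r)^2\lesssim a(x,r)$. The substance of the theorem lies in proving $(1)\Rightarrow(3)$ and $(2)\Rightarrow(1)$.

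For $(1)\Rightarrow(3)$, my plan is to use the Alt--Caffarelli--Friedman (ACF) monotonicity formula, which is the quantitative counterpart of the tool used in the qualitative Carleson $\ve^2$ conjecture \cite{JTV21}. For each $x\in\spt(\mu)$ one chooses two non-negative, continuous, subharmonic functions $u_1,u_2$ with $\spt(u_i)\subset\overline{\Omega_i}$ and $u_i|_{\partial\Omega_i}=0$; a natural choice is $u_i(y)=\dist(y,\partial\Omega)\mathbf{1}_{\Omega_i}(y)$, possibly replaced by a Green's function with pole at a corkscrew point of $\Omega_i$ when more flexibility is needed. Setting
\[
\Phi(x,r)\;=\;\frac{1}{r^{4}}\prod_{i=1,2}\int_{B(x,r)}\frac{|\nabla u_i(y)|^{2}}{|y-x|^{n-1}}\,dy,
\]
the ACF theorem, combined with the identification of the first Dirichlet eigenvalue of $V_i(x,r)\subset\bS^{n}$ with $\alpha_i(\alpha_i+n-1)$, gives $\Phi(x,\cdot)$ non-decreasing with a sharpened growth rate
\[
\tfrac{d}{dr}\log\Phi(x,r)\;\geq\;\tfrac{2}{r}\bigl(\alpha_1(x,r)+\alpha_2(x,r)-2\bigr).
\]
Integrating in $r$ converts the Carleson integrand into a telescoping difference
\[
\int_{0}^{r(B)}\bigl(\alpha_1+\alpha_2-2\bigr)(x,r)\,\tfrac{dr}{r}\;\lesssim\;\log\Phi(x,r(B))-\log\Phi(x,0^{+}).
\]
The two-sided corkscrew property prevents either factor of $\Phi(x,0^{+})$ from collapsing, giving a uniform positive lower bound on $\Phi(x,0^{+})$; the $n$-ADR condition bounds $\Phi(x,r(B))$ in $L^{1}(\mu\restr B)$ by a standard energy estimate. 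Passing from $\alpha_1+\alpha_2-2$ to its truncated version $a$ requires controlling separately the scales where the cap ``$\min\{1,\cdot\}$'' is active, and Fubini then yields the desired Carleson bound.

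For $(2)\Rightarrow(1)$, my plan is to exploit Chebyshev to show that for any $\eta>0$ the set of ``bad'' pairs $(x,r)\in B\times(0,r(B))$ with $\ve_n(x,r)>\eta$ is Carleson-packed with packing constant $\lesssim C_1/\eta^2$. At any good pair, the definition of $\ve_n$ supplies an affine hyperplane $P$ through $x$ such that each half of $\partial B(x,r)$ cut out by $P$ lies, up to an $\eta r^n$ error, in the corresponding $\Omega_i$. A radial Fubini argument then upgrades this to solid corkscrew balls of radius $\sim r$ in both $\Omega_1\cap B(x,r)$ and $\Omega_2\cap B(x,r)$. A corona/stopping-time decomposition in the spirit of David--Semmes simultaneously (i) assembles these approximating hyperplanes into a $\beta_\infty$-type Carleson bound, implying uniform rectifiability of $\spt(\mu)$, and (ii) upgrades ``two-sided corkscrew at most $(x,r)$'' into ``two-sided corkscrew at every $(x,r)$'' via a quantitative connectedness argument exploiting the UR conclusion.

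The step I expect to be hardest is the quantitative side of $(1)\Rightarrow(3)$: one must engineer $u_i$ so that the ACF functional $\Phi(x,r)$ is bounded in $L^{1}(\mu)$ at the top scale, uniformly bounded from below at the bottom scale (for $\mu$-a.e.\ $x$), and so that the pointwise ACF differential inequality sharpens --- on average --- into a Carleson measure bound. The truncation built into $a$ is convenient (it avoids the degeneracy $\alpha_i\to\infty$ as $V_i(x,r)$ shrinks) but demands a separate packing argument for its bad scales, which is where the two-sided corkscrew hypothesis interacts most delicately with the ACF formula.
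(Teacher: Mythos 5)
Your overall architecture ($(3)\Rightarrow(2)$ via Theorem \ref{teofac***}; $(2)\Rightarrow(1)$ by producing corkscrews at good scales and then invoking the David--Jerison/Semmes machinery; $(1)\Rightarrow(3)$ via ACF) matches the paper, and your $(2)\Rightarrow(1)$ sketch is essentially the paper's argument (Chebyshev to find a good point and scale, a radial Fubini computation showing the annulus has volume $\approx\frac12\HH^{n+1}(A_0)$ in each of $\Omega_1$, $\Omega_2$, a Vitali covering argument showing the $s$-neighbourhood of $\spt\mu$ has small volume, hence corkscrew points; UR is then quoted from \cite{DJ90}, \cite{Semmes90} rather than re-proved via a $\beta_\infty$ corona as you suggest). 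Two small omissions there: you must first prove that $\Omega_1$ and $\Omega_2$ are \emph{complementary} (i.e. $\partial\Omega_1=\partial\Omega_2$), which is part of statement (1) and which the paper gets from the observation that $\ve_n(x,s)\approx 1$ on a set of positive measure otherwise; and $\ve_n$ being small does not give a $\beta_\infty$-type bound, so your proposed route to UR would not work as stated, but it is also unnecessary.

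The genuine gap is in $(1)\Rightarrow(3)$. First, $u_i(y)=\dist(y,\partial\Omega)\mathbf{1}_{\Omega_i}(y)$ is not subharmonic (in a ball it is superharmonic), so it is not admissible in Theorem \ref{teoACF-elliptic}; one must use Green functions extended by zero. But then both the upper bound $J(x,r(B))\lesssim \theta_1(x,r)^2\theta_2(x,r)^2$ and the lower bound at small scales rest on the Jerison--Kenig estimates (Lemmas \ref{lem:bound on harmonic measure corkscrew point} and \ref{lem:density of harmonic measure unif bounded}) and on the doubling and $A_\infty$ properties of harmonic measure, none of which are available on a general two-sided corkscrew open set: such a set need not satisfy the Harnack chain condition and may even be disconnected. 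Moreover, your claim that the corkscrew condition gives a \emph{uniform pointwise} lower bound on $\Phi(x,0^+)$ is false even for chord-arc domains: the density $\theta_\omega(x,0)$ can degenerate, and what is actually true (and what the paper proves in Lemma \ref{lem2}) is only the integrated bound $\int_{B}\log\bigl(\theta_\omega(x,r(B))/\theta_\omega(x,\rho(x))\bigr)\,d\sigma\lesssim r(B)^n$, obtained from the reverse H\"older inequality for $d\sigma/d\omega$. The paper's resolution of all of this is the entire content of Section \ref{s:corona}: a corona decomposition of $\partial\Omega^+$ into stopping-time regions carrying Lipschitz (hence chord-arc) subdomains $\Omega_Q^\pm\subset\Omega^\pm$, the application of the ACF/harmonic-measure estimates to $\Omega_Q^\pm$, and the transfer back to $\Omega^\pm$ via the monotonicity $\alpha_{\Omega^\pm}(x,t)\le\alpha_{\Omega_Q^\pm}(x,t)$ coming from domain inclusion \eqref{eqVV'}, summed over the trees using the packing condition of Lemma \ref{lempack2}. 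Your proposal contains none of this construction, and without it the ACF argument cannot be closed.
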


Note that, 
in view of Theorem \ref{teofac***}, the implication $(1) \implies (2)$ follows at once from $(1) \implies (3)$. However, we present below a direct proof which, we believe, is of standalone interest.

Let us highlight that in \cite{Ca24}, the first author proved that having quantitative control on the rate of decay of the Carleson $\varepsilon$-function at every point of the boundary of a Jordan domain $\Omega$ gives quantitative information about the regularity of $\partial \Omega$. A little more precisely, she showed that if $\ve(x,r) \lesssim r^\alpha$ for all $x$ in some Jordan curve $\Gamma$, then $\Gamma$ is in fact a $C^{1, c(\alpha)}$ manifold. Theorem \ref{t:main2}, then, clarifies what happens ``in between'' the hypotheses of \cite{JTV21} and \cite{Ca24}.

\subsection{An open question} 
After the results of \cite{JTV21}, \cite{FTV23} and of the current article, a main issue that remains open is that of `higher codimensional analogues'. Of course, to formulate them, one should think of a plausible $\ve$ coefficient. But there is a perhaps more fundamental issue: both \cite{JTV21} and \cite{FTV23} use in a fundamental way compactness arguments which naturally lead to the study of an analytic variety: this approach seems altogether unfeasible in higher codimensions. The methods we present here, however, do not entail such arguments. A way forward in higher codimensions then, is to try to obtain a quantitative statement first, in line to what we present here.

\subsection{Structure of the article}
In Section \ref{s:prelim} we set out some basic notation and definitions which will be used throughout the article. In Sections \ref{s:(1)implies(2)} and \ref{s:(2)implies(1)} we show that $(1) \iff (2)$ in Theorem \ref{t:main2}. In Section \ref{s:CAD} we prove some direct estimates for CADs, aimed at the proof of $(1) \implies (3)$ in Theorem \ref{t:main2}. This proof will be completed in Section \ref{s:corona} via a corona type construction. 

\subsection*{Acknowledgment}
Some of the mathematics of Section \ref{s:(1)implies(2)} was worked on more than a year ago by the second and third named authors together with I. Fleschler. We also thank A. Chang for useful conversation on a first version of this preprint and S. Ford for her help creating the pictures.

\section{Preliminaries}\label{s:prelim}

\subsection{Basic notation}
In the paper, constants denoted by $C$ or $c$ depend just on the dimension unless otherwise stated. As per usual, we will write $a\lesssim b$ if there is $C>0$ such that $a\leq Cb$. We write $a\approx b$ if $a\lesssim b\lesssim a$.
 
 Open balls in $\R^{n+1}$ centered in $x$ with radius $r>0$ are denoted by $B(x,r)$, and closed balls by 
$\bar B(x,r)$. For an open or closed ball $B\subset\R^{n+1}$ with radius $r$, we write $\rad(B)=r$. We use the two notations $S(x,r)\equiv \partial B(x,r)$ for spheres
in $\R^{n+1}$ centered in $x$ with radius $r$, so that $\bS^n=S(0,1)$.
If $A \subset \R^{n+1}$ is a set and $s>0$, we denote by $A(s)$ its $s$-neighbourhood, that is: $A(s)=\{y \in \R^{n+1}\,:\, \dist(y, A) < s\}$.

\subsection{Tangent points}
The notion of tangent points of domains is usually construed when they are complementary. In our case, however, 
it is appropriate to consider a somewhat more general notion involving two disjoint domains. For a point $x\in\R^{n+1}$, a unit vector $u$,
and an aperture parameter $a\in(0,1)$ we consider the two sided cone with axis in the direction of $u$ defined by
$$X_a(x,u)=\bigl\{y\in\R^{n+1}:|(y-x)\cdot u|> a|y-x|\bigr\}.$$
Given disjoint open sets $\Omega_1,\Omega_2\subset\R^{n+1}$ and $x\in\partial\Omega_1\cap\partial\Omega_2$,
we say that $x$ is a tangent (or cone) point for the pair $\Omega_1,\Omega_2$ if $x\in\pom_1\cap\pom_2$ and there exists a unit vector $u$ such that, for all
$a\in(0,1)$, there exists some $r>0$ such that
$$(\partial \Omega_1\cup \partial \Omega_2)\cap X_a(x,u)  \cap B(x,r) =\varnothing,$$
and moreover, one component of $X_a(x,u)\cap B(x,r)$ is contained in $\Omega_1$ and the other in $\Omega_2$.
The hyperplane $L$ orthogonal to $u$ through $x$ is called a tangent hyperplane at $x$.  In case that $\Omega_2=\R^{n+1}\setminus\overline{\Omega_1}$, we say that $x$ is a tangent point for $\Omega_1$.

\begin{figure}[h]
    \centering

\begin{tikzpicture}[scale=1,every node/.style={minimum size=1cm}]


\def\R{4} 

\def\angEl{10} 
\def\angAz{-100} 
\def\angPhiOne{-110} 
\def\angPhiTwo{-45} 
\def\angBeta{30} 


\pgfmathsetmacro\H{\R*cos(\angEl)} 
\LongitudePlane[xzplane]{\angEl}{\angAz}
\LatitudePlane[equator]{\angEl}{0}
\fill[ball color=white!10] (0,0) circle (\R); 

\DrawLatitudeCircle[\R]{0} 


 \def\eps{1.2}
	\coordinate (D) at (-4,{.2*\eps});
	\def\d{0};

	\coordinate (E) at (-2,{-.6*\eps-.5});
	\def\e{30};
	\coordinate (F) at (-2.4,{.4*\eps-.3});
	\def\f{210};
    \coordinate (G0) at (-1.75,{-.15*\eps-.5});
	  \def\gz{91};
    \coordinate (G1) at (-1.04,{-.17*\eps-.5});
 \def\ge{310};


	\coordinate (G) at (-1,{-.2*\eps-.5});
	\def\g{-50};
    
	\coordinate (H) at (0,{0*\eps-.4});
	\def\h{30};
    \coordinate (H1) at (-.46,{-0.25*\eps-.4});
	\def\ha{35};
    \coordinate (H2) at (.44,{-0.24*\eps-.4});
	\def\hz{122};

    
	\coordinate (I) at (.8,{-.1*\eps-.8});
	\def\i{20};
    
	\coordinate (J) at (2.6,{.3*\eps-.4});
	\def\j{10};
	\coordinate (K) at (3.2,{0*\eps-.4});
	\def\k{-90};
\coordinate (K1) at (1.35,{-0.2*\eps-.4});
	\def\ka{35};

    
    \coordinate (L) at (3,{-.6*\eps-.4});
	\def\l{-90};
	\coordinate (M) at (3.88,{-.8*\eps});
	\def\m{15};
    
 \def\addsm{.9}
	\coordinate (T) at (-2,{-.6*\eps+\addsm});
	\def\t{40};
    
	\coordinate (U) at (-2.4,{.4*\eps+\addsm});
	\def\u{-50};
	\coordinate (U1) at (-1.93,{-.25*\eps+\addsm});
	\def\ua{105};
    \coordinate (U2) at (-1.85,{-.24*\eps+\addsm});
	\def\uz{121};


	\coordinate (V) at (0,{0*\eps-.5});
	\def\v{30};
    
	\coordinate (W) at (0,{0*\eps+\addsm});
	\def\w{30};
	\coordinate (W1) at (-0.16,{-0.18*\eps+\addsm});
	\def\wa{80};
    \coordinate (W2) at (0.6,{-0.18*\eps+\addsm});
	\def\wz{143};
    
	\coordinate (X) at (.8,{-.1*\eps+\addsm-.1});
	\def\x{20};
	\coordinate (Y) at (2.6,{.3*\eps+\addsm});
	\def\y{10};
\coordinate (X1) at (.85,{-.1*\eps+\addsm-.1});
	\def\xa{22};
\coordinate (Y1) at (3.23,{-0.4*\eps+\addsm});
	\def\ya{95};

    
	\coordinate (Z) at (3.2,{0*\eps+\addsm});
	\def\z{-90};

    \coordinate (A) at (-2,2);
	\def\a{45};
	\coordinate (B) at (-1.8,2.2);
	\def\b{30};
	\coordinate (C) at (-1.6,2.4);
	\def\c{10};
	\coordinate (N1) at (-1.4,2.3);
	\def\n{0};
	\coordinate (O1) at (-1,2.2);
	\def\o{-100};
	\coordinate (P) at (-1.5,2.1);
	\def\p{200};
    \coordinate (R) at (-1.9,1.8);
	\def\r{190};
    \coordinate (S1) at (-1.2,2.5);
	\def\s{-20};

\filldraw[nearly transparent, fill=frontshade] 
    (M) arc (-13.7:175:4) to
	(D) to [out=\d,in={\e-180}] 
	(E) to [out=\e,in={\f-180}] 
	(F) to [out=\f-180,in={\g-180}] 
	(G) to [out=\g,in={\h-180}] 
	(H) to [out=\h,in={\i-180}] 
	(I) to [out=\i,in={\j-180}] 
	(J) to [out=\j,in={\k-180}] 
	(K) to [out=\k,in={\l-180}] 
	(L) to [out=\l,in={\m-180}] 
    (M) -- cycle;

\filldraw[semitransparent, fill=red] 
   (-1.75,-.68) to [out={\gz}, in={\f-180}]
   (-2.4,.18) to  [out={\f-180}, in={\ge-180}]
   (-1.04,-0.69) to [out={178}, in={3}] (-1.75,-.64);	

\filldraw[semitransparent, fill=red] 
   (-.46,{-0.25*\eps-.4}) to [out={\ha}, in={\h-180}]
   (0,{0*\eps-.4}) to  [out={\h}, in={\hz}]
   (.44,{-0.24*\eps-.4}) to [out={178}, in={3}] (-.46,{-0.25*\eps-.4});	

\filldraw[semitransparent, fill=red] 
   (1.35,{-0.2*\eps-.4}) to [out={\ka}, in={\j-180}]
   (2.6,{.3*\eps-.4}) to  [out={\j}, in={\k-180}]
   (3.2,{0*\eps-.42}) to [out={192}, in={0.5}] (1.35,{-0.2*\eps-.4});	

\filldraw[nearly transparent, fill=red] 
   (-1.94,{-.25*\eps+\addsm}) to [out={\ua}, in={\u-180}]
   (-2.4,{.4*\eps+\addsm}) to  [out={\u}, in={\uz-180}]
   (-1.85,{-.24*\eps+\addsm}) to [out={185}, in={0.5}] (-1.94,{-.25*\eps+\addsm});

\filldraw[nearly transparent, fill=red] 
   (-0.16,{-0.18*\eps+\addsm}) to [out={\wa}, in={\w-180}]
   (0,{0*\eps+\addsm}) to  [out={\w}, in={\wz-180}]
   (0.6,{-0.18*\eps+\addsm}) to [out={180}, in={0}] (-0.16,{-0.18*\eps+\addsm});

\filldraw[nearly transparent, fill=red] 
   (.85,{-.1*\eps+\addsm-.1}) to [out={\xa}, in={\y-180}]
   (2.6,{.3*\eps+\addsm}) to  [out={\y}, in={\z-180}]
   (3.2,{0*\eps+\addsm}) to [out={\z}, in={\ya-180}]
   (3.23,{-0.4*\eps+\addsm}) to [out={171}, in={0}] (.85,{-.1*\eps+\addsm-.1});

    \filldraw[nearly transparent, fill=backshade, draw=backshade] 
    (M) arc (-13.7:175:4) to
	(D) to [out=\d,in={\t-180}] 
	(T) to [out=\t,in={\u-180}] 
	(U) to [out=\u,in={\v-180}] 
	(V) to [out=\v,in={\w-180}] 
	(W) to [out=\w,in={\x-180}] 
	(X) to [out=\x,in={\y-180}] 
	(Y) to [out=\y,in={\z-180}] 
	(Z) to [out=\z,in={\m-180}] 
    (M) -- cycle;

 \draw[very thick,blue] 
	(D) to [out=\d,in={\e-180}] 
	(E) to [out=\e,in={\f-180}] 
	(F) to [out=\f-180,in={\g-180}] 
	(G) to [out=\g,in={\h-180}] 
	(H) to [out=\h,in={\i-180}] 
	(I) to [out=\i,in={\j-180}] 
	(J) to [out=\j,in={\k-180}] 
	(K) to [out=\k,in={\l-180}] 
	(L) to [out=\l,in={\m-180}] 
    (M);
  \draw[very thick,dashed,blue] 
	(D) to [out=\d,in={\t-180}] 
	(T) to [out=\t,in={\u-180}] 
	(U) to [out=\u,in={\v-180}] 
	(V) to [out=\v,in={\w-180}] 
	(W) to [out=\w,in={\x-180}] 
	(X) to [out=\x,in={\y-180}] 
	(Y) to [out=\y,in={\z-180}] 
	(Z) to [out=\z,in={\m-180}] 
    (M);
\filldraw[ semitransparent, fill=red] 
	(A) to [out=\a,in={\b-180}] 
	(B) to [out=\b,in={\c-180}] 
	(C) to [out=\c,in={\n-180}] 
	(N1) to [out=\n,in={\s-180}] 
    (S1) to [out=\s,in={\o-180}] 
	(O1) to [out=\o,in={\p-180}] 
	(P) to [out=\p,in={\r-180}] 
	(R) to [out=\r,in={\a-180}] (A);

\draw[very thick,blue] 
     (A) to [out=\a,in={\b-180}] 
	(B) to [out=\b,in={\c-180}] 
	(C) to [out=\c,in={\n-180}] 
	(N1) to [out=\n,in={\s-180}] 
    (S1) to [out=\s,in={\o-180}] 
	(O1) to [out=\o,in={\p-180}] 
	(P) to [out=\p,in={\r-180}] 
	(R) to [out=\r,in={\a-180}] (A);

\coordinate[mark coordinate] (O) at (0,0);
\coordinate (N) at (0,{\H+.05});
\coordinate (S) at (0,{-\H-.05});


\node at (0.2,.2) {$x$};

\filldraw[ semitransparent, fill=red] 
(-4,0) arc (180:219: 4 cm and .7 cm) to[out=125, in=\d] (D) --cycle;
\end{tikzpicture}
    \caption[The region $(\partial B(x,r)\cap H^+)\setminus \Omega_1$ is denoted in red.]{The region $(\partial B(x,r)\cap H^+)\setminus \Omega_1$ is denoted in red.
    \footnotemark}
    \label{fig:function on a shell}
\end{figure}
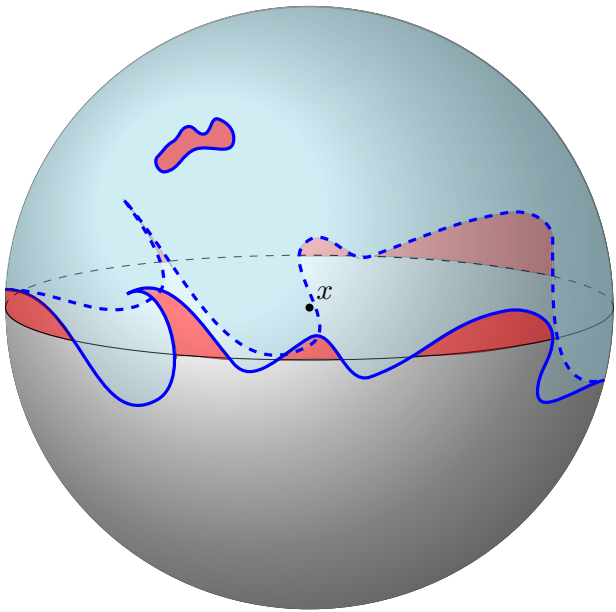

\subsection{Square functions}
In this subsection we re-define precisely $\ve_n$ and $a$ (for future reference), the geometric coefficients which are the subjects of our study.

    Given two arbitrary disjoint Borel sets $\Omega_1$, $\Omega_2$ $\subset \mathbb{R}^{n+1}$, and $x\in \mathbb{R}^{n+1}$, $r>0$, define 
    \begin{equation*}
        \varepsilon_n(x,r)=\frac{1}{r^n} \inf_{H^+}\mathcal{H}^n\left(((\partial B(x,r)\cap H^+)\setminus \Omega_1) \cup ((\partial B(x,r)\cap H^-)\setminus \Omega_2)\right), 
    \end{equation*}
    where the infimum is taken over all open affine half-spaces $H^+$ such that $x\in \partial H^+$ and $H^-=\mathbb{R}^{n+1}\setminus \overline{H^+}$. See Figure \ref{fig:function on a shell}.\\
\footnotetext{All of the pictures in this article were created using code based on \cite{trzeciak2008stereographic,miani2009spherical}.}
Let us now look at $a$. To do so, we need first a key concept, that of characteristic constant.
Given an open set $U\subset \mathbb{S}^{n}$, the \emph{characteristic constant} $\alpha=\alpha_U$ is the number which satisfies 
    \begin{equation*}
        \lambda_1(U)=\alpha(n-1+\alpha), 
    \end{equation*}
    where $\lambda_1(U)$ is the first Dirichlet-Laplacian eigenvalue of $U$. It follows from \cite{FH76} that for $U,V\subset \mathbb{S}^n$ open and disjoint, 
    \begin{equation*}
        \alpha_{U}+\alpha_V-2\geq 0, 
    \end{equation*}
    where $\alpha_{U}+\alpha_V-2=0$ if and only if $U,V$ are complementary half-spheres on $\mathbb{S}^n$.\\

   Now, suppose $\Omega_1,\Omega_2\subset \mathbb{R}^{n+1}$ are open and disjoint. For $x\in \mathbb{R}^{n+1}$ and $r>0$, let $V_i=\Omega_i\cap \partial B(x,r)$, for $i=1,2$. Denote by $V_i(x,r)$ the rescaled domains on $\mathbb{S}^n$,
   \begin{equation*}
       V_i(x,r)=\left\{\frac{y-x}{r}: y\in \partial B(x,r)\cap \Omega_i\right\},
   \end{equation*}
   and note that $V_1(x,r)\cap V_2(x,r)=\varnothing$. Let $\alpha_i(x,r):=\alpha_{V_i(x,r)}$. Define 
   \begin{equation}\label{e:def-a-function}
       a(x,r):=\min\{1,\, \alpha_1(x,r)+\alpha_2(x,r)-2\}.
   \end{equation}

We remark that if $V_i=\varnothing$ for either $i=1$ or $i=2$, then $a(x,r)=1$. Indeed, this is the point of using a minimum. It might happen otherwise that $\alpha(x,r) \to \infty$.

\subsection{Ahlfors-David regularity, UR, Carleson measures}

    A Borel measure on $\mathbb{R}^{n+1}$ is said to be \emph{Ahlfors-David $n$-regular} if there exists some constant $C>0$ such that 
    \begin{equation}\label{e:def-ADR}
        C^{-1}r^n\leq \mu(B(x,r))\leq Cr^n\qquad \text{for all}\, x\in \spt\, \mu, \, r>0.
    \end{equation}

\vspace{0.5cm}
    A measure $\mu$ is said to be \emph{uniformly $n$-rectifiable} if it is $n$-ADR and there exist constant $\theta,M>0$ so that the following holds for each $x \in \spt(\mu)$ and $r>0$. There is a Lipschitz mapping $g$ from the $n$-dimensional ball $B_n(0,r)\subset \mathbb{R}^n$ to $\mathbb{R}^d$ such that $g$ has Lipschitz norm bounded by $M$ and 
    \begin{equation}\label{e:def-UR}
        \mu(B(x,r)\cap g(B_n(0,r)))\geq \theta r^n.\\
    \end{equation}

    A \emph{Carleson measure} on $E\times (0,\infty)$ is a measure $\mu$ for which there exists a constant $C>0$ such that for every $x\in E$ and $r>0$ we have 
    \begin{equation}\label{e:def-Carleson}
\int_0^r\int_{B(x,r)}d\mu(y,t)\leq Cr^n.
    \end{equation}
   
\vv
\subsection{Types of domain}
Let $\Omega\subset \mathbb{R}^{n+1}$ be an open set. $\Omega$ satisfies the \textit{$c$-corkscrew condition} if there exists some $c>0$ such that for all $x\in \partial \Omega$ and $r\in (0,\mathrm{diam}(\partial\Omega))$ there exists some ball $B\subset \Omega \cap \overline{B(x,r)}$ with $r(B)\geq cr$. 

Next, we say that $\Omega$ satisfies the \textit{two-sided $c$-corkscrew condition} (but we will usually avoid explicitly mentioning $c$) if both $\Omega$ and its complement satisfy the $c$-corkscrew condition. Thus
$\Omega$ is a \textit{two-sided corkscrew open set} if it is an open set that satisfies the two-sided corkscrew condition.

\begin{remark}
    In general, if two disjoint open subsets are not complementary, then we will write $\Omega_1,\Omega_2$. If, on the other hand, they are complementary, we will denote by $\Omega^+$ and $\Omega^-$, as customary.
\end{remark}

\begin{definition}[Harnack chain condition]\label{def: harnack chain}
    A set $\Omega\subset \mathbb{R}^{n+1}$ satisfies the \emph{Harnack chain condition} if there is some uniform constant $C>0$ such that for every $\rho>0$, $\Lambda\geq 1$, and for every pair of points $X,X^{\prime}\in \Omega$ with $d(X,\partial \Omega), d(X^{\prime}, \partial \Omega)>\rho$ and $|X-X^{\prime}|<\Lambda \rho$, there is a chain of open balls $B_1,\hdots, B_N\subset \Omega$, $N\leq C(\Lambda)$ with $X\in B_1$ and $X^{\prime}\in B_N$, $B_k\cap B_{k+1}\neq \varnothing$ and 
    \begin{equation*}
        C^{-1}\mathrm{diam}(B_k)\leq d(B_k,\partial \Omega)\leq C\mathrm{diam}(B_k).
    \end{equation*}
    The chain of balls is called a \emph{Harnack chain}.
\end{definition}

\begin{definition}[NTA domain]\label{def: NTA}
    A domain $\Omega$ is a \emph{non-tangentially accessible (NTA) domain} if $\Omega$ satisfies both the corkscrew and Harnack chain conditions, and if $\mathbb{R}^{n+1}\setminus \overline{\Omega}$ also satisfies the corkscrew condition.
\end{definition}
\begin{definition}[CAD]\label{def: cad}
    A domain $\Omega$ is a \emph{chord-arc domain (CAD)} if it is an NTA domain with $n$-ADR boundary.
\end{definition}

\subsection{Dyadic lattices}

Given an $n$-AD-regular measure $\mu$ in $\R^{n+1}$, we consider 
the dyadic lattice of ``cubes'' built by David and Semmes in \cite[Chapter 3 of Part I]{DS93}. The properties satisfied by $\DD_\mu$ are the following. 
Assume first, for simplicity, that $\diam(\supp\mu)=\infty$). Then for each $j\in\Z$ there exists a family $\DD_{\mu,j}$ of Borel subsets of $\supp\mu$ (the dyadic cubes of the $j$-th generation) such that:
\begin{itemize}
\item[$(a)$] each $\DD_{\mu,j}$ is a partition of $\supp\mu$, i.e.\ $\supp\mu=\bigcup_{Q\in \DD_{\mu,j}} Q$ and $Q\cap Q'=\varnothing$ whenever $Q,Q'\in\DD_{\mu,j}$ and
$Q\neq Q'$;
\item[$(b)$] if $Q\in\DD_{\mu,j}$ and $Q'\in\DD_{\mu,k}$ with $k\leq j$, then either $Q\subset Q'$ or $Q\cap Q'=\varnothing$;
\item[$(c)$] for all $j\in\Z$ and $Q\in\DD_{\mu,j}$, we have $2^{-j}\lesssim\diam(Q)\leq2^{-j}$ and $\mu(Q)\approx 2^{-jn}$;
\item[$(d)$] there exists $C>0$ such that, for all $j\in\Z$, $Q\in\DD_{\mu,j}$, and $0<\tau<1$,
\begin{equation}\label{small boundary condition}
\begin{split}
\mu\big(\{x\in Q:\, &\dist(x,\supp\mu\setminus Q)\leq\tau2^{-j}\}\big)\\&+\mu\big(\{x\in \supp\mu\setminus Q:\, \dist(x,Q)\leq\tau2^{-j}\}\big)\leq C\tau^{1/C}2^{-jn}.
\end{split}
\end{equation}
This property is usually called the {\em small boundaries condition}.
From (\ref{small boundary condition}), it follows that there is a point $x_Q\in Q$ (the center of $Q$) such that $\dist(x_Q,\supp\mu\setminus Q)\gtrsim 2^{-j}$ (see \cite[Lemma 3.5 of Part I]{DS93}).
\end{itemize}
We set $\DD_\mu:=\bigcup_{j\in\Z}\DD_{\mu,j}$. 

In case that $\diam(\supp\mu)<\infty$, the families $\DD_{\mu,j}$ are only defined for $j\geq j_0$, with
$2^{-j_0}\approx \diam(\supp\mu)$, and the same properties above hold for $\DD_\mu:=\bigcup_{j\geq j_0}\DD_{\mu,j}$.

Given a cube $Q\in\DD_{\mu,j}$, we say that its side length is $2^{-j}$, and we denote it by $\ell(Q)$. Notice that $\diam(Q)\leq\ell(Q)$. 
We also denote 
\begin{equation}\label{defbq}
B(Q):=B(x_Q,c_1\ell(Q)),\qquad B_Q = B(x_Q,\ell(Q)),
\end{equation}
where $c_1>0$ is some fix constant so that $B(Q)\cap\supp\mu\subset Q$, for all $Q\in\DD_\mu$. Clearly, we have $Q\subset B_Q$.
 We denote by $\Ch(Q)$ (the children of $Q$) the family of the cubes  from $\DD_{\mu,j+1}$ which are contained in $Q$.

For $\lambda>1$, we write
$$\lambda Q = \bigl\{x\in \supp\mu:\, \dist(x,Q)\leq (\lambda-1)\,\ell(Q)\bigr\}.$$

The side length of a ``true cube'' $P\subset\R^{n+1}$ is also denoted by $\ell(P)$. On the other hand, given a ball $B\subset\R^{n+1}$, its radius is denoted by $r(B)$. For $\lambda>0$, the ball $\lambda B$ is the ball concentric with $B$ with radius $\lambda\,r(B)$.

\subsection{The other geometric coefficient: $\beta$}
Given $E\subset\R^{n+1}$, a ball $B$, and a hyperplane $L$, we denote
$$b\beta_{E}(B,L) =  \sup_{y\in E\cap B} \frac{\dist(y,L)}{r(B)} + 
\sup_{y\in L\cap B}\!\! \frac{\dist(x,E)}{r(B)} .$$
We set
$$b\beta_{E}(B,L) = \inf_L b\beta_{E}(x,r,L),$$
where the infimum is taken over all hyperplanes $L\subset\R^{n+1}$. For a $B=B(x,r)$, we also write
$$b\beta_{E}(x,r,L)=b\beta_{E}(B,L),\qquad b\beta_{E}(x,r)=b\beta_{E}(B).$$

For $p\geq1$, a measure $\mu$, a ball $B$, and a hyperplane $L$, we set
$$\beta_{\mu,p}(B,L) = \left(\frac1{r(B)^n}\int_B \left(\frac{\dist(x,L)}{r(B)}\right)^p\,d\mu(x)\right)^{1/p}.$$
We define
$$\beta_{\mu,p}(B) = \inf_L \beta_{\mu,p}(B,L),$$
where the infimum is taken over all hyperplanes $L$.
For $B=B(x,r)$, we also write
\begin{equation}\label{eqdefbeta}
\beta_{\mu,p}(x,r,L) = \beta_{\mu,p}(B,L),\qquad \beta_{\mu,p}(x,r) = \beta_{\mu,p}(B).
\end{equation}
For $E=\supp\mu$, we may also write $\beta_{E,p}$ instead of $\beta_{\mu,p}$.
For a given cube $Q\in\DD_\mu$, we define:
\begin{align*}
\begin{array}{ll}
\beta_{\mu,p}(Q,L)  = \beta_{\mu,p}(B_Q,L)
,&\quad \beta_{\mu,p}(\lambda Q,L)= \beta_{\mu,p}(\lambda B_Q,L),\\
\quad \,\beta_{\mu,p}(Q) = \beta_{\mu,p}(B_Q),
& \quad \quad\,\beta_{\mu,p}(\lambda Q)= \beta_{\mu,p}(\lambda B_Q).
\end{array}
\end{align*}
Also, we define similarly
$$b\beta_\mu(Q,L),\quad b\beta_\mu(\lambda Q,L),\quad b\beta_\mu(Q),\quad b\beta_\mu(\lambda Q),$$
by identifying these coefficients with the analogous ones in terms of $B_Q$.
These coefficients are defined in the same way as $b\beta_{\supp\mu}(B,L)$ and $b\beta_{\supp\mu}(B)$,
replacing again $B$ by $Q\in\DD_\mu$ or $\lambda Q$.

The coefficients $b\beta_E$ and $\beta_{\mu,p}$ above measure the goodness of the approximation of $E$ and
$\supp\mu$, respectively, in a ball $B$ by a hyperplane. They play an important role in the theory of
uniform $n$-rectifiability. See \cite{DS91}.

\subsection{The ACF monotonicity formula}
Recall that the Alt-Caffarelli-Friedman (ACF) monotonicity formula asserts the following:
\begin{theorem} \label{teoACF-elliptic}  Let $x \in  \R^{n+1}$ and $R>0$. Let $u_1,u_2\in
		W^{1,2}(B(x,R))\cap C(B(x,R))$ be nonnegative subharmonic functions such that $u_1(x)=u_2(x)=0$ and $u_1\cdot u_2\equiv 0$. 
		Set
		\begin{equation}\label{eqACF2}
			J(x,r) = \left(\frac{1}{r^{2}} \int_{B(x,r)} \frac{|\nabla u_1(y)|^{2}}{|y-x|^{n-1}}dy\right)\cdot \left(\frac{1}{r^{2}} \int_{B(x,r)} \frac{|\nabla u_2(y)|^{2}}{|y-x|^{n-1}}dy\right)
		\end{equation}
		Then $J(x,r)$ is an absolutely continuous function of $r\in (0,R)$ and
			\begin{equation}\label{eqprec1}
			\frac{\partial_rJ(x,r)}{J(x,r)}\geq \frac2r\bigl(\alpha_1 + \alpha_2  - 2 \bigr). 
		\end{equation}
		where $\alpha_i$ is the characteristic constant of the open subset  $\Omega_i\subset\bS^n$ given by  $$ \Omega_i=\bigl\{r^{-1}(y-x): y\in\partial B(x,r),\,u_i(y)>0\bigr\}.$$
		Further, for $r\in (0,R/2)$ and $i=1,2$, we have 
		\begin{equation}\label{eqaux*}
		\frac{1}{r^{2}} \int_{B(x,r)} \frac{|\nabla u_i(y)|^{2}}{|y-x|^{n-1}}dy\lesssim \frac1{r^{n+1}}\|\nabla u_i\|_{L^2(B(x,2r))}^2.
		\end{equation}	
	\end{theorem}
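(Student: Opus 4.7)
The plan is to reduce, by translation and the factorization $\log J = \log I_1 + \log I_2$ where $I_i(r) := \frac{1}{r^2}\int_{B(0,r)}|\nabla u_i|^2/|y|^{n-1}\,dy$, to proving separately for each $i$ the one-sided bound $I_i'(r)/I_i(r) \geq 2(\alpha_i - 1)/r$, and then to sum. Absolute continuity of $J$ is immediate from the polar coordinate representation. A direct polar differentiation gives
$$\frac{I_i'(r)}{I_i(r)} + \frac{2}{r} = \frac{r^{-(n-1)}\int_{\partial B(0,r)}|\nabla u_i|^2\,dS}{\int_{B(0,r)}|\nabla u_i|^2/|y|^{n-1}\,dy},$$
so the target becomes that this ratio is at least $2\alpha_i/r$.

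To achieve this I will bound the denominator by a boundary integral: applying the divergence theorem on $B(0,r)\setminus B(0,\varepsilon)$ to $u_i\nabla u_i/|y|^{n-1}$, using $u_i\Delta u_i \geq 0$, the identity $u_i\nabla u_i = \tfrac12\nabla(u_i^2)$, and letting $\varepsilon\to 0$ (the origin contribution vanishes because $u_i(0) = 0$ while $-\Delta|y|^{1-n}$ is a multiple of $\delta_0$), I obtain
$$\int_{B(0,r)}\frac{|\nabla u_i|^2}{|y|^{n-1}}\,dy \leq \frac{1}{r^{n-1}}\int_{\partial B(0,r)} u_i\partial_\nu u_i\,dS + \frac{n-1}{2r^n}\int_{\partial B(0,r)} u_i^2\,dS.$$
Setting $\widetilde u_i(\omega) := u_i(r\omega)$ on $\bS^n$ and splitting $|\nabla u_i|^2 = |\partial_r u_i|^2 + r^{-2}|\nabla_{\bS^n}\widetilde u_i|^2$, the target then reduces by algebraic manipulation to the purely spherical inequality
$$\int_{\bS^n}(r\partial_r u_i - \alpha_i \widetilde u_i)^2 \,d\omega + \int_{\bS^n}|\nabla_{\bS^n}\widetilde u_i|^2\,d\omega \geq \alpha_i(\alpha_i + n - 1)\int_{\bS^n}\widetilde u_i^2\,d\omega.$$
The first summand is manifestly nonnegative, and since $\widetilde u_i$ is continuous on $\bS^n$ and vanishes outside $V_i(0,r)$ we have $\widetilde u_i \in W^{1,2}_0(V_i(0,r))$, so the Rayleigh quotient characterization of $\lambda_{V_i(0,r)}$, together with the identity $\lambda_{V_i(0,r)} = \alpha_i(\alpha_i + n - 1)$, closes the spherical estimate. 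I expect the main obstacle to be precisely this algebraic bookkeeping: one must arrange the computation so that the coefficient of $\int \widetilde u_i^2$ on the right comes out \emph{exactly} as $\alpha_i(\alpha_i + n - 1)$, and this constraint is what forces the particular normalization $1/r^2$ in the definition of $I_i$.

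For the auxiliary bound \eqref{eqaux*}, I would integrate the boundary identity above in $\rho \in [r, 2r]$ to turn the sphere integrals into volume integrals over the annular shell $r \leq |y| \leq 2r$, apply Cauchy-Schwarz to the cross term $u_i \partial_\nu u_i$, and close the estimate via Caccioppoli's inequality together with the subharmonicity of $u_i^2$ (which ensures that the ball average of $u_i^2$ is monotone non-decreasing in radius) to translate all $u_i$-norms into $\nabla u_i$-norms at the target scale.
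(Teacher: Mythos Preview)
The paper does not prove this theorem: it is stated in the preliminaries as the classical Alt--Caffarelli--Friedman monotonicity formula and invoked later as a black box. Your proposal is precisely the standard proof (as in the original ACF paper or Caffarelli--Salsa's book): factor $J=I_1I_2$, differentiate each $I_i$ in polar coordinates, bound the solid integral by the boundary identity coming from $\Delta|y|^{1-n}=-c_n\delta_0$ and $u_i\Delta u_i\ge 0$, and reduce to the spherical Rayleigh--quotient inequality $\int|\nabla_{\bS^n}\widetilde u_i|^2\ge \alpha_i(\alpha_i+n-1)\int\widetilde u_i^2$. The algebra and the completion of the square are carried out correctly, and the identification $\lambda_{V_i}=\alpha_i(\alpha_i+n-1)$ closes the argument as you say.

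One small caution on \eqref{eqaux*}: after integrating the boundary identity over $\rho\in[r,2r]$ you are left with terms of size $r^{-n}\|u_i\|_{L^2(B_{2r})}\|\nabla u_i\|_{L^2(B_{2r})}$ and $r^{-(n+1)}\|u_i\|_{L^2(B_{2r})}^2$, and to ``translate all $u_i$-norms into $\nabla u_i$-norms'' you need a Poincar\'e inequality $\|u_i\|_{L^2(B_{2r})}\lesssim r\|\nabla u_i\|_{L^2(B_{2r})}$. Caccioppoli goes the wrong way for this; what actually drives it is that $u_i$ vanishes on the support of $u_{3-i}$, and in every application in the paper (Green's functions of corkscrew domains) that zero set contains a ball of radius $\approx r$, which is exactly what Poincar\'e needs. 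So the step is fine in the intended setting, but the justification is Poincar\'e via the disjoint-support hypothesis, not Caccioppoli.
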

\vv

\section{From square function estimates to corkscrews}\label{s:(2)implies(1)}

In this section we prove the implication $(2) \implies (1)$, which, together with Theorem \ref{teofac***} also immediately gives that $(3) \implies (1)$. More precisely, our aim here will be to demonstrate the following proposition. 

\begin{proposition}\label{p:ven-carleson-corkscrew}
    Let $\Omega_i$, $i=1,2$ be two disjoint open subsets of $\R^{n+1}$. Suppose that $\mu$ is an $n$-ADR measure with $\spt(\mu) = \partial \Omega_1 \cup \partial \Omega_2$. If there exists a constant $C_1$ so that for each ball $B$ centered on $\spt(\mu)$ it holds
    \begin{equation}\label{e:ve-n-Carleson}
        \int_B \int_0^{r(B)} \ve_n(x,r)^2 \, \frac{dr}{r} \, d\mu(x) \leq C_1 r(B)^n,
    \end{equation}
    then $\Omega_i$, $i=1,2$ are complementary two-sided corkscrew open sets, and in particular $\mu$ is uniformly $n$-rectifiable. 
\end{proposition}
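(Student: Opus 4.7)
The plan is to reduce the proposition to two facts: (i) $\R^{n+1}\setminus \spt\mu = \Omega_1\cup\Omega_2$ (complementarity), and (ii) every ball $B_0 = B(x_0, r_0)$ with $x_0\in\spt\mu$ contains balls of radius $\gtrsim r_0$ in both $B_0\cap\Omega_1$ and $B_0\cap\Omega_2$ (two-sided corkscrews). Given (i)--(ii), the uniform $n$-rectifiability of $\mu$ will follow from the standard results linking two-sided corkscrew plus $n$-ADR boundary to UR. The main work is therefore to produce corkscrews from the Carleson estimate \eqref{e:ve-n-Carleson}.

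\textbf{Extracting a good scale.} For a small parameter $\eta > 0$ to be chosen, I would apply Chebyshev to \eqref{e:ve-n-Carleson} on the box $(B(x_0, r_0/2)\cap\spt\mu)\times[cr_0, r_0/2]$: the $\mu\otimes\frac{dr}{r}$-mass of pairs with $\ve_n(x,r) > \eta$ is at most $C_1 r_0^n/\eta^2$, while the total mass of the box is $\approx r_0^n\log(1/c)$. Choosing $c = c(\eta, C_1)$ small, a Fubini-pigeonhole step will produce a point $x^*\in B(x_0, r_0/2)\cap\spt\mu$ and a subset $G\subset[cr_0, r_0/2]$ of nearly full $\frac{dr}{r}$-measure on which $\ve_n(x^*, r)\leq \eta$.

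\textbf{Spherical-to-solid flatness and corkscrews.} For each $r\in G$, smallness of $\ve_n(x^*,r)$ yields an approximating hyperplane $H_r\ni x^*$ whose positive half-sphere is mostly inside $\Omega_1$ and negative half-sphere mostly inside $\Omega_2$, with spherical error $\leq \eta r^n$. A stability/compactness argument should show that all $H_r$ with $r\in G$ sit within an $\eta$-controlled rotation of a single hyperplane $H$, with coherent side labeling. Integrating in polar coordinates over $r\in G$ will then upgrade spherical to solid flatness:
\[
|B(x^*, r_0/2)\cap H^+\setminus\Omega_1|+|B(x^*, r_0/2)\cap H^-\setminus\Omega_2| \leq \eta' r_0^{n+1},
\]
with $\eta'\to 0$ as $\eta,c\to 0$. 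To extract a corkscrew from this, I would argue that if $\Omega_1\cap B(x^*, r_0/2)\cap H^+$ had inradius $<\kappa r_0$, then every point of this open set would lie within $\kappa r_0$ of $\partial\Omega_1\cup\partial H^+\cup\partial B(x^*, r_0/2)$; the Lebesgue volume of this tubular neighborhood is $\lesssim \kappa r_0^{n+1}$ thanks to the $n$-ADR bound on $\partial\Omega_1\subset\spt\mu$, which contradicts the near-full volume of $\Omega_1$ inside the half-ball once $\kappa$ and $\eta'$ are small enough. A symmetric argument yields a corkscrew for $\Omega_2$.

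\textbf{Complementarity and main obstacle.} For complementarity, suppose for contradiction there is $y\in\R^{n+1}\setminus(\overline{\Omega_1}\cup\overline{\Omega_2})$. One checks directly that $\partial(\overline{\Omega_1}\cup\overline{\Omega_2})\subset \spt\mu$, so $\dist(y, \spt\mu) = \dist(y, \overline{\Omega_1}\cup\overline{\Omega_2}) \eqdef \delta_y$ and $B(y,\delta_y)\cap(\Omega_1\cup\Omega_2) = \emptyset$. Applying the construction of the previous paragraph at the ball $B(x, 100\delta_y)$ centered at the closest point $x\in\spt\mu$ to $y$ yields $|B(x, 50\delta_y)\setminus(\Omega_1\cup\Omega_2)|\leq \eta'(100\delta_y)^{n+1}$; but this set contains $B(y, \delta_y/2)$ of volume $\gtrsim \delta_y^{n+1}$, a contradiction once $\eta$ is small enough. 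The main obstacle I foresee is the spherical-to-solid upgrade: a single small value $\ve_n(x^*, r)$ controls only a codimension-$1$ sphere and carries no interior information on its own, so the Carleson-across-scales input together with rigidity of the near-optimizing hyperplane is essential, and making this step quantitatively precise is the hardest part of the proof.
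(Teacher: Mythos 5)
Your outline captures two correct ingredients that the paper also uses: extracting a scale on which the $\ve_n$ square-integral is small (the paper picks a single dyadic annulus via a pigeonhole on $\sum_k\int_{2^{-k-1}R}^{2^{-k}R}\ve_n(x,r)^2\,dr/r\leq C_1$), and the tubular-neighborhood/Vitali argument showing that the $s$-neighborhood of $\spt\mu$ has small $(n+1)$-volume inside an annulus by ADR, so most of the annulus is far from $\spt\mu$. The concluding step — a point of $\Omega_i$ far from $\spt\mu$ is the center of a corkscrew ball — is also the paper's.

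However, the step you flag yourself as the main obstacle is indeed a genuine gap, and it is an avoidable one. You propose to show that the optimizing hyperplanes $H_r$ for $r$ in a good scale range all line up with a single $H$, and then integrate in polar coordinates to deduce solid half-space flatness. No such coherence is available from a Carleson bound alone: for two different radii $r,r'$ the sets $\Omega_1\cap S(x^*,r)$ and $\Omega_1\cap S(x^*,r')$ are a priori unrelated, so the near-optimal hyperplanes can rotate arbitrarily as $r$ varies, and nothing in \eqref{e:ve-n-Carleson} prevents this. The paper circumvents the issue entirely with a hyperplane-free consequence of $\ve_n$: for \emph{every} $t$,
\begin{equation*}
\Bigl|\,\HH^n\bigl(\Omega_i\cap S(x,t)\bigr)-\tfrac12\,\HH^n\bigl(S(x,t)\bigr)\Bigr|\;\leq\; t^n\,\ve_n(x,t),
\end{equation*}
since the optimizing half-sphere has measure exactly $\tfrac12\HH^n(S(x,t))$. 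Integrating this over $t$ in one dyadic annulus $A_0$ where $\int\ve_n(x,t)^2\,dt/t$ is small (Cauchy–Schwarz) directly gives $\HH^{n+1}(A_0\cap\Omega_i)\geq\tfrac14\HH^{n+1}(A_0)$ for both $i=1,2$, with no hyperplane selection, no polar-coordinate bookkeeping, and no direction-coherence lemma. This is then fed into the Vitali/ADR step you described. You should replace the ``coherent hyperplane + solid flatness in $H^+$'' step with this annular volume estimate; as written, that step does not go through.

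A secondary remark: your complementarity argument routes through the solid-flatness estimate you were trying to prove. The paper's is shorter and uses $\ve_n$ directly: if $x\in\partial\Omega_1\setminus\partial\Omega_2$, then a small ball $B(x,r)$ misses $\overline{\Omega_2}$, so one of the two half-spheres in the definition of $\ve_n(x,s)$ is entirely outside $\Omega_2$ for all $0<s\leq r$, forcing $\ve_n(x,s)\approx 1$; by ADR this persists on a positive-$\mu$-measure set near $x$, contradicting \eqref{e:ve-n-Carleson} applied to $B(x,r)$.
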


\begin{proof}[Proof of Proposition \ref{p:ven-carleson-corkscrew}]
We first show that $\mathbb{R}^{n+1}\setminus \overline{\Omega_1}=\Omega_2$ by showing that $\partial\Omega_1=\partial \Omega_2$. If not, then there exists a point $x\in \partial \Omega_1\setminus \partial \Omega_2$ with $d(x,\partial \Omega_2)>0$. Let $r$ be such that $0<r<\frac{d(x,\partial \Omega_2)}{2}$. Then, $B(x,r)\subset \mathbb{R}^{n+1}\setminus\overline{\Omega_2}$, and thus 
\begin{equation}\label{e: eps big if bdry not the same}
    \varepsilon_n(x,s)\approx 1 \qquad \text{for}\qquad 0<s\leq r.
\end{equation}
In fact, since $\mu$ is $n$-ADR, \eqref{e: eps big if bdry not the same} holds for a positive $\mu$-measure subset of $\partial \Omega_1 \cap B(x,r/2)$, contradicting \eqref{e:ve-n-Carleson}. Thus, $\mathbb{R}^{n+1}\setminus \overline{\Omega_1}=\Omega_2$. Since we have established that $\Omega_1$ and $\Omega_2$ are complementary domains, we change our notation slightly: let us put $\Omega=\Omega_1$ and $\R^{n+1}\setminus \overline\Omega = \Omega_2$.

   Fix $x_0\in \spt(\mu)$ and $0<R<\mathrm{diam}\,\spt(\mu)$. By \eqref{e:ve-n-Carleson} applied to $B(x_0, R)$, there exists some point $x\in B(x_0, R)\cap\supp(\mu)$ such that
    $$\sum_{k\geq0} \,\int_{2^{-k-1}R}^{2^{-k}R} \varepsilon_n(x,r)^2\frac{dr}{r} =
    \int_0^R \varepsilon_n(x,r)^2\frac{dr}{r}\leq C_1.$$
 Let $\delta>0$ be a constant to be chosen later, with $\delta=C_1/m$ for some large natural number $m$. 
 By the preceding estimate, there exists some $K\geq m$ such that
 \begin{equation}\label{e: epsilon int with delta}
 \int_{2^{-K-1}R}^{2^{-K}R} \varepsilon_n(x,r)^2\frac{dr}{r}\leq \frac{C_1}m = \delta.
\end{equation}

    Let $B_0:=B(x,2^{-K}R)$ and $A_0:=A(x,2^{-K-1}R,2^{-K}R)$. We first claim that
    \begin{equation}\label{e: B contains a lot of mass on both sides}
        \mathcal{H}^{n+1}(A_0\cap \Omega)\geq \frac{1}{4}\mathcal{H}^{n+1}(A_0) \,\, \mbox{ and } \,\,  \mathcal{H}^{n+1}(A_0\cap \Omega^c)\geq \frac{1}{4}\mathcal{H}^{n+1}(A_0)
    \end{equation}
        To see this, recall that we denote by $H^+$ an infimizing half-space in $\varepsilon_n(x,t)$, and by $H^-=\R^{n+1}\setminus \overline{H^+}$ its complementary half space. Put $H^\pm_{x,t}:=H^{\pm} \cap \partial B(x,t)$.
           We apply \eqref{e: epsilon int with delta}
and, for $\delta>0$ sufficiently small, we compute
    \begin{align*}
        \left|\mathcal{H}^{n+1}(\Omega\cap A_0)-\frac{1}{2}\mathcal{H}^{n+1}(A_0) \right|
       & = \left| \int_{2^{-K-1}R}^{2^{-K}R}\left(\mathcal{H}^{n}(\Omega\cap S(x,t))-\frac{1}{2}\mathcal{H}^{n}(S(x,t))\right)dt\right|\\
       &\leq\left| \int_{2^{-K-1}R}^{2^{-K}R}\left(\mathcal{H}^{n}(H^+_{x,t}\setminus \Omega^c)+\mathcal{H}^{n}(H^-_{x,t}\setminus\Omega)\right)dt\right|\\
       &\leq \left| \int_{2^{-K-1}R}^{2^{-K}R}t^n \varepsilon_n(x,t)dt\right|\\
       &\leq \left(\int_{2^{-K-1}R}^{2^{-K}R}\frac{\varepsilon_n(x,t)^2}{t}dt \right)^{1/2}\left(\int_0^{2^{-K}R} t^{2n+1}dt\right)^{1/2}\\
        &\leq \frac{1}{4}\mathcal{H}^{n+1}(A_0). 
        \end{align*}
The second estimate in \rf{e: B contains a lot of mass on both sides} is proven analogously.

    For $\tau\in (0,1/10)$ and $s=\tau 2^{-K-1}R$, consider the family of balls 
    \begin{equation}
        \cF := \{ B(y,s):y\in 2B_0\cap \supp(\mu) \}.
    \end{equation}
    Notice that all the balls in $\cF$ are contained in $3B_0$. By Vitali's covering theorem, there is a disjoint subfamily $\cF_0\subset\cF$ such that
    $$(\spt(\mu))(s)\cap A_0\subset \bigcup_{B\in \cF} B \subset \bigcup_{B\in \cF_0} 5B.$$
    Then, using the AD-regularity of $\mu$, we deduce
    \begin{align*}
    \HH^{n+1}((\spt(\mu))(s)\cap A_0) & \leq \sum_{B\in \cF_0} \HH^{n+1}(5B) \lesssim s\sum_{B\in \cF_0} r(B)^n\lesssim s\sum_{B\in \cF_0}
    \mu(B) \\
    & \lesssim s\,\mu(3B_0) \lesssim \tau (2^{-K-1}R)^{n+1} \approx \tau\,\HH^{n+1}(A_0).
    \end{align*}
     Thus, for $\tau>0$ sufficiently small, we obtain
    \begin{equation*}
        \HH^{n+1} \left( A_0 \setminus (\spt(\mu)) (s) \right) \geq \frac{9}{10} \HH^{n+1}(A_0).
    \end{equation*}
    This, together with \eqref{e: B contains a lot of mass on both sides} implies that
    \begin{equation*}
       \left( A_0 \setminus (\spt(\mu))(s) \right) \cap \Omega \neq \varnothing,
    \end{equation*}
    and that the same holds for $\overline\Omega^c$. But note that if $x \in \left( A_0 \setminus (\spt(\mu))(s) \right) \cap \Omega$, then $B(x, \tau 2^{-K-1}R) \subset \Omega$, and again the same can be said for $\Omega^c$. The two balls thus found, one in $\Omega$ and the other in $\Omega^c$, are the sought after corkscrew balls. We conclude that $\Omega$ and its complement $\Omega^c$ are both two sided corkscrew domains. It follows from \cite{DJ90} and \cite{Semmes90} that $\spt(\mu)$ is uniformly $n$-rectifiable. 
\end{proof}

\begin{remark}
    The argument we proposed above is substantially easier than that used in \cite{FTV23} to find (quasi)corkscrew balls. This is due to two key assumptions: that $\spt(\mu) = \partial \Omega \cup \partial \Omega^c$ - as opposed to containment - and the $n$-ADR of $\mu$.
\end{remark}

\section{A direct bound of $\ve_n$ in terms of $\beta$-type coefficients}\label{s:(1)implies(2)}
In this section we prove that $(1) \implies (2)$. Of course this would follow from $(1) \implies (3)$ and Theorem \ref{teofac***}. Here, however, we prove a direct upper bound for $\ve_n$ in terms of centered $\obeta$ coefficients. This gives the desired result because, if $\mu$ is assumed to be UR, these latter coefficients satisfy the strong geometric lemma. That is, $\obeta_{\mu,2}(x,r)^2\frac{drd\mu(x)}{r}$ is a Carleson measure on $\spt\mu\times (0,\diam(\spt\mu))$, or equivalently
$$\int_{B(x_0,R)} \int_0^R \obeta_{\mu,2}(x,r)^2\frac{drd\mu(x)}{r} \lesssim \mu(B(x_0,R))$$
 for all $x_0\in\supp\mu$ and $R\in (0,\diam(\spt\mu))$.
Let $\mu$ be an $n$-Ahlfors regular measure in $\R^{d}$. Recall from Section \ref{s:prelim} that for $x\in\supp\mu$, $r>0$, 
$$
\beta_{\mu,2}(x,r) = \left(\inf_{L} \frac1{r^n}\int_{B(x,r)} \left(\frac{\dist(y,L)}{r}\right)^2\,d\mu(y)\right)^{1/2},
$$
where the infimum is taken over all $n$-planes in $\R^d$. Relevant to the proof of $(1) \implies (2)$ are the centered $\obeta$ coefficients, which we now define.
 
\begin{definition}
For $x\in \spt(\mu)$ and $r>0$, define
    \begin{equation*}
        \obeta_{\mu,2}(x,r)=\left(\inf_{L \ni x}\frac{1}{r^n}\int_{B(x,r)} \left(\frac{\dist(y,L)}{r} \right)^2d\mu(y) \right)^{1/2},
    \end{equation*}
    where the infimum is taken over all $n$-planes in $\mathbb{R}^d$ containing $x$. 
\end{definition}
Now, the strong geometric lemma is usually formulated in terms of non-centered $\beta$ coefficients (see \cite{DS91}). 
However, it also holds for the $\obeta$'s. That is:

\begin{lemma}\label{lem: centered beta and UR}
    Suppose $E\subset \mathbb{R}^{n+1}$ is an open set and $\mu$ is an $n$-dimensional AD-regular measure with $\spt  \mu=E$. Then $\obeta_{\mu,2}(x,r)^2\frac{drd\mu(x)}{r}$ is a Carleson measure on $E\times (0,\diam E)$ if and only if $\mu$ is is uniformly rectifiable.
\end{lemma}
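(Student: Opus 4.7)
The forward direction (Carleson of $\obeta_{\mu,2}^2 \tfrac{dr\,d\mu}{r}$ implies UR) is immediate: since the infimum in the definition of $\obeta_{\mu,2}(x,r)$ is taken over a strictly smaller family of planes than the one used for $\beta_{\mu,2}(x,r)$, one has $\beta_{\mu,2}(x,r) \leq \obeta_{\mu,2}(x,r)$ pointwise, so a Carleson condition on the latter yields one on the former, which is equivalent to uniform rectifiability by \cite{DS91}.

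For the converse, assume $\mu$ is UR, so that $\beta_{\mu,2}^2 \tfrac{dr\,d\mu(x)}{r}$ is a Carleson measure on $E \times (0, \diam E)$ by \cite{DS91}. Let $L^*_{x,r}$ denote an $L^2$-minimizing $n$-plane in the definition of $\beta_{\mu,2}(x,r)$, and set $d(x,r) := \dist(x, L^*_{x,r})$. Translating $L^*_{x,r}$ parallel to itself through $x$, applying the triangle inequality under the integral, and using the AD-regularity of $\mu$, one obtains the pointwise reduction
$$\obeta_{\mu,2}(x,r)^2 \lesssim \beta_{\mu,2}(x,r)^2 + \left(\frac{d(x,r)}{r}\right)^2.$$
The first summand already satisfies the strong geometric lemma, so the task reduces to showing $(d(x,r)/r)^2 \tfrac{dr\,d\mu(x)}{r}$ is Carleson.

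The plan for this is a dyadic scale comparison. Setting $r_k := 2^{-k} r$ and $L_k^* := L^*_{x, r_k}$, the standard plane-comparison lemma from \cite{DS91} (which uses Chebyshev together with the AD-regularity of $\mu$) asserts that, whenever $\beta_{\mu,2}(x, r_k)$ and $\beta_{\mu,2}(x, r_{k+1})$ are both below some dimensional threshold $c_0$, the normalized Hausdorff distance between $L_k^*$ and $L_{k+1}^*$ inside $B(x, r_{k+1})$ is bounded by $C(\beta_{\mu,2}(x, r_k) + \beta_{\mu,2}(x, r_{k+1})) r_k$. Since $x \in \spt\mu$, the AD-regularity also forces the trivial bound $d(x, r_k) \lesssim r_k$, so in particular $d(x, r_k) \to 0$ as $k \to \infty$. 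Telescoping over $k$ (stopping at the first dyadic scale $k_0$ where $\beta_{\mu,2}(x, r_{k_0}) > c_0$, if any, and absorbing the remaining trivial term $d(x, r_{k_0}) \lesssim r_{k_0} \lesssim r_{k_0}\,\beta_{\mu,2}(x, r_{k_0})/c_0$ into the same series) produces
$$\frac{d(x,r)}{r} \lesssim \sum_{k \geq 0} 2^{-k}\, \beta_{\mu,2}(x, 2^{-k} r).$$

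By Cauchy--Schwarz, $(d(x,r)/r)^2 \lesssim \sum_{k \geq 0} 2^{-k} \beta_{\mu,2}(x, 2^{-k} r)^2$. Integrating over $B_0 \times (0, R)$ against $\tfrac{dr\,d\mu(x)}{r}$, swapping sum and integral, and changing variables $s = 2^{-k} r$ in each inner integral reduces everything to the Carleson condition for $\beta_{\mu,2}^2$: each term contributes $\lesssim 2^{-k}\mu(B_0)$, and summing the geometric series in $k$ yields the desired bound. The main technical subtlety is that the plane comparison is naturally stated on the smaller ball $B(x, r_{k+1})$ while the foot of perpendicular from $x$ onto $L_k^*$ need not lie there when $\beta_{\mu,2}(x, r_k)$ is not small; the stopping-time splitting described above, which separates the sum over scales into a ``good'' part where the telescoping applies cleanly and a ``bad'' part where $(d/r)^2 \lesssim 1 \lesssim \beta_{\mu,2}^2$, is what handles this cleanly.
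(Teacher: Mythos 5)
Your proof is correct, and the easy direction ($\obeta_{\mu,2}\geq \beta_{\mu,2}$) is exactly the paper's. For the converse, however, you take a genuinely different route. You control the error term $\dist(x,L^*_{x,r})$ by a multi-scale telescoping: comparing the $L^2$-minimizing planes at consecutive dyadic scales via the plane-comparison lemma of \cite{DS91}, with a stopping time to handle scales where $\beta_{\mu,2}$ is not small, arriving at the pointwise bound $\obeta_{\mu,2}(x,r)\lesssim \sum_{k\geq0}2^{-k}\beta_{\mu,2}(x,2^{-k}r)$ and then summing by Cauchy--Schwarz. The paper instead works at a single (doubled) scale: for each $z\in B(x,r)$ it translates a minimizer $L_{z,2r}$ for $\beta_{\mu,2}(z,2r)$ so that it passes through $x$, obtains
$\obeta_{\mu,2}(x,r)^2\lesssim \beta_{\mu,2}(z,2r)^2+\bigl(\dist(x,L_{z,2r})/r\bigr)^2$,
and then \emph{averages in $z$ over $B(x,r)$ with respect to $\mu$}; after Fubini, the averaged error term $\frac1{r^n}\int_{B(z,r)}\bigl(\dist(x,L_{z,2r})/r\bigr)^2\,d\mu(x)$ is controlled by $\beta_{\mu,2}(z,2r)^2$ directly from the definition of $\beta_{\mu,2}$, since $x$ ranges over $\spt\mu$. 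This avoids the plane-comparison lemma, the smallness threshold, and the stopping-time bookkeeping altogether, and only uses the $\beta$'s at the scale $2r$. Your argument costs more machinery but yields the stronger pointwise domination of $\obeta_{\mu,2}(x,r)$ by a discrete convolution of the $\beta$'s over all smaller scales, which the paper's averaged, Fubini-based inequality does not give.
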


Although the preceding result is folklore knowledge, for the reader's convenience we will provide the detailed proof. 
Since $\beta_{\mu,2}(x,r)\leq \obeta_{\mu,2}(x,r)$, the ``only if'' direction follows immediately from \cite{DS91}. The necessary condition is an immediate corollary of the following lemma and \cite{DS91}.

\begin{lemma}
Let $\mu$ be an $n$-ADR measure in $\R^{d}$. For all $x_0\in\supp(\mu)$ and $0<r\leq R\leq\diam(\supp(\mu)$,
$$\int_{B(x_0,R)}\obeta_{\mu,2}(x,r)^2 \,d\mu(x)\lesssim \int_{B(x_0,2R)}\beta_{\mu,2}(x,2r)^2 \,d\mu(x).$$
\end{lemma}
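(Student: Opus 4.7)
The plan is to estimate $\obeta_{\mu,2}(x,r)^2$ pointwise by a clever centered competitor plane, and then reduce to an integral of $\beta_{\mu,2}(z,2r)^2$ via Fubini. The naive approach---taking as competitor, for each $x\in B(x_0,R)\cap\spt(\mu)$, the plane through $x$ parallel to an $L^2$-optimal plane $L_x^*$ for $B(x,2r)$---yields, after the triangle inequality and squaring, a bound of the form $\obeta_{\mu,2}(x,r)^2\lesssim\beta_{\mu,2}(x,2r)^2+\dist(x,L_x^*)^2/r^2$, in which the drift $\dist(x,L_x^*)^2/r^2$ is not pointwise controlled by $\beta_{\mu,2}(x,2r)^2$ (it can only be bounded by a constant). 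The key trick is to shift the reference point to a nearby $z$ and then average.

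Namely, for each $z\in B(x,r)\cap\spt(\mu)$, let $L_z^*$ denote an $n$-plane achieving $\beta_{\mu,2}(z,2r)$, and use as competitor in the definition of $\obeta_{\mu,2}(x,r)$ the plane $L_{z,x}$ through $x$ parallel to $L_z^*$. Since $\dist(y,L_{z,x})\leq\dist(y,L_z^*)+\dist(x,L_z^*)$, the inclusion $B(x,r)\subset B(z,2r)$, and the $n$-ADR estimate $\mu(B(x,r))\approx r^n$ give
\begin{equation*}
\obeta_{\mu,2}(x,r)^2\;\leq\;\frac{1}{r^{n+2}}\int_{B(x,r)}\dist(y,L_{z,x})^2\,d\mu(y)\;\lesssim\;\beta_{\mu,2}(z,2r)^2+\frac{\dist(x,L_z^*)^2}{r^2}.
\end{equation*}
Since the left-hand side is independent of $z$, I average the right-hand side over $z\in B(x,r)\cap\spt(\mu)$ with respect to $\mu$.

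Integrating over $x\in B(x_0,R)$ and applying Fubini handles both averaged terms. Using $\mu(B(x,r))\approx r^n$ and noting that $z\in B(x,r)$ with $r\leq R$ forces $z\in B(x_0,2R)$, the first term gives
\begin{equation*}
\int_{B(x_0,R)}\frac{1}{\mu(B(x,r))}\int_{B(x,r)}\beta_{\mu,2}(z,2r)^2\,d\mu(z)\,d\mu(x)\;\lesssim\;\int_{B(x_0,2R)}\beta_{\mu,2}(z,2r)^2\,d\mu(z).
\end{equation*}
For the second term, after swapping the order of integration, the inner integral for fixed $z$ becomes $\int_{B(z,r)}\dist(x,L_z^*)^2\,d\mu(x)\leq\int_{B(z,2r)}\dist(x,L_z^*)^2\,d\mu(x)=(2r)^{n+2}\beta_{\mu,2}(z,2r)^2$ by the very definition of $L_z^*$. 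Dividing by $r^{n+2}$ yields the same bound, and combining the two contributions produces the lemma.

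The conceptual obstacle is recognising that the naive competitor based on $L_x^*$ leaves an uncontrollable drift, but that replacing $L_x^*$ by the optimal plane $L_z^*$ at a nearby $z$ and averaging in $z$ converts the drift into precisely a $\beta$-integral under Fubini. This works because in the term $\dist(x,L_z^*)^2$ the point $x$ ends up playing the role of the variable of integration in $B(z,2r)$, so after swapping integrals the expression reconstitutes the defining quantity for $\beta_{\mu,2}(z,2r)^2$.
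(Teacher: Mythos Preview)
Your proof is correct and follows essentially the same approach as the paper: choose for each $z\in B(x,r)$ the optimal plane $L_z^*$ for $\beta_{\mu,2}(z,2r)$, use its translate through $x$ as competitor for $\obeta_{\mu,2}(x,r)$, average in $z$, integrate in $x$, and apply Fubini so that the drift term $\dist(x,L_z^*)^2/r^2$ reconstitutes $\beta_{\mu,2}(z,2r)^2$. The paper's argument is line-for-line the same, with only notational differences.
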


\begin{proof}
For any $z\in B(x,r)$, denote by $L_{z,2r}$ an $n$-plane that minimizes $\beta_{\mu,2}(z,2r)$. Let $L_{z,2r}^x$ the $n$-plane parallel to $L_{z,2r}$
through $x$.
Observe that for any $y\in B(x,r)$,
$$\dist(y,L_{z,2r}^x)\leq \dist(y,L_{z,2r}) + \dist(x,L_{z,2r}).$$
Thus, taking into account that $B(x,r)\subset B(z,2r)$,
\begin{align*}
\obeta_{\mu,2}(x,r)^2 & \leq  \frac1{r^n}\int_{B(x,r)} \left(\frac{\dist(y,L_{z,2r}^x)}{r}\right)^2\,d\mu(y)\\
& \lesssim \frac1{r^n}\int_{B(x,r)} \left(\frac{\dist(y,L_{z,2r})}{r}\right)^2\,d\mu(y) + 
\left(\frac{\dist(x,L_{z,2r})}{r}\right)^2\\
& \lesssim \beta_{\mu,2}(z,2r)^2 + 
\left(\frac{\dist(x,L_{z,2r})}{r}\right)^2.
\end{align*}
Then, averaging with respect to $z\in B(x,r)$, we obtain
\begin{align*}
\obeta_{\mu,2}(x,r)^2  
& \lesssim \frac1{r^{n}}\int_{z\in B(x,r)}\beta_{\mu,2}(z,2r)^2\,d\mu(z) + \frac1{r^n}\int_{z\in B(x,r)}
\left(\frac{\dist(x,L_{z,2r})}{r}\right)^2d\mu(z).
\end{align*}
Fix $x_0$ and $R>0$ as in the statement of the lemma. By Fubini, we obtain
\begin{align*}
\int_{B(x_0,R)}\obeta_{\mu,2}(x,r)^2 \,d\mu(x) & \lesssim 
 \frac1{r^{n}}\int_{B(x_0,R)} \int_{z\in B(x,r)}\beta_{\mu,2}(z,2r)^2\,d\mu(z)\,d\mu(x) \\
&\quad  + \frac1{r^n}
 \int_{B(x_0,R)} \int_{z\in B(x,r)}
\left(\frac{\dist(x,L_{z,2r})}{r}\right)^2 \,d\mu(z)\,d\mu(x)\\
& \lesssim
 \int_{z\in B(x_0,2R)} \frac{\mu(B(z,r))}{r^n}\,\beta_{\mu,2}(z,2r)^2\,d\mu(z)\\
&\quad +  \frac1{r^{n}}
 \int_{z\in B(x_0,2R)} 
 \int_{x\in B(z,r)}
\left(\frac{\dist(x,L_{z,2r})}{r}\right)^2 \,d\mu(x)\,d\mu(z)\\
& \lesssim \int_{z\in B(x_0,2R)} \beta_{\mu,2}(z,2r)^2\,d\mu(z).
\end{align*}
\end{proof}

\vspace{1cm}

Having dealt with this preliminary fact, we turn to prove what matters in this section:
\begin{lemma}\label{lem: eps controlled by beta}
    Let $\Omega^+\subset \mathbb{R}^{n+1}$ be a two-sided corkscrew open set and let $\Omega^-:=\mathbb{R}^{n+1}\setminus \overline{\Omega^+}$. Suppose $\mu$ is an $n$-dimensional AD-regular measure with $\spt(\mu)=\partial \Omega^+$. Then 
    \begin{equation}\label{e: eps bounded by obeta}
        \int_{r/2}^r \varepsilon_n(x,t)^2\frac{dt}{t}\lesssim \obeta_{\mu,2}(x,s)^2 \qquad \text{for all}\quad s\in\left(\frac{5}{4}r,2r\right), \, x\in \spt(\mu).
    \end{equation}
\end{lemma}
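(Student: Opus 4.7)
The plan is to take an $n$-plane $L$ through $x$ that nearly achieves $\obeta_{\mu,2}(x,s)$, use the two half-spaces $H^\pm$ determined by $L$ as a common competitor in the definition of $\ve_n(x,t)$ for every $t\in[r/2,r]$, and then bound the bad set on $\partial B(x,t)$ by the normal ``heights'' of $\spt\mu$ above and below $L$. Since $\ve_n\le C$, I may first reduce to the case $\obeta_{\mu,2}(x,s)\leq c_0$ for a small dimensional $c_0$, the other case being immediate. Writing $\nu$ for a unit normal to $L$ at $x$, $h(y)=\nu\cdot(y-x)$ and $H^\pm=\{\pm h>0\}$, the two-sided corkscrew condition together with the smallness of $\obeta$ lets me orient $L$ so that a corkscrew ball of $\Omega^\pm$ inside $B(x,s)$ sits in $H^\pm$ at height comparable to $s$; the smallness of $\obeta$ is crucial here because it forces the $\mu$-mass far from $L$ to be small, so corkscrew balls must sit on opposite sides of the tube where $\spt\mu$ concentrates.

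For each $p\in L$ introduce the one-sided fiber envelopes
\[
g_{\pm}(p)=\Bigl(\sup\bigl\{\pm h(y):y\in\spt\mu\cap B(x,s),\ \pi(y)=p\bigr\}\Bigr)_+,
\]
where $\pi$ is orthogonal projection onto $L$ and $(\cdot)_+$ denotes positive part. The topological heart of the argument is: if $y=p+h_y\nu\in H^+\cap\partial B(x,t)$ with $h_y>g_+(p)$, the vertical segment $\{p+h\nu:h_y\le h\le\sqrt{s^2-|p|^2}\}$ stays in $B(x,s)\setminus\spt\mu$ and, after a short lateral path above the concentrated part of $\spt\mu$, reaches the $\Omega^+$-corkscrew, so $y\in\Omega^+$. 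Consequently
\[
D_t^+:=(H^+\cap\partial B(x,t))\setminus\Omega^+\subset\bigl\{p+h\nu\in\partial B(x,t):h>0,\ h\le g_+(p)\bigr\},
\]
with the symmetric statement for $D_t^-$.

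Parameterizing the upper hemisphere of $\partial B(x,t)$ by $p\in L\cap B(x,t)$ with area element $(t/\sqrt{t^2-|p|^2})\,dp$, and handling the easy sub-case $g_+(p)\gtrsim t$ via $\ve_n\lesssim 1$, the substitution $u=t-|p|$ together with $\int_0^{g_+(p)^2/(2t)}\sqrt{t/(2u)}\,du\sim g_+(p)$ yields
\[
\HH^n(D_t^\pm)\lesssim t^{n-1}\!\int_{\St^{n-1}} g_\pm(t\omega)\,d\omega.
\]
Applying Cauchy--Schwarz on $\St^{n-1}$ and then polar coordinates $p=t\omega$ gives
\[
\int_{r/2}^{r}\ve_n(x,t)^2\,\frac{dt}{t}\lesssim\int_{r/2<|p|<r}\frac{g_+(p)^2+g_-(p)^2}{|p|^{n+2}}\,dp\lesssim\frac{1}{s^{n+2}}\int_{|p|<s}\bigl(g_+(p)^2+g_-(p)^2\bigr)\,dp,
\]
using $s\sim r$.

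The closing step is the covering inequality $\int_{|p|<s}g_\pm(p)^2\,dp\lesssim\int_{B(x,s)}\dist(\cdot,L)^2\,d\mu=\obeta_{\mu,2}(x,s)^2\,s^{n+2}$. Dyadically decompose $L_k:=\{p:g_+(p)\in[2^k,2^{k+1})\}$; for each $p\in L_k$ pick $y_p\in\spt\mu\cap B(x,s)$ on its fiber with $h(y_p)\in[2^k,2^{k+1})$. By ADR, $\mu(B(y_p,2^{k-1}))\gtrsim 2^{kn}$ and $\dist(\cdot,L)\gtrsim 2^{k-1}$ on $B(y_p,2^{k-1})$, so $\int_{B(y_p,2^{k-1})}\dist^2\,d\mu\gtrsim 2^{k(n+2)}$. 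A Vitali subfamily $\{B(y_{p_i},2^{k-1})\}_{i\in I_k}$ projects to a cover of $L_k$ by balls of radius $\lesssim 2^k$, yielding $|L_k|\lesssim|I_k|\,2^{kn}$; combined with the disjoint-balls lower bound this gives $\int_{L_k}g_+^2\,dp\lesssim\sum_i\int_{B(y_{p_i},2^{k-1})}\dist^2\,d\mu$. Summing in $k$ loses only a dimensional factor because $|h(y)|$ pins each $y\in\spt\mu$ into balls from $O(1)$ consecutive scales. The main obstacle I expect is the topological/orientation step in the second paragraph: correctly pairing $H^+$ with $\Omega^+$ and producing the connecting path from $y$ to the corkscrew ball through $B(x,s)\setminus\spt\mu$. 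This is where both sides of the two-sided corkscrew hypothesis and the initial reduction to small $\obeta$ are essential.
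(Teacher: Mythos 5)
Your proposal follows essentially the same strategy as the paper's proof: take the half--spaces of an $\obeta_{\mu,2}(x,s)$--minimizing plane through $x$ as a common competitor for every $t\in[r/2,r]$, reduce the spherical defect $\HH^n((H^\pm\cap S(x,t))\setminus\Omega^\pm)$ to the heights above $\partial H$ of ``worst'' points outside $\Omega^\pm$, and convert those heights into $\obeta_{\mu,2}(x,s)^2$ via Ahlfors regularity, Cauchy--Schwarz and a covering/bounded--overlap argument. The only structural difference is the slicing: the paper foliates each hemisphere by meridian arcs from the pole (Claim \ref{claim-ven-arcs}) and on each arc uses that the bad set lies below its highest bad point $z_i^*$, so its length is $\lesssim\dist(z_i^*,\partial H)$ and, crucially, $z_i^*\in\partial\Omega^+$ (every arc point strictly above it belongs to $\Omega^+$), which is what makes $\mu(B(z_i^*,\tfrac12\delta t))\approx(\delta t)^n$ available; you foliate by vertical fibers and introduce the envelope $g_\pm$. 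Both devices rest on the same input, namely that the polar cap of $S(x,t)\cap H^\pm$ lies in $\Omega^\pm$, which is exactly the step you flag as the main obstacle.

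That step is the genuine gap in your write--up: ``a short lateral path above the concentrated part of $\spt\mu$'' is not yet an argument, since small $\obeta_{\mu,2}$ controls $\dist(\cdot,\partial H)$ only in $L^2$. To close it you need (i) the upgrade $\obeta_{\mu,2}(x,s)\le\tau\Rightarrow\spt\mu\cap B(x,0.9s)\subset\{|h|\le\eta s\}$ (Chebyshev plus ADR), (ii) the remark that each of the two components of $B(x,s/4)\setminus\{|h|\le\eta s\}$ is connected and disjoint from $\partial\Omega^+$, hence contained entirely in $\Omega^+$ or in $\Omega^-$, and (iii) a volume comparison with the two corkscrew balls to force opposite labels; only then does the fiber--plus--lateral path land in $\Omega^+$. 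Two further points need repair. First, $\HH^n(D_t^\pm)\lesssim t^{n-1}\int_{\St^{n-1}}g_\pm(t\omega)\,d\omega$ is not correct as stated, because the constraint $\sqrt{t^2-|p|^2}\le g_+(p)$ involves the values of $g_+$ on an annulus near $|p|=t$ rather than at $t\omega$; keep the indicator, apply Cauchy--Schwarz first, and do a Fubini in $(t,|p|)$ (this also forces you to treat the exceptional set $\{g_\pm\gtrsim r\}$ by Chebyshev, since your final radial weight degenerates there). Second, the Vitali balls $B(y_p,2^{k-2})$ must lie inside $B(x,s)$ for the comparison with $\obeta_{\mu,2}(x,s)$, so $g_\pm$ should be defined using $\spt\mu\cap B(x,0.9s)$ only. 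All of this is repairable, and once repaired your argument is a legitimate variant of the one in the paper.
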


Let us prove that $(1) \implies (2)$ in Theorem \ref{t:main2} by assuming Lemma \ref{lem: eps controlled by beta} holds.
\begin{proof}[Proof of Theorem \ref{t:main2}, $(1) \implies (2)$]
Fix $x_0\in \spt(\mu)$ and $R\in (0,\diam\,\spt(\mu))$. It is sufficient to show that $I, II\leq C \mu(B(x_0,R))$ for some absolute constant $C$, where, 
\begin{align*}
& I:=\int_{B(x_0,R)}\int_0^{R/2}\varepsilon_n(x,t)^2\frac{dt}{t}d\mu(x)\quad \text{and} \\
&  II:=\int_{B(x_0,R)}\int_{R/2}^{R}\varepsilon_n(x,t)^2\frac{dt}{t}d\mu(x). 
\end{align*}
Using the trivial bound on $\varepsilon_n(x,t)$, it follows that 
\begin{equation}\label{e: II term bound}
    II\lesssim \mu(B(x_0,R)).
\end{equation}
We now bound $I$. From Lemma \ref{lem: eps controlled by beta},
 \begin{align}\label{e:eps and beta}
      &\int_0^{R/2} \varepsilon_n(x,t)^2\frac{dt}{t}
      \lesssim \sum_{k=0}^{\infty}\inf_{t\in \left(\frac{5}{4}\cdot2^{-(k+1)}R,2^{-k}R\right)}\obeta_{\mu,2}(x,t)^2
      \lesssim \int_0^{R}\obeta_{\mu,2}(x,t)^2\frac{dt}{t}.
 \end{align}
Since $\mu$ is uniformly $n$-rectifiable, from \eqref{e:eps and beta} and Lemma \ref{lem: centered beta and UR} we have, 
\begin{equation*}
I\leq \int_{B(x_0,R)}\int_0^{R}\obeta_{\mu,2}(x,t)^2\frac{dt}{t}d\mu(x)\leq C\mu(B(x_0,R)).
\end{equation*}
The theorem follows.
\end{proof}
We now turn to the proof of Lemma \ref{lem: eps controlled by beta}. The proof of this lemma is quite geometric. It essentially relies on the following intuition. Let $H^+$ be a half-space such that $\partial H^+$ minimizes $\obeta_2$. Then, $H^+$ is a competitor for $\ve_n$, and on any shell the measure of $H^+\cap S(x,t)\setminus \Omega^-$ is contained in horizontal strips on $S(x,t)$ determined by the equator, $\partial H^+\cap S(x,t)$, and a collection of points $z_i\in \partial \Omega^+$. Essentially, the mass of $H\cap S(x,t)\setminus \Omega^-$ is controlled by how far the points $z_i$ are from $\partial H$. Integrating over a range of scales, these distances can be controlled by $\obeta_2$.

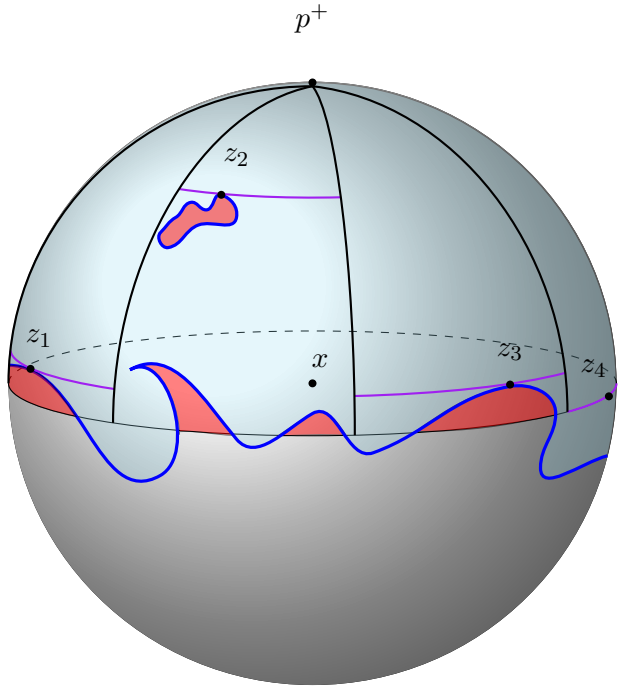
\begin{figure}[h]
    \centering
    \begin{tikzpicture}[scale=1,every node/.style={minimum size=1cm}]


\def\R{4} 

\def\angEl{10} 
\def\angAz{-100} 
\def\angPhiOne{-110} 
\def\angPhiTwo{-45} 
\def\angBeta{30} 


\pgfmathsetmacro\H{\R*cos(\angEl)} 
\LongitudePlane[xzplane]{\angEl}{\angAz}
\LatitudePlane[equator]{\angEl}{0}
\fill[ball color=white!10] (0,0) circle (\R); 

\DrawLatitudeCircle[\R]{0} 


 \def\eps{1.2}
	\coordinate (D) at (-4,{.2*\eps});
	\def\d{0};
	\coordinate (E) at (-2,{-.6*\eps-.5});
	\def\e{30};
	\coordinate (F) at (-2.4,{.4*\eps-.3});
	\def\f{210};
    \coordinate (G0) at (-1.75,{-.15*\eps-.5});
	  \def\gz{91};
    \coordinate (G1) at (-1.04,{-.17*\eps-.5});
 \def\ge{310};


	\coordinate (G) at (-1,{-.2*\eps-.5});
	\def\g{-50};
    
	\coordinate (H) at (0,{0*\eps-.4});
	\def\h{30};
    \coordinate (H1) at (-.46,{-0.25*\eps-.4});
	\def\ha{35};
    \coordinate (H2) at (.44,{-0.24*\eps-.4});
	\def\hz{122};

    
	\coordinate (I) at (.8,{-.1*\eps-.8});
	\def\i{20};
    
	\coordinate (J) at (2.6,{.3*\eps-.4});
	\def\j{10};
	\coordinate (K) at (3.2,{0*\eps-.4});
	\def\k{-90};
\coordinate (K1) at (1.35,{-0.2*\eps-.4});
	\def\ka{35};

    
    \coordinate (L) at (3,{-.6*\eps-.4});
	\def\l{-90};
	\coordinate (M) at (3.88,{-.8*\eps});
	\def\m{15};
    
 \def\addsm{.9}
	\coordinate (T) at (-2,{-.6*\eps+\addsm});
	\def\t{40};
    
	\coordinate (U) at (-2.4,{.4*\eps+\addsm});
	\def\u{-50};
	\coordinate (U1) at (-1.93,{-.25*\eps+\addsm});
	\def\ua{105};
    \coordinate (U2) at (-1.85,{-.24*\eps+\addsm});
	\def\uz{121};


	\coordinate (V) at (0,{0*\eps-.5});
	\def\v{30};
    
	\coordinate (W) at (0,{0*\eps+\addsm});
	\def\w{30};
	\coordinate (W1) at (-0.16,{-0.18*\eps+\addsm});
	\def\wa{80};
    \coordinate (W2) at (0.6,{-0.18*\eps+\addsm});
	\def\wz{143};
    
	\coordinate (X) at (.8,{-.1*\eps+\addsm-.1});
	\def\x{20};
	\coordinate (Y) at (2.6,{.3*\eps+\addsm});
	\def\y{10};
\coordinate (X1) at (.85,{-.1*\eps+\addsm-.1});
	\def\xa{22};
\coordinate (Y1) at (3.23,{-0.4*\eps+\addsm});
	\def\ya{95};

    
	\coordinate (Z) at (3.2,{0*\eps+\addsm});
	\def\z{-90};

    \coordinate (A) at (-2,2);
	\def\a{45};
	\coordinate (B) at (-1.8,2.2);
	\def\b{30};
	\coordinate (C) at (-1.6,2.4);
	\def\c{10};
	\coordinate (N1) at (-1.4,2.3);
	\def\n{0};
    
	\coordinate (O1) at (-1,2.2);
	\def\o{-100};
	\coordinate (P) at (-1.5,2.1);
	\def\p{200};
    \coordinate (R) at (-1.9,1.8);
	\def\r{190};
    \coordinate (S1) at (-1.2,2.5);
	\def\s{-20};

\filldraw[nearly transparent, fill=frontshade] 
    (M) arc (-13.7:175:4) to
	(D) to [out=\d,in={\e-180}] 
	(E) to [out=\e,in={\f-180}] 
	(F) to [out=\f-180,in={\g-180}] 
	(G) to [out=\g,in={\h-180}] 
	(H) to [out=\h,in={\i-180}] 
	(I) to [out=\i,in={\j-180}] 
	(J) to [out=\j,in={\k-180}] 
	(K) to [out=\k,in={\l-180}] 
	(L) to [out=\l,in={\m-180}] 
    (M) -- cycle;

\filldraw[semitransparent, fill=red] 
   (-1.75,-.68) to [out={\gz}, in={\f-180}]
   (-2.4,.18) to  [out={\f-180}, in={\ge-180}]
   (-1.04,-0.69) to [out={178}, in={3}] (-1.75,-.64);	

\filldraw[semitransparent, fill=red] 
   (-.46,{-0.25*\eps-.4}) to [out={\ha}, in={\h-180}]
   (0,{0*\eps-.4}) to  [out={\h}, in={\hz}]
   (.44,{-0.24*\eps-.4}) to [out={178}, in={3}] (-.46,{-0.25*\eps-.4});	

\filldraw[semitransparent, fill=red] 
   (1.35,{-0.2*\eps-.4}) to [out={\ka}, in={\j-180}]
   (2.6,{.3*\eps-.4}) to  [out={\j}, in={\k-180}]
   (3.2,{0*\eps-.42}) to [out={192}, in={0.5}] (1.35,{-0.2*\eps-.4});

 \draw[very thick,blue] 
	(D) to [out=\d,in={\e-180}] 
	(E) to [out=\e,in={\f-180}] 
	(F) to [out=\f-180,in={\g-180}] 
	(G) to [out=\g,in={\h-180}] 
	(H) to [out=\h,in={\i-180}] 
	(I) to [out=\i,in={\j-180}] 
	(J) to [out=\j,in={\k-180}] 
	(K) to [out=\k,in={\l-180}] 
	(L) to [out=\l,in={\m-180}] 
    (M);

\filldraw[ semitransparent, fill=red] 
	(A) to [out=\a,in={\b-180}] 
	(B) to [out=\b,in={\c-180}] 
	(C) to [out=\c,in={\n-180}] 
	(N1) to [out=\n,in={\s-180}] 
    (S1) to [out=\s,in={\o-180}] 
	(O1) to [out=\o,in={\p-180}] 
	(P) to [out=\p,in={\r-180}] 
	(R) to [out=\r,in={\a-180}] (A);

\draw[very thick,blue] 
     (A) to [out=\a,in={\b-180}] 
	(B) to [out=\b,in={\c-180}] 
	(C) to [out=\c,in={\n-180}] 
	(N1) to [out=\n,in={\s-180}] 
    (S1) to [out=\s,in={\o-180}] 
	(O1) to [out=\o,in={\p-180}] 
	(P) to [out=\p,in={\r-180}] 
	(R) to [out=\r,in={\a-180}] (A);

\coordinate[mark coordinate] (O) at (0,0);
\coordinate[mark coordinate] (N) at (0,{\H+.05});
\coordinate (S) at (0,{-\H-.05});

\node[above=8pt] at (N) {$p^+$};

\node at (0.1,.3) {$x$};

\filldraw[ semitransparent, fill=red] 
(-4,0) arc (180:219: 4 cm and .7 cm) to[out=125, in=\d] (D) --cycle;

\DrawPartialLatitudeCircle[\R]{48}{98-180}{49-180}
\DrawPartialLatitudeCircle[\R]{6.5}{49-180}{\angVis-180}
\DrawPartialLatitudeCircle[\R]{7.5}{147-180}{98-180}
\DrawPartialLatitudeCircle[\R]{0}{-\angVis}{147-180}

\DrawLongitudeCirclearc[\R]{0}
\DrawLongitudeCirclearc[\R]{49}
\DrawLongitudeCirclearc[\R]{98}
\DrawLongitudeCirclearc[\R]{147}

\coordinate[mark coordinate] (N2) at (-1.2,2.5);
\coordinate (N7) at (-1,3);
\node at (N7) {$z_2$};
\coordinate[mark coordinate] (Z1) at (-3.71, {.16*\eps});
\coordinate (Z11) at (-3.6, {.5*\eps});
\node at (Z11) {$z_1$};
\coordinate[mark coordinate] (Z3) at (2.6,{.32*\eps-.4});
\coordinate (Z33) at (2.6,{.68*\eps-.4});
\node at (Z33) {$z_3$};
\coordinate[mark coordinate] (Z4) at (3.9,{.19*\eps-.4});
\coordinate (Z5) at (3.7,{.5*\eps-.4});
\node at (Z5) {$z_4$};

\end{tikzpicture}

    \caption{The region $H^+(t)\setminus \Omega^+$ is contained between the equator and the latitude line passing through ``bad'' point $z_i\in \partial \Omega$. The region on the equator between any two of the partial great circles is an $(n-1)$-ball of radius $\approx \frac{t}{N}$.}
    \label{fig:idea of geo proof}
\end{figure}

\begin{proof}[Proof of Lemma \ref{lem: eps controlled by beta}]
Let $\tau \in (0,1)$ be a small parameter to be fixed below (it will be a universal constant). Let $x\in \spt(\mu)$ and $0<r<\diam \spt(\mu)$. Fix $ s\in \left(\frac{5}{4}r,2r\right)$. If $\obeta_{\mu,2}(x,s)\geq \tau$, then it follows immediately that
\begin{equation*}
    \int_{r/2}^r\varepsilon_n(x,t)^2\frac{dt}{t}\lesssim_{\tau} \obeta_{\mu,2}(x,s)^2.
\end{equation*}
So, suppose that $\obeta_{\mu,2}(x,s)<\tau $, and let $H$ be the half-space such that $\partial H$ minimizes $\obeta_{\mu,2}(x,s)$. By rotating and translating, assume $\partial H=\{x_{n+1}=0\}$ and $H=\{x_{n+1}>0\}$. Let $H^+(t):=S(x,t)\cap H$ and let $H^-(t):=S(x,t)\cap H^-$, where $H^-=\mathbb{R}^{n+1}\setminus \overline{H}$. Note that $x$ is fixed throughout the proof, so we omit the dependence on $x$ from our notation. 

We first show that for any $N\geq 1$ and for all $t\in [r/2,r]$ there exists a finite collection of points $\{z_i\}_{1\leq i\leq N^{n-1}}$ in $H^+(t)\setminus \Omega^+$ such that,
\begin{equation}\label{e: ptwse bound on epsn}
     \varepsilon_n(x,t)\lesssim \frac{1}{t^n}\sum_{i=1}^{N^{n-1}} \left(\frac{t}{N}\right)^{n-1}\dist(z_i,\partial H).
\end{equation}

Observe that 
\begin{equation*}
    \varepsilon_n(x,t)\leq \frac{1}{t^n}\left(\varepsilon_n^+(x,t,H)+\varepsilon_n^-(x,t,H)\right), 
\end{equation*}
where 
\begin{equation*}
   \varepsilon_n^{+}(x,t,H)=\mathcal{H}^n(H^+(t)\setminus \Omega^+) \qquad \text{and}\qquad  \varepsilon_n^{-}(x,t,H)=\mathcal{H}^n(H^-(t)\setminus \Omega^-).
\end{equation*} 

   \begin{figure}[h]
     \centering
\begin{minipage}[b]{0.4\textwidth}
\begin{tikzpicture}[scale=.75,every node/.style={minimum size=1cm}]


\def\R{4} 

\def\angEl{10} 
\def\angAz{-100} 
\def\angPhiOne{-110} 
\def\angPhiTwo{-45} 
\def\angBeta{30} 


\pgfmathsetmacro\H{\R*cos(\angEl)} 
\LongitudePlane[xzplane]{\angEl}{\angAz}
\LatitudePlane[equator]{\angEl}{0}
\fill[ball color=white!10] (0,0) circle (\R); 


 \def\eps{1.2}
	\coordinate (D) at (-4,{.2*\eps});
	\def\d{0};
	\coordinate (E) at (-2,{-.6*\eps-.5});
	\def\e{30};
	\coordinate (F) at (-2.4,{.4*\eps-.3});
	\def\f{210};
    \coordinate (G0) at (-1.75,{-.15*\eps-.5});
	  \def\gz{91};
    \coordinate (G1) at (-1.04,{-.17*\eps-.5});
 \def\ge{310};


	\coordinate (G) at (-1,{-.2*\eps-.5});
	\def\g{-50};
    
	\coordinate (H) at (0,{0*\eps-.4});
	\def\h{30};
    \coordinate (H1) at (-.46,{-0.25*\eps-.4});
	\def\ha{35};
    \coordinate (H2) at (.44,{-0.24*\eps-.4});
	\def\hz{122};

    
	\coordinate (I) at (.8,{-.1*\eps-.8});
	\def\i{20};
    
	\coordinate (J) at (2.6,{.3*\eps-.4});
	\def\j{10};
	\coordinate (K) at (3.2,{0*\eps-.4});
	\def\k{-90};
\coordinate (K1) at (1.35,{-0.2*\eps-.4});
	\def\ka{35};

    
    \coordinate (L) at (3,{-.6*\eps-.4});
	\def\l{-90};
	\coordinate (M) at (3.88,{-.8*\eps});
	\def\m{15};
    
 \def\addsm{.9}
	\coordinate (T) at (-2,{-.6*\eps+\addsm});
	\def\t{40};
    
	\coordinate (U) at (-2.4,{.4*\eps+\addsm});
	\def\u{-50};
	\coordinate (U1) at (-1.93,{-.25*\eps+\addsm});
	\def\ua{105};
    \coordinate (U2) at (-1.85,{-.24*\eps+\addsm});
	\def\uz{121};


	\coordinate (V) at (0,{0*\eps-.5});
	\def\v{30};
    
	\coordinate (W) at (0,{0*\eps+\addsm});
	\def\w{30};
	\coordinate (W1) at (-0.16,{-0.18*\eps+\addsm});
	\def\wa{80};
    \coordinate (W2) at (0.6,{-0.18*\eps+\addsm});
	\def\wz{143};
    
	\coordinate (X) at (.8,{-.1*\eps+\addsm-.1});
	\def\x{20};
	\coordinate (Y) at (2.6,{.3*\eps+\addsm});
	\def\y{10};
\coordinate (X1) at (.85,{-.1*\eps+\addsm-.1});
	\def\xa{22};
\coordinate (Y1) at (3.23,{-0.4*\eps+\addsm});
	\def\ya{95};

    
	\coordinate (Z) at (3.2,{0*\eps+\addsm});
	\def\z{-90};

    \coordinate (A) at (-2,2);
	\def\a{45};
	\coordinate (B) at (-1.8,2.2);
	\def\b{30};
	\coordinate (C) at (-1.6,2.4);
	\def\c{10};
	\coordinate (N1) at (-1.4,2.3);
	\def\n{0};
    
	\coordinate (O1) at (-1,2.2);
	\def\o{-100};
	\coordinate (P) at (-1.5,2.1);
	\def\p{200};
    \coordinate (R) at (-1.9,1.8);
	\def\r{190};
    \coordinate (S1) at (-1.2,2.5);
	\def\s{-20};

\filldraw[nearly transparent, fill=frontshade] 
    (M) arc (-13.7:175:4) to
	(D) to [out=\d,in={\e-180}] 
	(E) to [out=\e,in={\f-180}] 
	(F) to [out=\f-180,in={\g-180}] 
	(G) to [out=\g,in={\h-180}] 
	(H) to [out=\h,in={\i-180}] 
	(I) to [out=\i,in={\j-180}] 
	(J) to [out=\j,in={\k-180}] 
	(K) to [out=\k,in={\l-180}] 
	(L) to [out=\l,in={\m-180}] 
    (M) -- cycle;

\filldraw[semitransparent, fill=red] 
   (-1.75,-.68) to [out={\gz}, in={\f-180}]
   (-2.4,.18) to  [out={\f-180}, in={\ge-180}]
   (-1.04,-0.69) to [out={178}, in={3}] (-1.75,-.64);	

\filldraw[semitransparent, fill=red] 
   (-.46,{-0.25*\eps-.4}) to [out={\ha}, in={\h-180}]
   (0,{0*\eps-.4}) to  [out={\h}, in={\hz}]
   (.44,{-0.24*\eps-.4}) to [out={178}, in={3}] (-.46,{-0.25*\eps-.4});	

\filldraw[semitransparent, fill=red] 
   (1.35,{-0.2*\eps-.4}) to [out={\ka}, in={\j-180}]
   (2.6,{.3*\eps-.4}) to  [out={\j}, in={\k-180}]
   (3.2,{0*\eps-.42}) to [out={192}, in={0.5}] (1.35,{-0.2*\eps-.4});

 \draw[very thick,blue] 
	(D) to [out=\d,in={\e-180}] 
	(E) to [out=\e,in={\f-180}] 
	(F) to [out=\f-180,in={\g-180}] 
	(G) to [out=\g,in={\h-180}] 
	(H) to [out=\h,in={\i-180}] 
	(I) to [out=\i,in={\j-180}] 
	(J) to [out=\j,in={\k-180}] 
	(K) to [out=\k,in={\l-180}] 
	(L) to [out=\l,in={\m-180}] 
    (M);

\filldraw[ semitransparent, fill=red] 
	(A) to [out=\a,in={\b-180}] 
	(B) to [out=\b,in={\c-180}] 
	(C) to [out=\c,in={\n-180}] 
	(N1) to [out=\n,in={\s-180}] 
    (S1) to [out=\s,in={\o-180}] 
	(O1) to [out=\o,in={\p-180}] 
	(P) to [out=\p,in={\r-180}] 
	(R) to [out=\r,in={\a-180}] (A);

\draw[very thick,blue] 
     (A) to [out=\a,in={\b-180}] 
	(B) to [out=\b,in={\c-180}] 
	(C) to [out=\c,in={\n-180}] 
	(N1) to [out=\n,in={\s-180}] 
    (S1) to [out=\s,in={\o-180}] 
	(O1) to [out=\o,in={\p-180}] 
	(P) to [out=\p,in={\r-180}] 
	(R) to [out=\r,in={\a-180}] (A);

\coordinate[mark coordinate] (O) at (0,0);
\coordinate[mark coordinate] (N) at (0,{\H+.05});
\coordinate (S) at (0,{-\H-.05});

\node[above=8pt] at (N) {$p^+$};

\node at (0.1,.3) {$x$};

\filldraw[ semitransparent, fill=red] 
(-4,0) arc (180:219: 4 cm and .7 cm) to[out=125, in=\d] (D) --cycle;

\DrawLongitudeCirclearc[\R]{0}
\DrawLongitudeCirclearc[\R]{49}
\DrawLongitudeCirclearc[\R]{98}
\DrawLongitudeCirclearc[\R]{147}

\DrawPartialLatitudeCirclethick[\R]{0}{-\angVis}{\angVis-180} 

\coordinate[mark coordinate] (Eq1) at (-4,{0.3*\eps-.4});
\coordinate[mark coordinate] (Eq2) at (-2.63,{-0.1*\eps-.4});
\coordinate[mark coordinate] (Eq3) at (0.56,{-0.23*\eps-.4});
\coordinate[mark coordinate] (Eq4) at (3.37,{0.01*\eps-.4});

\DrawLongitudeCirclearccolor[\R]{62.5}
\coordinate (N33) at (-1.4,1.5);
\node[right] at (N33) {{\color{purple}$A^+(\theta_i)$}};
\coordinate[mark purple coordinate] (Eq5) at (-1.85, {-0.18*\eps-.4});
\coordinate (Eq6) at (-2.1, {-0.31*\eps-.4});
\node at (Eq6) {$\theta_i$};
\end{tikzpicture}
\end{minipage}
\hfill 
\begin{minipage}[b]{0.4\textwidth}
\begin{tikzpicture}[scale=0.75,every node/.style={minimum size=1cm}]


\def\R{4} 

\def\angEl{10} 
\def\angAz{-100} 
\def\angPhiOne{-110} 
\def\angPhiTwo{-45} 
\def\angBeta{30} 


\pgfmathsetmacro\H{\R*cos(\angEl)} 
\LongitudePlane[xzplane]{\angEl}{\angAz}
\LatitudePlane[equator]{\angEl}{0}
\fill[ball color=white!10] (0,0) circle (\R); 


 \def\eps{1.2}
	\coordinate (D) at (-4,{.2*\eps});
	\def\d{0};
	\coordinate (E) at (-2,{-.6*\eps-.5});
	\def\e{30};
	\coordinate (F) at (-2.4,{.4*\eps-.3});
	\def\f{210};
    \coordinate (G0) at (-1.75,{-.15*\eps-.5});
	  \def\gz{91};
    \coordinate (G1) at (-1.04,{-.17*\eps-.5});
 \def\ge{310};


	\coordinate (G) at (-1,{-.2*\eps-.5});
	\def\g{-50};
    
	\coordinate (H) at (0,{0*\eps-.4});
	\def\h{30};
    \coordinate (H1) at (-.46,{-0.25*\eps-.4});
	\def\ha{35};
    \coordinate (H2) at (.44,{-0.24*\eps-.4});
	\def\hz{122};

    
	\coordinate (I) at (.8,{-.1*\eps-.8});
	\def\i{20};
    
	\coordinate (J) at (2.6,{.3*\eps-.4});
	\def\j{10};
	\coordinate (K) at (3.2,{0*\eps-.4});
	\def\k{-90};
\coordinate (K1) at (1.35,{-0.2*\eps-.4});
	\def\ka{35};

    
    \coordinate (L) at (3,{-.6*\eps-.4});
	\def\l{-90};
	\coordinate (M) at (3.88,{-.8*\eps});
	\def\m{15};
    
 \def\addsm{.9}
	\coordinate (T) at (-2,{-.6*\eps+\addsm});
	\def\t{40};
    
	\coordinate (U) at (-2.4,{.4*\eps+\addsm});
	\def\u{-50};
	\coordinate (U1) at (-1.93,{-.25*\eps+\addsm});
	\def\ua{105};
    \coordinate (U2) at (-1.85,{-.24*\eps+\addsm});
	\def\uz{121};


	\coordinate (V) at (0,{0*\eps-.5});
	\def\v{30};
    
	\coordinate (W) at (0,{0*\eps+\addsm});
	\def\w{30};
	\coordinate (W1) at (-0.16,{-0.18*\eps+\addsm});
	\def\wa{80};
    \coordinate (W2) at (0.6,{-0.18*\eps+\addsm});
	\def\wz{143};
    
	\coordinate (X) at (.8,{-.1*\eps+\addsm-.1});
	\def\x{20};
	\coordinate (Y) at (2.6,{.3*\eps+\addsm});
	\def\y{10};
\coordinate (X1) at (.85,{-.1*\eps+\addsm-.1});
	\def\xa{22};
\coordinate (Y1) at (3.23,{-0.4*\eps+\addsm});
	\def\ya{95};

    
	\coordinate (Z) at (3.2,{0*\eps+\addsm});
	\def\z{-90};

    \coordinate (A) at (-2,2);
	\def\a{45};
	\coordinate (B) at (-1.8,2.2);
	\def\b{30};
	\coordinate (C) at (-1.6,2.4);
	\def\c{10};
	\coordinate (N1) at (-1.4,2.3);
	\def\n{0};
    
	\coordinate (O1) at (-1,2.2);
	\def\o{-100};
	\coordinate (P) at (-1.5,2.1);
	\def\p{200};
    \coordinate (R) at (-1.9,1.8);
	\def\r{190};
    \coordinate (S1) at (-1.2,2.5);
	\def\s{-20};

\filldraw[nearly transparent, fill=frontshade] 
    (M) arc (-13.7:175:4) to
	(D) to [out=\d,in={\e-180}] 
	(E) to [out=\e,in={\f-180}] 
	(F) to [out=\f-180,in={\g-180}] 
	(G) to [out=\g,in={\h-180}] 
	(H) to [out=\h,in={\i-180}] 
	(I) to [out=\i,in={\j-180}] 
	(J) to [out=\j,in={\k-180}] 
	(K) to [out=\k,in={\l-180}] 
	(L) to [out=\l,in={\m-180}] 
    (M) -- cycle;

\filldraw[semitransparent, fill=red] 
   (-1.75,-.68) to [out={\gz}, in={\f-180}]
   (-2.4,.18) to  [out={\f-180}, in={\ge-180}]
   (-1.04,-0.69) to [out={178}, in={3}] (-1.75,-.64);	

\filldraw[semitransparent, fill=red] 
   (-.46,{-0.25*\eps-.4}) to [out={\ha}, in={\h-180}]
   (0,{0*\eps-.4}) to  [out={\h}, in={\hz}]
   (.44,{-0.24*\eps-.4}) to [out={178}, in={3}] (-.46,{-0.25*\eps-.4});	

\filldraw[semitransparent, fill=red] 
   (1.35,{-0.2*\eps-.4}) to [out={\ka}, in={\j-180}]
   (2.6,{.3*\eps-.4}) to  [out={\j}, in={\k-180}]
   (3.2,{0*\eps-.42}) to [out={192}, in={0.5}] (1.35,{-0.2*\eps-.4});

 \draw[very thick,blue] 
	(D) to [out=\d,in={\e-180}] 
	(E) to [out=\e,in={\f-180}] 
	(F) to [out=\f-180,in={\g-180}] 
	(G) to [out=\g,in={\h-180}] 
	(H) to [out=\h,in={\i-180}] 
	(I) to [out=\i,in={\j-180}] 
	(J) to [out=\j,in={\k-180}] 
	(K) to [out=\k,in={\l-180}] 
	(L) to [out=\l,in={\m-180}] 
    (M);

\filldraw[ semitransparent, fill=red] 
	(A) to [out=\a,in={\b-180}] 
	(B) to [out=\b,in={\c-180}] 
	(C) to [out=\c,in={\n-180}] 
	(N1) to [out=\n,in={\s-180}] 
    (S1) to [out=\s,in={\o-180}] 
	(O1) to [out=\o,in={\p-180}] 
	(P) to [out=\p,in={\r-180}] 
	(R) to [out=\r,in={\a-180}] (A);

\draw[very thick,blue] 
     (A) to [out=\a,in={\b-180}] 
	(B) to [out=\b,in={\c-180}] 
	(C) to [out=\c,in={\n-180}] 
	(N1) to [out=\n,in={\s-180}] 
    (S1) to [out=\s,in={\o-180}] 
	(O1) to [out=\o,in={\p-180}] 
	(P) to [out=\p,in={\r-180}] 
	(R) to [out=\r,in={\a-180}] (A);

\coordinate[mark coordinate] (O) at (0,0);
\coordinate[mark coordinate] (N) at (0,{\H+.05});
\coordinate (S) at (0,{-\H-.05});

\node[above=8pt] at (N) {$p^+$};

\node at (0.1,.3) {$x$};

\filldraw[ semitransparent, fill=red] 
(-4,0) arc (180:219: 4 cm and .7 cm) to[out=125, in=\d] (D) --cycle;

\DrawLongitudeCirclearc[\R]{0}
\DrawLongitudeCirclearc[\R]{49}
\DrawLongitudeCirclearc[\R]{98}
\DrawLongitudeCirclearc[\R]{147}

\DrawPartialLatitudeCirclethick[\R]{0}{-\angVis}{\angVis-180} 

\coordinate[mark coordinate] (Eq1) at (-4,{0.3*\eps-.4});
\coordinate[mark coordinate] (Eq2) at (-2.63,{-0.1*\eps-.4});
\coordinate[mark coordinate] (Eq3) at (0.56,{-0.23*\eps-.4});
\coordinate[mark coordinate] (Eq4) at (3.37,{0.01*\eps-.4});

\DrawLongitudeCirclearccolor[\R]{62.5}
\coordinate (N33) at (-1.3,1.5);
\coordinate[mark purple coordinate] (Eq5) at (-1.85, {-0.18*\eps-.4});
\coordinate (Eq6) at (-2.1, {-0.31*\eps-.4});
\node at (Eq6) {$\theta_i$};

\coordinate[mark coordinate] (N2) at (-1.24,2.5);
\coordinate (N7) at (-1.43,2.75);
\node at (N7) {$z_i$};

\DrawLongitudeCirclearctoz[\R]{62.5}
\coordinate (N4) at (-0.35,2.9);
\node at (N4) {\small{$C^{+}(\theta_i)$}};
\end{tikzpicture}
\end{minipage}
\caption{The arc $A^+(\theta_i)$ and the subarc $C^+(\theta_i)$.}
\end{figure}
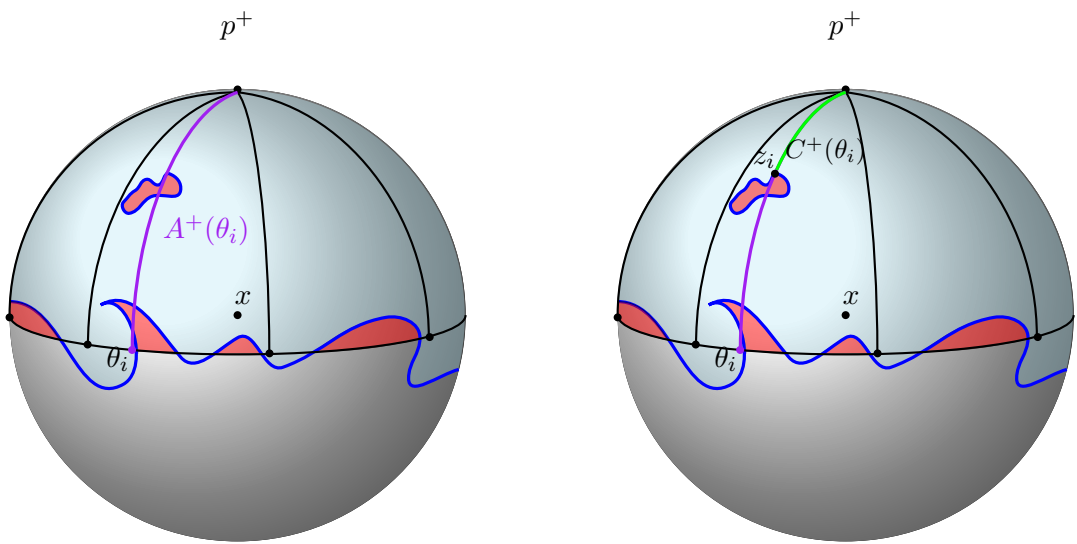

We now bound $\varepsilon_n^{+}(x,t,H)$, as the bound for $\varepsilon_n^{-}(x,t,H)$ will follow by analogous arguments. Let us first set some notation. Let $p^+=(0,\hdots,t)$ denote the north pole of $H^+(t)$ and for $\theta \in S(x,t)\cap \partial H$ denote the minimal arc on $S(x,t)$ between $p^+$ and $\theta$ by $A^{+}(\theta)$. Observe that $A^+(\theta)\subset H^+(t)$.\\

\begin{claim}\label{claim-ven-arcs}
\begin{align}\label{e: Jacobian}
    \epshplus(x,t,H)
    \leq\int_{S(x,t)\cap \partial H}\mathcal{H}^1\left(A^+(\theta)\setminus \Omega^+\right)d\theta. 
\end{align}
\end{claim}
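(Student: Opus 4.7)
The plan is a spherical Fubini (coarea) decomposition of the upper hemisphere $H^+(t)$ by the meridional arcs $A^+(\theta)$, indexed by points $\theta$ of the equator $S(x,t)\cap \partial H$. After translating so that $x=0$ and rotating so that $H=\{y_{n+1}>0\}$, every point of $H^+(t)\setminus\{p^+\}$ is uniquely of the form
\[
p(\omega,\phi) \;=\; t\sin\phi\cdot \omega \;+\; t\cos\phi\cdot (0,\dots,0,1),
\]
where $\omega$ ranges over the unit sphere of $\partial H$ and $\phi\in(0,\pi/2]$. Identifying $\omega$ with the equatorial point $\theta = t\omega\in S(x,t)\cap\partial H$, the curve $\phi\mapsto p(\omega,\phi)$ traces $A^+(\theta)$, with arc length element $ds = t\,d\phi$.

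The crucial ingredient is the Jacobian of this parametrization. A direct computation (or the standard polar decomposition of surface measure on a sphere of radius $t$) yields
\[
d\mathcal{H}^n(p) \;=\; (\sin\phi)^{n-1}\, ds(p)\, d\mathcal{H}^{n-1}(\theta),
\]
where $d\mathcal{H}^{n-1}(\theta)$ is the natural $(n-1)$-dimensional surface measure on the equator (an $(n-1)$-sphere of radius $t$), and which we identify with the symbol $d\theta$ used in the statement of the claim. Since $(\sin\phi)^{n-1}\leq 1$ for all $\phi\in[0,\pi/2]$, we may discard this weight at the cost of an inequality.

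Applying Fubini to the Borel set $E := H^+(t)\setminus \Omega^+$, and using that $A^+(\theta)\subset H^+(t)$ so that $E\cap A^+(\theta) = A^+(\theta)\setminus \Omega^+$, we obtain
\[
\epshplus(x,t,H) \;=\; \mathcal{H}^n(E) \;\leq\; \int_{S(x,t)\cap\partial H} \mathcal{H}^1\bigl(A^+(\theta)\setminus \Omega^+\bigr)\, d\theta,
\]
which is exactly \eqref{e: Jacobian}. There is no real obstacle: the entire content is the Jacobian identity above, together with the elementary bound $\sin\phi\leq 1$. (As a sanity check, when $n=1$ the weight $(\sin\phi)^{n-1}\equiv 1$ disappears and the inequality becomes the trivial planar identity that $\mathcal{H}^1$ of a subset of a semicircle equals the sum of the lengths of its two half-arcs.)
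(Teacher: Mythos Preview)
Your proof is correct and follows essentially the same route as the paper: parametrize the upper hemisphere by meridional arcs over the equator, compute the spherical Jacobian, and bound the weight by $1$. The paper carries this out in two steps (first a coarea slicing by horizontal hyperplanes $\{z_{n+1}=s\}$, then a rescaling of each parallel $\Gamma_s$ back to the equator $\Gamma_0$, followed by the angular substitution $s=t\sin\alpha$), arriving at a weight $\cos(\alpha)^{n}$ in their notation $\alpha=\tfrac{\pi}{2}-\phi$; your direct spherical-coordinate computation gives $(\sin\phi)^{n-1}$, which is the correct exponent, but since both weights are $\le 1$ the discrepancy is harmless for the claimed inequality. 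Your version is slightly cleaner.
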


\vspace{0.5cm}

Let us continue with the proof of Lemma \ref{lem: eps controlled by beta} assuming the claim to be true. We will go back to its demonstration in due time.

Let $N\geq 1$ and let $\delta=\frac{1}{2N}$. Take $\{\Delta_{i}\}_{i=1}^{N^{n-1}}$ to be a cover of $S(x,t)\cap \partial H$ satisfying $\sum_{i=1}^{N^{n-1}}\chi_{3\Delta_{i}}\leq C$, for some constant $C>0$, where for each $i$, $\Delta_{i}$ is an $(n-1)$-dimensional ball in $S(x,r)\cap \partial H$ with $\rad(\Delta_{i})\approx \delta t$. Then, applying Claim \ref{claim-ven-arcs}, we see that 
\begin{align*}
     \epshplus(x,t,H)
     &\leq \sum_{i=1}^{N^{n-1}} \int_{\Delta_{i}}\mathcal{H}^1\left(A^+(\theta)\setminus \Omega^+\right)d\theta
     \lesssim \sum_{i=1}^{N^{n-1}} (\delta t)^{n-1}\mathcal{H}^1\left(A^+(\theta_{i})\setminus \Omega^+\right), 
\end{align*}
where $\theta_{i}\in \Delta_{i}$ is chosen so that 
\begin{equation*}
    \frac{1}{2}\sup_{\theta\in \Delta_{i}}\mathcal{H}^1\left(A^+(\theta)\setminus \Omega^+\right)\leq \mathcal{H}^1\left(A^+(\theta_{i})\setminus \Omega^+\right).
\end{equation*}

In order to estimate $\mathcal{H}^1\left(A^+(\theta_{i})\setminus \Omega^+\right)$, define $C^+(\theta_{i})$ to be the sub-arc of $A^+(\theta_{i})$ with endpoints $p_H^+$ and $\zji$, where $z_{i}^*\in A^+(\theta_{i})\setminus \Omega^+$ is chosen so that 
\begin{equation*}
    \frac{1}{2}\sup_{z\in A^+(\theta)\setminus \Omega^+} \dist(z,\partial H)\leq d(z_{i}^*,\partial H).
\end{equation*}
In the case that $A^+(\theta)\setminus \Omega^+=\varnothing$, choose $\zji=\theta_{i}$. From the $n$-ADR of $\mu$ and the assumption that $\obeta_{\mu,2}(x,s)< \tau$, we have that $C^+(\theta_{i})\neq \varnothing$, whenever $\tau \in (0,1)$ is chosen sufficiently small. Then, 
\begin{equation}\label{e: epsnplus bounded by worst points}
    \epshplus(x,t,H)\lesssim \sum_{i=1}^{N^{n-1}} (\delta t)^{n-1}\left|\frac{\pi}{2}t-\mathcal{H}^1(C^+(\theta_{i})) \right|\lesssim \sum_{i=1}^{N^{n-1}} (\delta t)^{n-1}\dist(\zji,\partial H), 
\end{equation}
which proves \eqref{e: ptwse bound on epsn}. Define 
\begin{equation*}
    \cB_{\delta}=\{i\in [1,N^{n-1}]: \, \dist(\zji,\partial H)\geq \delta t\},
\end{equation*}
then, 
\begin{align}\label{e: first sum bound epshplus}
\begin{split}
   \sum_{i=1}^{N^{n-1}} (\delta t)^{n-1}\dist(\zji,\partial H)
    &\lesssim \delta t^n +(\delta t)^{n-1}\sum_{i\in \cB_{\delta}} \dist(\zji,\partial H).
    \end{split}
\end{align}
To estimate $\dist(\zji,\partial H)$ for $i\in B_{\delta}$, consider the $n$-dimensional ball $U_{i}:=B(\zji, \frac{1}{2}\delta t)$ on $S(x,t)$.  We have,  
\begin{align*}
    (\delta t)^{n-1}\sum_{i\in B_{\delta}} \dist(\zji,\partial H)
    &\lesssim\frac{1}{(\delta t)}\sum_{i\in \cB_{\delta}} \int_{U_{i}} \dist(y, \partial H)d\mu(y),
\end{align*}
since $\dist(y,\partial H)\approx \dist(\zji, \partial H)$ for any $y \in U_i=B(z_i^*, \frac12 \delta t)$. Thus,  \eqref{e: epsnplus bounded by worst points} and \eqref{e: first sum bound epshplus} give 
\begin{equation}\label{e: epsn final}
   \epshplus(x,t,H) \lesssim \delta t^n +\frac{1}{(\delta t)}\sum_{i\in \cB_{\delta}} \int_{U_{i}} \dist(y, \partial H)d\mu(y). 
\end{equation}

We now complete the proof of \eqref{e: eps bounded by obeta}. For $N \geq 1$, partition $[r/2,r]$ into $N$ intervals, $\{J_j\}_{j=1}^N$, such that for each $j$,  $|J_j|=\delta r$, where $\delta=\frac{1}{2N}$. Then, 
\begin{equation*}
    \int_{r/2}^r \varepsilon_n(x,t)^2 \frac{dt}{t}
    \lesssim \frac{1}{r}\sum_{j=1}^N \int_{J_j}\sup_{t\in J_j}\varepsilon_n(x,t)^2dt\lesssim \frac{1}{r}\sum_{j=1}^N \varepsilon_n(x,t_j)^2(\delta r), 
\end{equation*}
where $t_j\in J_j$ is chosen so that $\varepsilon(x,t_j)^2\geq \frac{1}{2}\sup_{t\in J_j}\varepsilon_n(x,t)^2$. Thus, unpacking definitions, we see that
\begin{equation*}\label{e: bound eps on shells}
    \int_{r/2}^r \varepsilon_n(x,t)^2 \frac{dt}{t}
    \lesssim \delta\sum_{j=1}^N \left(\frac{\max\{\varepsilon_n^+(x,t_j,H), \varepsilon_n^-(x,t_j,H)\}}{t_j^n}\right)^2.
\end{equation*}

Denote by $z_{j,i}^*$ the point $z_i^*$ as found above for $t=t_j$, and similarly for the $n$-dimensional ball $U_{j,i}= B(z_{j,i}^*, \frac12 \delta t_j)$ in $S(x,t_j)$. Then set $\cB_{\delta}^j=\{i\in [1,N^{n-1}]: \, d(z_{j,i}^*,\partial H)\geq \delta t_j\}$. Now, from \eqref{e: epsn final} we have that
\begin{align*}
    \int_{r/2}^r \varepsilon_n(x,t)^2 \frac{dt}{t}
    &\lesssim \delta^2+\frac{1}{\delta}\sum_{j=1}^N \frac{1}{t_j^{2n}}\left(\sum_{i\in \cB_{\delta}^j} \int_{U_{j,i}} \frac{\dist(y, \partial H)}{t_j}d\mu(y)\right)^2\\
    &\leq \delta^2+\frac{1}{\delta}\sum_{j=1}^N \frac{1}{t_j^{2n}}\left(\sum_{i\in \cB_{\delta}^j} \int_{U_{j,i}} \left(\frac{\dist(y, \partial H)}{t_j}\right)^2d\mu(y)\right)\left(\sum_{i\in \cB_{\delta}^j}\mu(U_{j,i})\right), 
\end{align*}
where the second inequality follows from two applications of Cauchy-Schwarz. Since $\mu(U_{j,i})\approx (\delta t_j)^{n}$ and $\#\cB_{\delta}\leq N^{n-1}\approx\frac{1}{\delta^{n-1}}$, then for a fixed $1\leq j \leq N$,
\begin{equation*}
    \sum_{i \in \cB_\delta^j} \mu(U_{j,i}) \lesssim \delta t_j^n,
\end{equation*}
where $\#B_{\delta}$ denotes the cardinality of $B_{\delta}$. Thus, considering that $t_j \approx r$, too, 
\begin{equation*}
     \int_{r/2}^r \varepsilon_n(x,t)^2 \frac{dt}{t}
     \lesssim \delta^2+\frac{1}{r^n}\sum_{j=1}^N \sum_{i\in \cB_{\delta}^j} \int_{U_{j,i}} \left(\frac{\dist(y, \partial H)}{t_j}\right)^2d\mu(y).
\end{equation*}
Also, since  $\sum_{i=1}^{N^{n-1}}\chi_{3\Delta_i}\leq C$, then the same is true for the family $\{U_{j,i}\}_{i=1}^{N^{n-1}}$. Hence, continuing the estimate from above gives 
\begin{align*}
     \int_{r/2}^r \varepsilon_n(x,t)^2 \frac{dt}{t}
     &\lesssim \delta^2+ \frac{1}{r^n}\sum_{j=1}^N\sum_{i\in \cB_{\delta}^j} \int_{U_{j,i}} \left(\frac{\dist(y, \partial H)}{r}\right)^2d\mu(y)\\
     &\lesssim \delta^2+ \obeta_{\mu,2}(x,s).
\end{align*}
Letting $\delta\to 0$ we conclude the proof of Lemma \ref{lem: eps controlled by beta}. 
\end{proof} 

Our reader is still due a proof of Claim \ref{claim-ven-arcs}.

\begin{proof}[Proof of Claim \ref{claim-ven-arcs}]
 Let $\mathbb{S}_+^{n}(t):=S(x,t)\cap H$ and $E:= \overline{\Omega^-}$. Then, 
    \begin{equation}\label{e: Jacobian estimate 1}
        \int_{\mathbb{S}_+^n(t)} \chi_{E}(z)d\mathcal{H}^n\rest_{\mathbb{S}^n(t)}=\int_0^t\int_{\Gamma_s}\chi_E(z) d\sigma_{n-1}^s \, ds, 
    \end{equation}
    where $\Gamma_s=\mathbb{S}_+^n(t)\cap\{z_{n+1}=s\}$, 
    $r_s$ is the radius of $\Gamma_s$, and $\sigma_{n-1}^s=\mathcal{H}^{n-1}\rest_{\Gamma_s}$. We consider the map  
    \begin{equation*}
        f_s: \Gamma_0\to \Gamma_s \qquad \text{such that}\qquad (w,0)\mapsto\left(\frac{r_s}{t}w, s\right).
    \end{equation*}
     we make the substitution $z=f(\theta,0)$ in \eqref{e: Jacobian estimate 1}, and then we get 
    \begin{equation}\label{e: Jacobian estimate 2}
        \int_{\mathbb{S}_+^n(t)} \chi_{E}(z)d\mathcal{H}^n\rest_{\mathbb{S}^n(t)}=\int_0^t\int_{\Gamma_0}\chi_E\circ f_s(\theta,0)\left(\frac{r_s}{t}\right)^{n-1}d\mathcal{H}^{n-1}\rest_{\Gamma_0}(\theta)ds. 
    \end{equation}
    Define $\gamma$ to be the angle measured from the $z_{n+1}$ positive semi-axis. Then, let 
    \begin{equation*}
        s=t\sin\left(\alpha\right), \qquad \text{where}\qquad \alpha=\frac{\pi}{2}-\gamma.
    \end{equation*}
  From \eqref{e: Jacobian estimate 2} and since $t\cos(\alpha)=r_s$, we have 
  \begin{align*}
       \int_{\mathbb{S}_+^n(t)} \chi_{E}(z)d\mathcal{H}^n\rest_{\mathbb{S}^n(t)}
       &= \int_0^{\pi/2}t\int_{\Gamma_0}\chi_E\circ f_{s(\alpha)}(\theta,0)\cos(\alpha)^{n}\,
       d\mathcal{H}^{n-1}\rest_{\Gamma_0}(\theta)d\alpha\\
       &\leq \int_{\Gamma_0}\int_0^{\pi/2}\chi_E(\theta\cos(\alpha),t\sin(\alpha)) \,t\,d\alpha \,d\mathcal{H}^{n-1}(\theta)\\
       &=\int_{\Gamma_0}\mathcal{H}^1(E\cap A^+(\theta))\,d\mathcal{H}^{n-1}(\theta).
  \end{align*}
  Recalling the definitions of $E$ and $\Gamma_0$, \eqref{e: Jacobian} holds.
\end{proof}

\section{Estimates on CAD domains}\label{s:CAD}

Up to now we have showed that $(2) \iff (1)$ in Theorem \ref{t:main2}, see Sections \ref{s:(2)implies(1)} and \ref{s:(1)implies(2)}. Additionally, $(3) \implies (1)$ follows from $(2) \implies (1)$ and Theorem \ref{teofac***}. The next two sections are devoted to the proof of $(1) \implies (3)$. 
More specifically, here we will show some estimates on chord-arc domains (CADs). They will be used in the next section to complete the proof of Theorem \ref{t:main2}. 

We begin by recalling two key lemmas from \cite{JK82}.
\begin{lemma}[Lemma 4.4, \cite{JK82}]\label{lem:bound on harmonic measure corkscrew point}
	Let $\Omega$ be an 
	NTA domain. Given a compact set $K\subset \mathbb{R}^{n+1}$ for $x\in \partial \Omega \cap K$ and $0<2r<R_K$. If $u\geq 0$ is a harmonic function in $\Omega\cap B(x,4r)$ and $u$ vanishes continuously on $B(x,2r)\cap \partial \Omega$ then 
	\begin{equation*}
		u(p)\leq Cu(q_{x,r}) \qquad \text{for all}\quad p\in B(x,r) \cap \Omega,
	\end{equation*} 
	where $C$ depends only on $K$ and $q_{x,r}$ is the corkscrew point for $x$ at scale $r$ in $\Omega$. 
\end{lemma}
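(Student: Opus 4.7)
The plan is to split by $d(p) := \dist(p, \partial\Omega)$ into an ``interior'' regime handled by the Harnack chain condition and a ``boundary'' regime handled by harmonic decay at $\partial\Omega$; both ingredients come directly from the NTA definition. Fix a small threshold $\eta > 0$ depending only on the NTA constants of $\Omega$. In the \emph{interior regime} $d(p) \geq \eta r$, both $p$ and $q_{x,r}$ lie in $\Omega \cap B(x, 2r)$ at distance $\gtrsim \eta r$ from $\partial\Omega$, with $|p - q_{x,r}| \leq 2r$. The Harnack chain condition (Definition \ref{def: harnack chain}) produces a chain of balls $B_1, \ldots, B_N \subset \Omega \cap B(x, 3r)$ connecting $p$ to $q_{x,r}$ with $N = N(\eta)$; since $u$ is nonnegative and harmonic on $\Omega \cap B(x, 4r) \supset \bigcup_j B_j$, iterating Harnack's inequality along the chain yields $u(p) \leq C^N u(q_{x, r})$.

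In the \emph{boundary regime} $d(p) < \eta r$, let $y \in \partial\Omega$ be nearest to $p$, so $y \in \partial\Omega \cap B(x, 2r)$ where $u$ vanishes continuously. The key input is Bourgain's estimate, which exploits the \emph{exterior} corkscrew condition of $\Omega$: there exists $c \in (0,1)$ depending only on the NTA constants such that for every $y' \in \partial\Omega \cap B(x, 2r)$ and every $\rho \in (0, r/2)$,
\begin{equation*}
\omega^{p'}_{B(y', 2\rho) \cap \Omega}\bigl(\partial B(y', 2\rho) \cap \Omega\bigr) \leq 1 - c \qquad \text{for all } p' \in B(y', \rho) \cap \Omega.
\end{equation*}
Since $u \geq 0$ is harmonic on $B(y', 2\rho) \cap \Omega$ and vanishes on $\partial\Omega \cap B(y', 2\rho)$, the maximum principle gives $\sup_{B(y', \rho) \cap \Omega} u \leq (1 - c) \sup_{B(y', 2\rho) \cap \Omega} u$. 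Iterating dyadically from scale $\rho_0 = r/4$ down to $\rho_k \approx d(p)$ produces the H\"older-type decay
\begin{equation*}
u(p) \leq C \Bigl(\tfrac{d(p)}{r}\Bigr)^{\alpha} \sup_{B(y, r/4) \cap \Omega} u, \qquad \alpha = \alpha(c) > 0.
\end{equation*}

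It remains to dominate $\sup_{B(y, r/4) \cap \Omega} u$ by $C\, u(q_{x,r})$. The supremum is attained at some $z^* \in \partial B(y, r/4) \cap \overline{\Omega}$, since the other boundary piece sits inside $\partial\Omega \cap B(x, 2r)$ where $u$ vanishes. If $d(z^*) \geq \eta' r$ for a further small threshold $\eta'$, the interior regime applied to $z^*$ gives $u(z^*) \leq C u(q_{x,r})$. Otherwise one reapplies the boundary decay estimate at $z^*$ on a slightly larger scale still contained in $B(x, 2r)$, picking up a factor of $(\eta')^\alpha$; taking $\eta'$ small enough lets us absorb this term and close the estimate. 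The main obstacle is precisely this combining/bootstrap step, which must avoid circularity between the two regimes: the boundary decay only compares $u(p)$ to a local supremum of $u$, which may itself be attained near $\partial\Omega$. This is handled cleanly by Jerison and Kenig in \cite{JK82}, of whose proof the outline above is a reformulation.
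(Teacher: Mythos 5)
The paper does not prove this statement: it is quoted verbatim as Lemma~4.4 of \cite{JK82} (the classical Carleson estimate for NTA domains) and used as a black box, so there is no internal proof to compare against. Your two--regime outline --- Harnack chains when $\dist(p,\pom)\gtrsim r$, Bourgain/exterior--corkscrew decay when $p$ is close to $\pom$ --- is indeed the standard Jerison--Kenig strategy, and the two ingredients are correctly identified and correctly deployed up to the final step. One minor caveat in the interior regime: the Harnack chain condition as stated in Definition~\ref{def: harnack chain} does not by itself guarantee that the chain stays inside $B(x,3r)$, i.e.\ inside the region $\Omega\cap B(x,4r)$ where $u$ is known to be harmonic; one needs the localized form of the Harnack chain condition (valid for NTA domains, and implicit in \cite{JK82}), which should be invoked explicitly.

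The genuine gap is the ``absorption'' step you flag yourself. After the boundary decay you have $u(p)\leq C(d(p)/r)^{\alpha}S$ with $S=\sup_{B(y,r/4)\cap\Omega}u$, and if $S$ is attained at a near--boundary point $z^*$, reapplying the decay at $z^*$ gives $S=u(z^*)\leq C(\eta')^{\alpha}S'$ where $S'$ is a supremum over a \emph{larger} ball. Since trivially $S'\geq S$, this inequality carries no information about $S$ and there is nothing to absorb; you would need $S'\lesssim S$, which is essentially the statement being proved. The way Jerison and Kenig close the argument is by contraposition: assume $u(p_0)>M\,u(q_{x,r})$ for a large $M$; the interior regime forces $d(p_0)$ to be small, and the reversed boundary estimate then produces a point $p_1$ with $u(p_1)\geq 2\,u(p_0)$ and $d(p_1)\leq \theta\, d(p_0)$ for some fixed $\theta<1$, with $|p_1-p_0|\lesssim d(p_0)$. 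Iterating yields $p_k\to \xi\in\pom\cap B(x,2r)$ (the displacements are geometrically summable, so the points do not escape $B(x,2r)$) while $u(p_k)\to\infty$, contradicting the continuous vanishing of $u$ on $B(x,2r)\cap\pom$. Without this blow-up-and-contradict mechanism the two regimes do not combine, so as written the proposal is an accurate but incomplete reformulation of the cited proof rather than a self-contained one.
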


\begin{lemma}[Lemma 4.8, \cite{JK82}]\label{lem:density of harmonic measure unif bounded}
	Let $\Omega$ be an 
	NTA domain. Given a compact set $K\subset \mathbb{R}^{n+1}$ for $x\in \partial \Omega \cap K$, $0<2r<R_K$ and $p\in \Omega\setminus B(x,2r)$. Then 
	\begin{equation*}
		C^{-1}<\frac{\omega^p(B(x,r))}{r^{n-1}G(q_{x,r},p)}<C,
	\end{equation*}
	where $G(\cdot,p)$ is the Green function of $\Omega$ with pole $p$ and $q_{x,r}$ is the corkscrew point for $x$ at scale $r$.
\end{lemma}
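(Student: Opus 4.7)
The final statement is Lemma 4.8 of \cite{JK82}, a classical result; I would sketch the standard argument. Fix $x\in\partial\Omega\cap K$ and $p\in\Omega\setminus B(x,2r)$, and write $q:=q_{x,r}$. My plan is to prove the two inequalities separately: a cutoff/Green's identity computation for the upper bound, and a reference-point comparison combined with the boundary Harnack principle for the lower bound.

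For the upper bound $\omega^p(B(x,r))\lesssim r^{n-1}G(q,p)$, I would pick $\phi\in C_c^\infty(B(x,3r/2))$ with $\chi_{B(x,r)}\le\phi\le 1$ and $|\Delta\phi|\lesssim r^{-2}$. Since $p\notin B(x,2r)$ we have $\phi(p)=0$, so Green's representation for the harmonic extension of $\phi|_{\partial\Omega}$ gives
\[
\omega^p(B(x,r))\;\le\;\int\phi\,d\omega^p\;=\;\int_\Omega G(y,p)(-\Delta\phi)(y)\,dy.
\]
On $\Omega\cap B(x,3r/2)$ the function $y\mapsto G(y,p)$ is positive harmonic and vanishes continuously on $\partial\Omega\cap B(x,2r)$; by Lemma \ref{lem:bound on harmonic measure corkscrew point} (applied after a small reduction of $r$ so that $p\notin B(x,4r)$, or via a short Harnack chain when $p$ lies in the annulus $B(x,4r)\setminus B(x,2r)$) one then obtains $G(y,p)\lesssim G(q,p)$ throughout that region. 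Combining with $|\Delta\phi|\lesssim r^{-2}$ and $|B(x,3r/2)|\lesssim r^{n+1}$ yields the upper bound.

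For the lower bound, I would set $u(p):=\omega^p(B(x,r))$ and $v(p):=r^{n-1}G(q,p)$. As functions of $p$, both are positive harmonic on $\Omega\setminus\overline{B(x,2r)}$ and vanish continuously on $\partial\Omega\setminus\overline{B(x,2r)}$ (for $u$ trivially; for $v$ by NTA boundary regularity). Using the corkscrew condition at scale $2r$, I would select a reference point $p^*\in\Omega$ with $|p^*-x|\approx 2r$ and $\dist(p^*,\partial\Omega)\approx r$. A Poisson-kernel sub-barrier in a small ball contained in $\Omega$ and attached to $B(x,r)\cap\partial\Omega$ shows $u(p^*)\gtrsim 1$, while the standard interior estimate for NTA Green's functions applied to the pair $q,p^*$ (both at distance $\approx r$ from $\partial\Omega$ and at distance $\approx r$ from each other) gives $G(q,p^*)\approx r^{1-n}$, so $v(p^*)\approx 1$. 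Hence $u(p^*)/v(p^*)\approx 1$. The boundary Harnack principle for NTA domains (also from \cite{JK82}) then propagates the comparability $u\approx v$ to every $p\in\Omega\setminus B(x,4r)$, and interior Harnack chains of uniformly bounded length (controlled by $K$) absorb the remaining annulus $B(x,4r)\setminus B(x,2r)$.

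The main obstacle is the boundary Harnack step: one must verify that the comparison principle applies on $\Omega\setminus\overline{B(x,2r)}$ with constants depending only on the NTA character of $\Omega$ and on $K$, and, jointly with it, must track the dependence of all intermediate estimates on scale. This is precisely the content developed in \cite{JK82} alongside Lemma \ref{lem:bound on harmonic measure corkscrew point}, which is why we simply quote the final result here.
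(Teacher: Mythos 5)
The paper does not prove this statement at all: it is imported verbatim as \cite[Lemma 4.8]{JK82}, so there is no internal argument to compare yours against, and your closing remark that one ``simply quotes the final result'' is in fact exactly what the authors do. Judged on its own, your sketch of the upper bound is the standard one and is sound: the cutoff/Riesz-representation computation together with the Carleson-type estimate of Lemma \ref{lem:bound on harmonic measure corkscrew point} applied to $G(\cdot,p)$ gives $\omega^p(B(x,r))\lesssim r^{-2}\cdot r^{n+1}\cdot G(q_{x,r},p)$ as claimed (your Green identity has the sign reversed --- with the convention $-\Delta_y G(p,y)=\delta_p$ one has $\int\phi\,d\omega^p=\phi(p)+\int_\Omega G(p,y)\Delta\phi(y)\,dy$ --- but this does not affect the estimate once you pass to $|\Delta\phi|$).

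The lower bound is where I would push back. Deriving it from the boundary Harnack principle inverts the logical order of \cite{JK82}: there the comparison theorem for two harmonic functions vanishing on a boundary portion is itself \emph{deduced from} Lemma 4.8, so as a reconstruction of ``the standard argument'' your route is circular unless you import an independent proof of BHP (e.g.\ Aikawa's), which is much heavier machinery than needed. The direct argument is a one-step maximum principle: set $\rho=\tfrac12\dist(q_{x,r},\partial\Omega)\approx r$ and compare $y\mapsto\omega^y(B(x,r))$ with $y\mapsto c\,r^{n-1}G(q_{x,r},y)$ on $\Omega\setminus\overline{B(q_{x,r},\rho)}$. On $\partial B(q_{x,r},\rho)$ the Green function is $\lesssim\rho^{1-n}\approx r^{1-n}$ while $\omega^y(B(x,r))\geq c_0$ by the Bourgain-type estimate (which is also the correct substitute for your ``Poisson-kernel sub-barrier'': a single interior ball does not force $\omega^{p^*}(B(x,r))\gtrsim1$; one needs the exterior corkscrew/capacity density of $\partial\Omega$ at $x$). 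On $\partial\Omega$ both sides vanish, so the maximum principle gives the inequality at $y=p$. This removes the BHP dependence entirely and only uses estimates that precede Lemma 4.8 in \cite{JK82}.
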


Next, the following notation will be useful.

\begin{definition}\label{def: density ratios}
	For $x\in \mathbb{R}^{n+1}$ and $r>0$ define the \textit{density ratio} of the harmonic measure $\omega_i$ of $\Omega_i$ with pole at $p_i$ as
	\begin{equation*}
		\theta_i(x,r):=\theta_{\omega_i}(x,r):=\frac{\omega_i(B(x,r))}{r^n}, \qquad \text{for}\quad i=1,2.
	\end{equation*}
\end{definition}

Finally, let us state the first of the two lemmas to be proven in this section. Note that it may be thought of as a quantitative version of \cite[Theorem D]{FTV24}.

\begin{lemma}\label{lem: bounding by logs CAD}
Let $c_1 >0$ and 
	let $\Omega_1,\Omega_2\subset \mathbb{R}^{n+1}$ be disjoint chord-arc domains. For  $i=1,2$, let $p_i\in \Omega_i$ be such that $\dist(p_i,\partial \Omega_i)\geq c_1\diam(\partial \Omega_i)$. Denote by $\omega_i$ the harmonic measure for $\Omega_i$ with respect to the pole $p_i$. Let $x\in \mathbb{R}^{n+1}\setminus (\Omega_1\cup \Omega_2)$ and denote $\delta_x=\max_i(\dist(x,\partial \Omega_i))$. For $\rho,r$ such that $2\delta_x\leq \rho\leq r\leq \frac{\min_i(\dist(x,p_i))}{4}$, we have
\begin{equation*}
		\int_{\rho}^{r}\frac{\alpha_1(x,t)+\alpha_2(x,t)-2}{t}\,dt\lesssim \log\left(\frac{\theta_{1}(x,r)}{\theta_1(x,\rho)}\right) +\log\left(\frac{\theta_{2}(x,r)}{\theta_2(x,\rho)}\right)+1,
	\end{equation*}
	where the implicit constant depends on the chord-arc character of $\Omega_1,\Omega_2$ and $c_1$.
\end{lemma}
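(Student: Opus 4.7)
The plan is to apply the Alt--Caffarelli--Friedman monotonicity formula (Theorem \ref{teoACF-elliptic}) to the pair $u_i := G_i(\cdot, p_i)\,\chi_{\Omega_i}$, where $G_i$ is the Green function of $\Omega_i$ with pole at $p_i$. Because $r \leq \min_i \dist(x, p_i)/4$, both poles lie outside $B(x, 4r)$, so each $u_i$ is nonnegative, continuous, and subharmonic there, and disjointness of $\Omega_1, \Omega_2$ forces $u_1 u_2 \equiv 0$. Integrating \eqref{eqprec1} from $\rho$ to $r$ yields
\begin{equation*}
\int_\rho^r \frac{\alpha_1(x,t) + \alpha_2(x,t) - 2}{t}\, dt \;\leq\; \frac{1}{2}\, \log \frac{J(x, r)}{J(x, \rho)},
\end{equation*}
so the lemma reduces to proving that $J(x, \sigma) \approx \theta_1(x, \sigma)^2\, \theta_2(x, \sigma)^2$ at both $\sigma = r$ and $\sigma = \rho$, with constants depending only on the CAD character of $\Omega_1,\Omega_2$ and on $c_1$.

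For the upper bound on $J(x,r)$, fix $x_i \in \partial \Omega_i$ with $|x - x_i| \leq \delta_x \leq r/2$, so that $B(x, 4r) \subset B(x_i, 5r)$. Lemma \ref{lem:bound on harmonic measure corkscrew point} then gives $G_i(y) \lesssim G_i(q^i)$ throughout $B(x, 4r) \cap \Omega_i$, where $q^i$ is a corkscrew for $x_i$ at scale comparable to $r$, and Lemma \ref{lem:density of harmonic measure unif bounded} combined with doubling of $\omega_i$ on $\partial \Omega_i$ identifies $G_i(q^i) \approx \omega_i(B(x, r))/r^{n-1} = r\, \theta_i(x, r)$. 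Feeding this pointwise bound into \eqref{eqaux*} and a Caccioppoli estimate yields
\begin{equation*}
\frac{1}{r^2} \int_{B(x, r)} \frac{|\nabla G_i(y)|^2}{|y - x|^{n-1}}\, dy \;\lesssim\; \frac{1}{r^{n+1}} \|\nabla G_i\|_{L^2(B(x, 2r))}^2 \;\lesssim\; \frac{1}{r^{n+3}}\|G_i\|_{L^2(B(x, 4r))}^2 \;\lesssim\; \theta_i(x, r)^2,
\end{equation*}
so $J(x, r) \lesssim \theta_1(x, r)^2\, \theta_2(x, r)^2$ after multiplying in $i$.

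The main obstacle is the matching lower bound on $J(x, \rho)$. Because $\rho \geq 2 \delta_x$, each $i$ admits a boundary point $x_i \in \partial \Omega_i \cap B(x, \rho/2)$, and the interior corkscrew condition furnishes a ball $B_i \subset \Omega_i \cap B(x_i, \rho/2)$ of radius comparable to $\rho$. Interior Harnack chained to $q_{x_i, \rho}^i$, together with Lemma \ref{lem:density of harmonic measure unif bounded} and doubling of $\omega_i$, forces $G_i \gtrsim \rho\, \theta_i(x, \rho)$ throughout $B_i$. Since $G_i$ vanishes on $\partial \Omega_i$ and the NTA exterior corkscrew condition forces $\partial \Omega_i$ to have positive capacity density at $x_i$, a Poincar\'e inequality for functions vanishing on a set of positive capacity gives
\begin{equation*}
\int_{B(x_i, \rho/2) \cap \Omega_i} |\nabla G_i|^2\, dy \;\gtrsim\; \frac{1}{\rho^2}\int_{B_i} G_i^2\, dy \;\gtrsim\; \rho^{n+1}\, \theta_i(x, \rho)^2.
\end{equation*}
Since $B(x_i, \rho/2) \subset B(x, \rho)$ and $|y - x| \lesssim \rho$ there, one deduces $\rho^{-2} \int_{B(x, \rho)} |\nabla G_i|^2 |y - x|^{-(n-1)}\, dy \gtrsim \theta_i(x, \rho)^2$; multiplying in $i$ closes the lower bound, and combining the two $J$-estimates with the ACF inequality yields the lemma, the additive $1$ absorbing the implicit constants from the Poincar\'e, Harnack, and boundary Harnack steps.
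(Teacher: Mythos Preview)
Your argument is correct and follows the paper's proof closely through the ACF step and the upper bound on $J(x,r)$ (same use of \eqref{eqaux*}, Caccioppoli, and Lemmas \ref{lem:bound on harmonic measure corkscrew point}--\ref{lem:density of harmonic measure unif bounded}). The genuine difference is in the lower bound on $J(x,\rho)$. The paper uses the identity $\int \varphi\, d\omega_i = -\int \nabla\varphi\cdot\nabla u_i$ for a smooth bump $\varphi$ supported in $B(x,\rho)$, followed by Cauchy--Schwarz, which directly links $\omega_i(B(x,\rho/2))$ to $\|\nabla u_i\|_{L^2(B(x,\rho))}$ without any further NTA geometry. Your route instead goes through interior estimates: corkscrew plus Harnack to force $G_i\gtrsim \rho\,\theta_i(x,\rho)$ on an interior ball, then a capacitary Poincar\'e inequality (exterior corkscrew supplying the capacity lower bound) to convert this into a gradient lower bound. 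Both give the same estimate with constants depending only on the CAD character; the paper's test-function argument is shorter and does not invoke Maz'ya-type Poincar\'e, while yours makes the role of the two-sided corkscrew geometry more explicit. Note that your Poincar\'e step implicitly uses that $G_i$ extended by zero lies in $W^{1,2}_{\loc}$ across $\partial\Omega_i$, which the paper justifies via Wiener regularity of the $n$-ADR boundary; you should make that point explicit.
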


\begin{proof}
	For $i=1,2$ denote by $g_i$ the Green function for $\Omega_i$ and define the functions $u_i(y)=g_i(y,p_i)$, where we take $u_i$ to be zero outside of $\Omega_i$. Since the boundaries $\partial\Omega_i$, with $i=1,2$, are $n$-ADR sets, it follows that the domains $\Omega_i$ are Wiener regular. This
     guarantees that the functions $u_i$ are continuous away from $p_i$, for $i=1,2$. For all $x\in \mathbb{R}^{n+1}\setminus \left(\Omega_1\cup \Omega_2\right)$ and $t\in (2\delta_x,\frac{1}{4}\min_i(\dist(x,p_i)))$ the  Alt-Caffarelli-Friedman monotonicity formula (Theorem \ref{teoACF-elliptic}, \eqref{eqprec1}) yields
	\begin{align*}
		\frac{\partial_rJ(x,t)}{J(x,t)}\geq \frac{2}{t}(\alpha_1(x,t)+\alpha_2(x,t)-2),
	\end{align*}
	where 
	\begin{equation*}
		J(x,t)=\left(\frac{1}{t^2}\int_{B(x,t)}\frac{|\nabla u_1(y)|^2}{|y-x|^{n-1}}dy\right)\left(\frac{1}{t^2}\int_{B(x,t)}\frac{|\nabla u_2(y)|^2}{|y-x|^{n-1}}dy\right).
	\end{equation*}
 
	Fix $x\in \mathbb{R}^{n+1}\setminus(\Omega_1\cup \Omega_2)$ and $r\in (2\delta_x,\frac{1}{4}\min_i(\dist(x,p_i)))$. Then for $\rho\in (2\delta_x,r)$, 
	\begin{equation}\label{e: ACF bound on alphas}
		\int_{\rho}^{r}\frac{\alpha_1(x,t)+\alpha_2(x,t)-2}{t}\,dt\leq \int_{\rho}^{r}\frac{\partial_rJ(x,t)}{J(x,t)}\,dt
		=\log\left(\frac{J(x,r)}{J(x,\rho)}\right).
	\end{equation}

	Since $J(x,t)$ is increasing, we have $J(x,\rho)\leq J(x,r)$, and thus $1\leq \frac{J(x,r)}{J(x,\rho)}$. In particular, the right hand side of \eqref{e: ACF bound on alphas} is always nonnegative. 
    
    Let us first bound the numerator, $J(x,r)$. From \eqref{eqaux*} and the Caccioppoli inequality we obtain
	\begin{align}\label{e: initial bound on J(x,r)}
		\begin{split}
			J(x,r)
			&\lesssim_n \left(\frac{1}{r^2}\fint_{B(x,2r)}(u_1)^2dy \right)\left(\frac{1}{r^2}\fint_{B(x,2r)}(u_2)^2dy \right).
		\end{split} 
	\end{align}
	For a more detailed computation see \cite[Section 3]{KPT09}. From Lemmas \ref{lem:bound on harmonic measure corkscrew point} and \ref{lem:density of harmonic measure unif bounded} and the doubling property of $\omega_i$ (see \cite[Lemma 4.9]{JK82}), we have
	\begin{equation*}
		\frac{1}{r^2}\fint_{B(x,2r)}(u_i)^2 dy
		\lesssim_n \left(\frac{\omega_i(B(x,r))}{r^{n}}\right)^2,
	\end{equation*}
	and thus continuing the estimate in \eqref{e: initial bound on J(x,r)} gives 
	\begin{equation}\label{e: final bound on J(x,r)}
		J(x,r)\lesssim \left(\frac{\omega_1(B(x,r))}{r^n} \right)^2\left(\frac{\omega_2(B(x,r))}{r^n} \right)^2 = \theta_{1}(x,r)^2\,\theta_{2}(x,r)^2.
	\end{equation}
	We now lower bound for $J(x,\rho)$ for any $\rho\in (2\delta_x,r)$. Let $\varphi_{x,\rho}$ be a $C^{\infty}(\mathbb{R}^{n+1})$ bump function satisfying
	\begin{equation}\label{e: auxiliary phi}
		\chi_{B(x,\rho/2)}\leq \varphi_{x,\rho}\leq \chi_{B(x,\rho)}\quad \text{with}\quad \|\nabla \varphi_{x,\rho}\|_{\infty}\lesssim \rho^{-1}.
	\end{equation}
	By integration by parts, properties of the Green's function, and H\"{o}lder's inequality,
	\begin{equation*}
		\omega_i(B(x,\rho/2))\leq \int \varphi_{x,\rho}d\omega_i=-\int \nabla\varphi_{x,\rho}\nabla u_i dy\leq \|\nabla \varphi_{x,\rho}\|_{L^2(B(x,\rho))}\|\nabla u_i\|_{L^2(B(x,\rho))}.
	\end{equation*}
	We now continue this estimate and use \eqref{e: auxiliary phi} to obtain
	\begin{align*}
		\omega_i(B(x,\rho/2))& \lesssim_n \rho^{\frac{n-1}{2}}\left(\int_{B(x,\rho)}\frac{\rho^{n-1}}{|x-y|^{n-1}}|\nabla u_i|^2dy\right)^{1/2}  \\
        &
		\approx_n \rho^{n}\left(\frac{1}{\rho^2}\int_{B(x,\rho)}\frac{|\nabla u_i|^2}{|x-y|^{n-1}}dy\right)^{1/2}.
	\end{align*}
	Together with the doubling property of $\omega_i$ gives the following lower bound on $J(x,\rho)$,
	\begin{equation}\label{e: final bound on J(x,rho)}
		\left(J(x,\rho)\right)^{1/2}\gtrsim \frac{\omega_1(B(x,\rho))}{\rho^n}\cdot \frac{\omega_2(B(x,\rho))}{\rho^n}\gtrsim_n \theta_1(x,\rho)\cdot \theta_2(x,\rho).
	\end{equation}
	
	Combining \eqref{e: final bound on J(x,r)} and \eqref{e: final bound on J(x,rho)} yields
	\begin{align}\label{e: bound by sum of logs}
		\int_{\rho}^{r}\frac{\alpha_1(x,t)+\alpha_2(x,t)-2}{t}\,dt
		&\lesssim\log\left(\frac{\theta_{1}(x,r)}{\theta_1(x,\rho)}\right) +\log\left(\frac{\theta_{2}(x,r)}{\theta_2(x,\rho)}\right)+1.
	\end{align}
\end{proof}

\begin{lemma}\label{lem2}
Let $\Omega$ be a bounded chord-arc domain and let $p\in\Omega$ be such be such that $\dist(p,\pom)\geq c_1 \diam(\pom)$. Denote by $\omega$ 
the harmonic measure for $\Omega$ with respect to the pole $p$. Then, for any ball $B$ centered in $\pom$ with radius $0<r(B)\leq \diam(\pom)$ and any Borel function $\rho:\partial\Omega\cap B\to (0,r(B))$,
we have
$$\int_{\pom \cap B} \log\left(\frac{\theta_{\omega}(x,r(B))}{\theta_{\omega}(x,\rho(x))}\right)\,d\sigma(x) \lesssim r(B)^n,$$
where the implicit constant depends on $c_1$ and the chord-arc character of $\Omega$.
\end{lemma}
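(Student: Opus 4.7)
The plan is to exploit that harmonic measure on a chord-arc domain satisfies $\omega\in A_\infty(\sigma)$, with constants depending only on $c_1$ and the chord-arc character of $\Omega$ (see, e.g., \cite{DJ90,Semmes90}, or cf.\ Lemmas \ref{lem:bound on harmonic measure corkscrew point}--\ref{lem:density of harmonic measure unif bounded} combined with the David--Jerison machinery). Set $k=d\omega/d\sigma$ and $f=\log k$. By the Coifman--Rochberg characterisation, $A_\infty$ is equivalent to the reverse Jensen inequality
$$\frac{1}{\sigma(B(x,r))}\int_{B(x,r)\cap\pom}k\,d\sigma \;\leq\; C\exp\!\left(\frac{1}{\sigma(B(x,r))}\int_{B(x,r)\cap\pom}\log k\,d\sigma\right),$$
uniformly for all $x\in\pom$ and $r>0$. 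Combined with ordinary Jensen and the $n$-ADR of $\sigma$, this yields $f_{B(x,r)} \leq \log\theta_\omega(x,r) \leq f_{B(x,r)}+C$; moreover $f\in\mathrm{BMO}(\sigma)$ with norm controlled by the same data. Hence
$$\int_{B\cap\pom}\log\frac{\theta_\omega(x,r(B))}{\theta_\omega(x,\rho(x))}\,d\sigma(x) \;\leq\; C\sigma(B) \;+\; \int_{B\cap\pom}\!\bigl[f_{B(x,r(B))}-f_{B(x,\rho(x))}\bigr]\,d\sigma(x),$$
and it suffices to bound the last integral by $Cr(B)^n$.

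Using Fubini I will rewrite each average as an integral of $f$ against a positive kernel supported in $2B$: there exist $g_1,g_2\geq 0$ with $\int g_i\,d\sigma=\sigma(B)$ such that
$$\int_{B\cap\pom} f_{B(x,r(B))}\,d\sigma(x) = \int f\,g_1\,d\sigma,\qquad \int_{B\cap\pom} f_{B(x,\rho(x))}\,d\sigma(x) = \int f\,g_2\,d\sigma.$$
Since $\int(g_1-g_2)\,d\sigma=0$, the difference equals $\int(f-f_{2B})(g_1-g_2)\,d\sigma$, and is therefore bounded in absolute value by $\int|f-f_{2B}|(g_1+g_2)\,d\sigma$. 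The kernel $g_1$ is uniformly bounded by $n$-ADR (the numerator contributes at most $\sigma(B)\approx r(B)^n$ of mass and the denominators satisfy $\sigma(B(x,r(B)))\approx r(B)^n$), hence the $g_1$-piece is controlled by the standard estimate $\int_{2B}|f-f_{2B}|\,d\sigma\lesssim \|f\|_{\mathrm{BMO}}\,r(B)^n$.

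The main obstacle is the $g_2$-piece: $g_2$ is typically unbounded where $\rho$ is small, since $1/\sigma(B(x,\rho(x)))\approx \rho(x)^{-n}$ blows up. I plan to circumvent this by observing that $g_2 = T\chi_B$, where $T$ is the integral operator whose formal adjoint is the averaging $T^\ast g(x) = g_{B(x,\rho(x))}$; because $|T^\ast g|\leq Mg$ pointwise (Hardy--Littlewood maximal function on the doubling measure space $(\pom,\sigma)$) and $M$ is bounded on every $L^p(\sigma)$ with $p>1$, duality yields
$$\|g_2\|_{L^s(\sigma)} \;\lesssim_s\; \sigma(B)^{1/s} \qquad \text{for every } s>1,$$
uniformly in the Borel function $\rho$. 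Combining this with John--Nirenberg's $L^{s'}$-bound $\|f-f_{2B}\|_{L^{s'}(2B)} \lesssim \|f\|_{\mathrm{BMO}}\,\sigma(2B)^{1/s'}$ through H\"older's inequality gives $\int|f-f_{2B}|\,g_2\,d\sigma \lesssim \|f\|_{\mathrm{BMO}}\,r(B)^n$, and assembling the pieces completes the argument.
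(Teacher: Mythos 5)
Your argument is correct, and it reaches the conclusion by a genuinely different route than the paper. The paper's proof rewrites $\theta_\omega(x,r(B))/\theta_\omega(x,\rho(x))$ as an $\omega$-average of $\theta_\omega(x,r(B))\,\tfrac{d\sigma}{d\omega}$ over $B(x,\rho(x))$, bounds this by the $\omega$-maximal function $M_\omega f(x)$ of a single (essentially $x$-independent) function $f=\theta_\omega(\cdot,r(B))\,\tfrac{d\sigma}{d\omega}\,\chi_{B(\cdot,r(B))}$, linearises the logarithm through $\log^+ t\lesssim t^q$ with $q=s'-1>0$, and closes with H\"older, the $L^{s'}(\omega)$-boundedness of $M_\omega$, and the reverse H\"older inequality that quantifies $\sigma\in A_\infty(\omega)$. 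You instead encode the $A_\infty$ relation as $f:=\log(d\omega/d\sigma)\in\mathrm{BMO}(\sigma)$ via the exponential (``reverse Jensen'') characterisation of $A_\infty$, which is due to Hru\v s\v cev and Garc\'ia-Cuerva--Rubio de Francia rather than Coifman--Rochberg (who treat the $A_1$ class), a harmless misattribution. You then replace each $\log\theta_\omega(x,t)$ by the $\sigma$-average $f_{B(x,t)}$ up to an $O(1)$ error using ordinary and reverse Jensen together with $n$-ADR, and reduce the task to the self-cancelling expression $\int(f-f_{2B})(g_1-g_2)\,d\sigma$. The only delicate point, the uniform-in-$\rho$ control of $g_2$, is dispatched by the duality remark $\|g_2\|_{L^s(\sigma)}\lesssim\sigma(B)^{1/s}$, which indeed follows from $|T^{*}h|\leq Mh$ and the $L^{s'}(\sigma)$-boundedness of the Hardy--Littlewood maximal operator on the doubling space $(\pom,\sigma)$; John--Nirenberg then finishes. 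Both proofs rest on the same analytic engine, namely $A_\infty$ plus a maximal-function bound, but your formulation makes visible that the only facts used are ``log of the weight is BMO'' and the $L^p$-boundedness of $M$, and it handles the freedom in $\rho$ entirely through a kernel estimate; the paper's version is shorter and avoids the Fubini/kernel step by putting all the weight information into a single reverse H\"older inequality.
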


Remark that, for a chord-arc domain $\Omega$, the limit in the above definition of $\theta_{\omega}(x,0)$ exists $\sigma$-a.e.\ in $\partial\Omega$ because  $\partial\Omega$ is $n$-rectifiable and $\omega$ is absolutely continuous with respect to $\sigma$.
\vv

\begin{proof}[Proof of Lemma \ref{lem2}] Let $d=r(B)$ and $\theta(x,r):=\theta_{\omega}(x,r)$.
	For $x\in \partial \Omega$  we have
	\begin{align*}\label{e: first bound on log}
		\begin{split}
            \frac{\theta(x,d)}{\theta(x,\rho(x))}
			=\frac{1}{\omega(B(x,\rho(x)))}\int_{B(x,\rho(x))}\theta(x,d)\,\frac{d\sigma}{d\omega}\, d\omega
		\end{split}
	\end{align*}
	where $\frac{d\sigma}{d\omega}$ is the Radon-Nikodym derivative of $\sigma$ with respect to $\omega$.
	Let 
	\begin{equation}\label{e: definition of f^+}
		f:=\theta(x,d)\,\frac{d\sigma}{d\omega}\,\chi_{B(x,d)}, 
	\end{equation}
	so that
	\begin{equation}\label{e: bound by HL max op}
		\frac{\theta(x,d)}{\theta(x,\rho(x))}\leq \frac{1}{\omega(B(x,\rho(x)))}\int_{B(x,\rho(x))}f\, d\omega \leq M_{\omega}f(x), 
	\end{equation}
	where $M_{\omega}f(x)=\sup_{r>0}\frac{1}{\omega(B(x,r))}\int_{B(x,r)}fd\omega$.

	Observe that $\omega$ is doubling and since $\sigma\in A_{\infty}(\omega)$, there exists $s^{\prime}>1$ such that the following reverse H\"{o}lder inequality holds: 
	\begin{equation}\label{e: RHI with sprime}
		\frac{1}{\omega(B)}\int_{B}f^{s^{\prime}}d\omega
		\leq \left(\frac{c}{\omega(B)}\int_{B}fd\omega\right)^{s^{\prime}}. 
	\end{equation}
    Now for $q=s^{\prime}-1>0$, by \eqref{e: bound by HL max op} and the fact that $\log_+(t)\lesssim|t|^q$,  
        \begin{align*}
            \int_B \log^+\left(\frac{\theta(x,d)}{\theta(x,\rho)}\right)d\sigma 
            &\leq \int_{B}\log^+ \left(M_{\omega}f\right)d\sigma\\
		&\lesssim \int_{B}\left(M_{\omega}f\right)^q\frac{d\sigma}{d\omega}\,d\omega\\
        & 
		\leq \left(\int_{B}(M_{\omega}f)^{qs}d\omega\right)^{1/s}\left(\int_{B}\left(\frac{d\sigma}{d\omega}\right)^{s^{\prime}}d\omega\right)^{1/s^{\prime}},
        \end{align*}
        where $s'$ is the conjugated exponent of $s$. Then, since
         $qs=s^{\prime}$, 
	\begin{align*}
		\int_B \log^+\left(\frac{\theta(x,d)}{\theta(x,\rho)}\right)d\sigma &\lesssim \left(\int_{B}f^{qs}d\omega\right)^{1/s}\left(\int_{B}\left(\frac{d\sigma}{d\omega}\right)^{s^{\prime}}d\omega\right)^{1/s^{\prime}}\\
		&=\left(\int_{B}f^{s^{\prime}}d\omega\right)^{1/s}\left(\int_{B}\left(\frac{d^n}{\omega(B)}f\right)^{s^{\prime}}d\omega\right)^{1/s^{\prime}}\\
		&=\frac{d^n}{\omega(B)}\left(\int_{B}f^{s^{\prime}}d\omega\right)^{1/s}\left(\int_{B}f^{s^{\prime}}d\omega\right)^{1/s^{\prime}}\\
		&=\frac{d^n}{\omega(B)} \int_{B} f^{s^{\prime}}d\omega.
	\end{align*}
	 The result then follows from \eqref{e: definition of f^+} \eqref{e: RHI with sprime}.
\end{proof}

Remark that from Lemma \ref{lem: bounding by logs CAD} and \ref{lem2} it follows that Theorem \ref{t:main2} (3) holds for the particular case of chord-arc domains.

\section{Corona decomposition into Lipschitz subdomains}\label{s:corona}

In this section we finish the proof of Theorem \ref{t:main2}. Recall: the only task left was to show that $(1) \implies (3)$. The plan, then, is to construct a multiscale decomposition of $\Omega_1$ and $\Omega_2$ into Lipschitz subdomains. These domains are in particular CAD, and therefore we will be in the position to apply the estimates proven in the previous section.

\subsection{The corona decomposition using Lipschitz subdomains}\label{secorona}

 In this section we assume that $\Omega^+\equiv\Omega_1\subset\R^{n+1}$ is a two-sided corkscrew open set with uniformly $n$-rectifiable
boundary. We denote $\Omega^-=\Omega_2=\R^{n+1}\setminus\overline{\Omega^+}$ and we let $\sigma=\HH^n|_{\partial\Omega^+}$.

\vv

\subsubsection{The approximating Lipschitz graph}\label{subs:approxLipgraph}

In this subsection we describe how to associate an approximating Lipschitz graph to a cube $Q\in \DD_\sigma$, assuming $b\beta_{\sigma}(k_1Q)$ to be small enough for some big constant $k_1>20$ {(where we denoted $b\beta_\sigma\equiv b\beta_{\supp\sigma}$).}
We will follow 
the arguments in \cite{MT21} quite closely, which in turn are based on \cite[Chapters 7, 8, 12, 13, 14]{DS91}.
The first step consists in defining suitable stopping cubes.

Given $x\in\R^{n+1}$, we write $x= (x',x_{n+1})$.
For a given cube $Q\in\DD_\sigma$, we denote by $L_Q$ a best approximating hyperplane for
 $b\beta_\sigma(k_1Q)$.
We also assume, without loss of generality, that 
$$
L_Q \,\,\textup{is the horizontal  hyperplane}\,\, \{x_{n+1}=0\}.
$$ 
We denote by ${C}(Q)$ the cylinder
$${C}(Q) = \big\{x\in \R^{n+1}: |x'-(x_Q)'|\leq 10\,\ell(Q),\,
{|x_{n+1}-(x_Q)_{n+1}| }\leq 10\,\ell(Q))\big\}.$$
Observe that $C(Q)\subset 20B_Q$.

We fix $0<\ve\ll\delta\ll1$ to be chosen later (depending on the corkscrew condition and the uniform rectifiability constants), $k_1>20$, and we denote by $\cB$ or $\cB(\ve)$ the family of cubes $Q\in
\DD_\sigma$ such that 
$b\beta_\sigma(k_1Q) > \ve$. For a given cube 
 $Q\in\DD_\sigma$ such that $b\beta_\sigma(k_1Q)\leq \ve$, we let $\sss(Q)$ be the family of maximal cubes $P\in\DD_\sigma$ which are contained in $k_1Q$ and such that at least one of the following holds:
\begin{itemize}
\item[(a)] {$P\cap C(Q) = \varnothing$.}
\item[(b)] $P\in\cB(\ve)$, i.e., $b\beta_\sigma(k_1P) > \ve$.
\item[(c)] $\angle(L_P,L_Q)> \delta$, where $L_P$, $L_Q$ are best approximating hyperplanes for $\beta_{\sigma,\infty}(k_1P)$ and $\beta_{\sigma,\infty}(k_1Q)$, respectively, and {$\angle(L_P,L_Q)$ denotes the angle between $L_P$ and $L_Q$.}
\end{itemize}
We denote by $\tree(Q)$ the family of cubes in $\DD_\sigma$ which are contained in $k_1Q$ and which are not strictly contained
in any cube from $\sss(Q)$. We also consider the function
$$d_Q(x) = \inf_{P\in\tree(Q)} \big(\dist(x,P) + \diam(P)\big).$$
Notice that $d_Q$ is $1$-Lipschitz.
Assuming $k_1$ big enough (but independent of $\ve$ and $\delta$) and arguing as in the proof of  \cite[Proposition 8.2]{DS91}, the following holds:

\begin{lemma}\label{lemgraf}
Denote by $\Pi_Q$ the orthogonal projection on $L_Q$.
There is a Lipschitz function $A:L_Q \to L_Q^\bot$ with slope at most $C\delta$ such that
$$\dist(x,(\Pi_Q(x),A(\Pi_Q(x)))) \leq C_1\ve\,d_Q(x)\quad \mbox{ for all $x\in 20B_Q$.}$$
\end{lemma}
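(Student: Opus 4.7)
The plan is to build $A$ via a partition of unity on $L_Q$ that interpolates between the affine functions whose graphs are the local approximating hyperplanes $L_P$, $P \in \tree(Q)$. Since $d_Q$ is $1$-Lipschitz, I would first project it to a scale function $D(y) := \inf_{P \in \tree(Q)}\bigl(\dist(y,\Pi_Q(x_P)) + \ell(P)\bigr)$ on $L_Q$, and take a Whitney-type cover of $\Pi_Q(20 B_Q)$ by balls $B_i = B(y_i,r_i) \subset L_Q$ with $r_i \approx D(y_i)$, bounded overlap, and a subordinate partition of unity $\{\varphi_i\}$ satisfying $|\nabla \varphi_i| \lesssim 1/r_i$.

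To each $B_i$ I attach an affine function $A_i : L_Q \to L_Q^\perp$ as follows. Select a cube $P_i \in \tree(Q)\setminus \sss(Q)$ with $\ell(P_i) \approx r_i$ and $\dist(x_{P_i}, \Pi_Q^{-1}(y_i)) \lesssim r_i$ (if the closest candidate cube to $y_i$ happens to lie in $\sss(Q)$, take its parent, which by maximality automatically belongs to $\tree(Q)\setminus\sss(Q)$ and hence satisfies all three good properties). Then $b\beta_\sigma(k_1 P_i) \leq \ve$ and $\angle(L_{P_i}, L_Q) \leq \delta$; let $A_i$ be the affine function whose graph is $L_{P_i}$, so that $|\nabla A_i| \leq C\delta$. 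Set
\begin{equation*}
	A(y) := \sum_i \varphi_i(y)\, A_i(y).
\end{equation*}

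Two key technical estimates are then needed. First, a compatibility bound $|A_i(y) - A_j(y)| \lesssim \ve\, r_i$ whenever $B_i \cap B_j \neq \varnothing$, which holds because both $L_{P_i}$ and $L_{P_j}$ approximate $\supp \sigma$ to within $C\ve r_i$ on a common enlargement (thanks to $b\beta_\sigma(k_1 P_\bullet) \leq \ve$), so they cannot disagree by more on that overlap. Second, a Lipschitz bound on $A$: since $\sum_i \nabla \varphi_i \equiv 0$, fixing $i_0$ with $\varphi_{i_0}(y) \neq 0$,
\begin{equation*}
	|\nabla A(y)| \leq \sum_i \varphi_i(y)\,|\nabla A_i(y)| + \Bigl|\sum_i \nabla \varphi_i(y)(A_i(y) - A_{i_0}(y))\Bigr| \leq C\delta + C\ve \leq C\delta,
\end{equation*}
using the compatibility bound on each overlap. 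The distance estimate then follows: setting $y = \Pi_Q(x)$, the point $x$ lies within $C\ve r_i$ of $L_{P_i}$ for each $i$ with $\varphi_i(y) \neq 0$, and $A(y)$ is a convex combination of the $A_i(y)$, so the vertical distance $\dist(x,(y,A(y)))$ inherits the bound $C_1 \ve\, d_Q(x)$ after invoking $r_i \approx D(y) \approx d_Q(x)$.

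The principal obstacle is the bookkeeping in the matching between Whitney balls in $L_Q$ and cubes in $\tree(Q)$: one must show that at each scale the approximation provided on $k_1 B_{P_i}$ by $L_{P_i}$ transfers to a $C\ve r_i$-approximation on neighbouring Whitney balls, and that enough cubes of the right scale exist at every location down to $\sss(Q)$ to cover $\Pi_Q(20 B_Q)$. This is exactly where the three stopping conditions defining $\sss(Q)$ cooperate: the parent of any stopping cube still lies in $\tree(Q)$ with $b\beta_\sigma(k_1\cdot)\leq \ve$ and $\angle(\cdot,L_Q)\leq \delta$, providing good local data at every active scale, while stopping of type (a) controls the parts of $k_1 Q$ far from the cylinder $C(Q)$. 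One also needs to fix $k_1$ large (independently of $\ve,\delta$) so that $C(Q)$ sits comfortably inside $k_1 B_Q$ and the enlarged balls $k_1 B_{P_i}$ overlap enough to transfer $\beta$ information across adjacent scales. The whole argument follows the lines of \cite[Proposition 8.2]{DS91} and its adaptation in \cite{MT21}.
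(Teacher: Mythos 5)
Your construction is exactly the one the paper invokes by reference: the paper gives no proof of Lemma \ref{lemgraf} beyond citing \cite[Proposition 8.2]{DS91} (and its adaptation in \cite{MT21}), and your partition-of-unity gluing of the affine maps $A_i$ attached to good tree cubes at scale $D(y)$, with the $\ve$-compatibility estimate on overlaps and the $\sum_i\nabla\varphi_i\equiv 0$ trick for the slope bound, is precisely that argument. The only caveat is inherited from the paper's loose statement: your final step establishes the distance bound for $x\in 20B_Q\cap\supp\sigma$, which is the version that is actually true and the only one used later (for $x\in\partial\Omega^+\cap C(Q)$ in Lemma \ref{lem333}).
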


In this lemma, and in the whole subsection, we assume that {$Q$ is} as 
above, so that, in particular, $b\beta_\sigma(k_1Q)\leq \ve$.

We denote
$$D_Q(x)= \inf_{y\in\Pi_Q^{-1}(x)}d_Q(y).$$
It is immediate to check that $D_Q$ is also a $1$-Lipschitz function. Further, as shown in \cite[Lemma
8.21]{DS91}, there is some fixed constant $C_2$ such that
\begin{equation}\label{eqDR}
C_2^{-1}d_Q(x) \leq D_Q(x) \leq d_Q(x)\quad \mbox{ for all $x\in 20B_Q\cap \pom^+$.}
\end{equation}

We denote by $Z(Q)$ the set of points $x\in Q$ such that {$d_Q(x)=0$.}
The following lemma is an immediate consequence of {the} results obtained in  \cite[Chapters 7, 12-14]{DS91}. See also Lemma 3.2 from \cite{MT21}. 

\begin{lemma}\label{lempack1}
There are some constants $C_3(\ve,\delta)>0$ and $k\geq 2$ such that
\begin{equation}\label{eqlempack1}
\sigma(Q) \approx \sigma(C(Q))\leq 2\,\sigma(Z(Q)) + 2\!\sum_{P\in\sss(Q)\cap\cB(\ve)} \!\sigma(P) + C_3\! \sum_{P\in\tree(Q)} \!\beta_{\sigma,1}(k_1P)^2\,\sigma(P).
\end{equation}
\end{lemma}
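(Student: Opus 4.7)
The plan is to decompose $\sigma(C(Q))$ along the stopping-time partition induced by $\sss(Q)$ and $Z(Q)$, discard the contribution of type-(a) cubes, identify type-(b) cubes with the middle right-hand term, and bound the total mass of type-(c) cubes by the $\beta_{\sigma,1}^2$-packing term over $\tree(Q)$.

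I would first establish $\sigma(Q)\approx \sigma(C(Q))$. Since $b\beta_\sigma(k_1Q)\leq \ve$ and $L_Q=\{x_{n+1}=0\}$, every point of $\supp\sigma\cap 20B_Q$ lies within vertical distance $\ve k_1\ell(Q)$ of $L_Q$; for $\ve$ small compared to $k_1$, this forces $\supp\sigma\cap 20B_Q\subset C(Q)$, and Ahlfors regularity then gives the two-sided comparison. Next, by maximality of $\sss(Q)$,
\begin{equation*}
k_1Q\cap \supp\sigma \;=\; Z(Q)\,\cup\,\bigsqcup_{P\in\sss(Q)}P.
\end{equation*}
Intersecting with $C(Q)$ removes all type-(a) cubes (since these satisfy $P\cap C(Q)=\varnothing$), while type-(b) cubes lie in $\sss(Q)\cap\cB(\ve)$ and are collected into the middle sum; the numerical factors of $2$ on the right-hand side absorb the Ahlfors overhead in $\sigma(Q)\approx \sigma(C(Q))$. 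It only remains to bound $\sum_{P\in\sss_c(Q)}\sigma(P)$, where $\sss_c(Q)$ consists of those $P\in\sss(Q)$ stopped purely because $\angle(L_P,L_Q)>\delta$.

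The type-(c) estimate is the classical geometric-lemma argument of \cite[Chapters 12--14]{DS91}. The key input is that for $R\subset R'$ in $\tree(Q)$ with comparable side length, the approximating hyperplanes satisfy $\angle(L_R,L_{R'})\lesssim \beta_{\sigma,1}(k_1R)+\beta_{\sigma,1}(k_1R')$, which uses crucially that all ancestors of $P$ inside $\tree(Q)$ have small $b\beta$ and are not stopped. Telescoping along the chain of ancestors of each $P\in\sss_c(Q)$,
\begin{equation*}
\delta \,<\, \angle(L_P,L_Q) \,\lesssim\, \sum_{\substack{R\in\tree(Q)\\ P\subset R}} \beta_{\sigma,1}(k_1R).
\end{equation*}
Squaring via Cauchy--Schwarz and invoking the standard Carleson aggregation (each $R\in\tree(Q)$ is charged by descendants of total $\sigma$-mass $\lesssim \sigma(R)$) yields
\begin{equation*}
\sum_{P\in\sss_c(Q)} \sigma(P) \,\leq\, \frac{C(\ve)}{\delta^2}\sum_{R\in\tree(Q)} \beta_{\sigma,1}(k_1R)^2\,\sigma(R),
\end{equation*}
which combined with the decomposition above gives \eqref{eqlempack1} with $C_3=C_3(\ve,\delta)$.

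The main obstacle is the angle-to-$\beta_1$ comparison together with its subsequent squaring. Upgrading the naive ``angle $\lesssim \beta_\infty$'' bound to a bound in terms of $\beta_{\sigma,1}$ is delicate because $\beta_1$ permits rare large deviations, so one needs the blanket smallness $b\beta_\sigma\leq \ve$ on $\tree(Q)$ to rule out outliers. Converting the resulting first-power telescoping estimate into a square-summable packing without inflating the implicit constants is the heart of David--Semmes' combinatorial argument, which I would import rather than reprove.
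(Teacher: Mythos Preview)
Your proposal is correct and matches the paper's treatment: the paper does not prove this lemma at all but simply cites it as an immediate consequence of \cite[Chapters 7, 12--14]{DS91} (see also \cite[Lemma 3.2]{MT21}), which is precisely the argument you have outlined. If anything, your sketch is more explicit than the paper's own account, and your identification of the type-(c) angle-to-$\beta_1$ telescoping plus Carleson aggregation as the substantive step is exactly the content being imported from David--Semmes.
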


\subsubsection{The Lipschitz  subdomains $\Omega_Q^\pm$}\label{subs:star-Lip}

 Abusing notation,  we write below 
 $$
 D_Q(x')=D_Q(x),\quad\textup{for}\,\,x=(x',x_{n+1}).
 $$

\begin{lemma}\label{lem333}
Let $$U_Q = \{x\in C(Q): x_{n+1}> A(x') + C_1C_2\ve D_Q(x')\},$$
 $$V_Q = \{x\in C(Q): x_{n+1}< A(x') - C_1C_2\ve D_Q(x')\}.$$ Then one of the sets $U_Q$, $V_Q$ is contained in $\Omega^+$ and the other in $
\Omega^-$.
\end{lemma}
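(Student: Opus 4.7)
The plan is to establish three facts in sequence, from which the lemma follows immediately:
\begin{enumerate}
\item[(i)] $(U_Q\cup V_Q)\cap\partial\Omega^+=\varnothing$;
\item[(ii)] both $U_Q$ and $V_Q$ are path-connected, hence each is contained in one of $\Omega^{\pm}$;
\item[(iii)] they cannot both lie in the same one of $\Omega^{\pm}$.
\end{enumerate}

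For (i), fix $x\in\partial\Omega^+\cap C(Q)\subset 20B_Q\cap\partial\Omega^+$. Lemma \ref{lemgraf} gives
$$|x_{n+1}-A(x')|\leq\dist\bigl(x,(x',A(x'))\bigr)\leq C_1\ve\, d_Q(x),$$
and by \eqref{eqDR}, $d_Q(x)\leq C_2 D_Q(x')$. Thus $|x_{n+1}-A(x')|\leq C_1C_2\ve\, D_Q(x')$, so $x$ lies in neither $U_Q$ nor $V_Q$.

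For (ii), observe that $F^{\pm}(x'):=A(x')\pm C_1C_2\ve D_Q(x')$ are Lipschitz in $x'$. Applying Lemma \ref{lemgraf} at $x_Q\in\partial\Omega^+$ yields $|A((x_Q)')-(x_Q)_{n+1}|\leq C_1\ve\, d_Q(x_Q)\leq C_1\ve\,\ell(Q)$; combined with $\lip(A)\leq C\delta$ and the obvious estimate $D_Q(x')\leq 11\ell(Q)$ valid for $x'$ in the horizontal base of $C(Q)$, this implies, for $\ve,\delta$ sufficiently small,
$$|F^{\pm}(x')-(x_Q)_{n+1}|\leq (10C\delta+C'\ve)\,\ell(Q)<5\ell(Q)$$
for every $x'$ in the base. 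Consequently the top face of $C(Q)$ lies entirely in $U_Q$, the bottom face lies entirely in $V_Q$, and every vertical slice of $U_Q$ (resp.\ $V_Q$) is a non-empty interval joining the top (resp.\ bottom) face. Both sets are therefore path-connected. Together with (i) and the decomposition $\R^{n+1}\setminus\partial\Omega^+=\Omega^+\sqcup\Omega^-$, this forces each of $U_Q$, $V_Q$ to lie entirely in one of $\Omega^{\pm}$.

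For (iii), suppose for contradiction that $U_Q\cup V_Q\subset\Omega^+$. Then $\Omega^-\cap C(Q)$ is contained in the strip $C(Q)\setminus(U_Q\cup V_Q)$, whose vertical thickness at $x'$ is at most $2C_1C_2\ve\, D_Q(x')\lesssim \ve\,\ell(Q)$. On the other hand, the two-sided corkscrew condition at $x_Q$ furnishes a ball $B\subset \Omega^-\cap C(Q)$ with $r(B)\gtrsim\ell(Q)$; for $\ve$ small enough (depending only on the corkscrew character of $\Omega^+$ and on the Lipschitz parameters), such a ball cannot fit inside a strip of thickness $\ve\,\ell(Q)$, a contradiction. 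The only real obstacle is parameter bookkeeping: one must verify that the standing hierarchy $\ve\ll\delta\ll 1$ can be tightened (independently of $Q$) to make (i) exclude $\partial\Omega^+$ from $U_Q\cup V_Q$ and the strip in (iii) genuinely much narrower than the corkscrew radius at $x_Q$.
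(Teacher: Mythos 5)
Your proof is correct and follows essentially the same route as the paper: trap $\partial\Omega^+\cap C(Q)$ in the thin strip between the graphs of $A(x')\pm C_1C_2\ve D_Q(x')$ via Lemma \ref{lemgraf} and \eqref{eqDR}, use connectedness of $U_Q$ and $V_Q$ to place each inside one of $\Omega^{\pm}$, and use the two-sided corkscrew condition to rule out both landing on the same side (just make sure to invoke the \emph{interior} corkscrew symmetrically to exclude $U_Q\cup V_Q\subset\Omega^-$ as well). Your explicit vertical-slice argument for connectedness is a slightly more careful version of the paper's appeal to connectivity, and your thin-strip contradiction in (iii) is equivalent to the paper's direct placement of the exterior corkscrew point in $V_Q$.
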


\begin{proof}
Denote 
$$W_Q = \{x\in C(Q): A(x') - C_1C_2\ve D_Q(x') \leq x_{n+1}\leq A(x') + C_1C_2\ve D_Q(x')\}.$$
We claim that ${\partial\Omega^+}\cap C(Q) \subset W(Q)$. Indeed, we have
$\partial\Omega^+\cap C(Q)\subset \partial\Omega^+\cap B(Q) \subset Q$, by the definition of $B(Q)$.
Then, by Lemma \ref{lemgraf} and \rf{eqDR}, for all $x\in \partial\Omega^+\cap C(Q)$ we have
$$|x- (x',A(x'))|\leq C_1\ve\,d_Q(x) \leq C_1C_2\ve\,D_Q(x),$$
which is equivalent to saying that $x\in W_Q$.

Next we claim that if $U_Q\cap \Omega^+\neq \varnothing$, then $U_Q\subset \Omega^+$.
This follows from connectivity, taking into account that if $x\in U_Q\cap \Omega^+$ and $r=\dist(x,\partial U_Q)$, then $B(x,r)\subset \Omega^+$. Otherwise, there exists some point $x'\in B(x,r)\setminus
\overline \Omega^+$, and thus there exists some $x''\in{\partial\Omega^+}$ which belongs to the segment $\overline{x,x'}$. This would contradict the fact that ${\partial\Omega^+}\subset W_Q$.
The same argument works replacing $U_Q$ and/or $\Omega^+$ by $V_Q$ and/or $\R^{n+1}\setminus\overline{\Omega^+}$, and thus we deduce that any of the sets $U_Q$, $V_Q$ is contained either in $\Omega^+$ or in $\Omega^-=\R^{n+1}\setminus\overline{\Omega^+}$.

Finally suppose that one of the sets $U_Q,V_Q$, say $U_Q$, is contained in $\Omega^+$. From the two-sided
corkscrew condition we infer that there exists some exterior corkscrew point $y\in B(x_P,r(B(P))/2)\cap \Omega^-$ with $\dist(y,{\partial\Omega^+})\gtrsim r(B(P))$. So, is $\ve$ is small enough we deduce that $y\in (U_Q \cup V_Q)\cap \Omega^-$. Since $y$ cannot belong to $U_Q$, it belongs to $V_Q$, and thus $V_Q$ intersects $\Omega^-$. Then by the discussion in the previous paragraph, $V_Q\subset\Omega^-$.
\end{proof}
\vv

Suppose that $U_Q\subset \Omega^+$. For a given $\delta\in (0,1/100)$,
we denote by $\Gamma_Q^+$ the Lipschitz graph of the function 
$C(Q)\cap L_Q\ni x'\mapsto A(x') + \delta\,D_Q(x')$. Notice that this is a Lipschitz function with slope at most 
$C\delta< 1$ (assuming $\delta$ small enough). So $\Gamma_Q^+$ intersects neither the top nor the bottom faces of $C(Q)$, assuming $\ve$ small enough too.
Then we define
\begin{equation}\label{eqomega+r}
\Omega_Q^+ =\big\{x=(x',x_{n+1}) \in {\rm Int}(C(Q)): x_{n+1}> A(x') + \delta \,D_Q(x')\big\}.
\end{equation}
Observe that $\Omega_Q^+$ is a {starlike Lipschitz domain (with uniform Lipschitz character) }and that $\Omega_Q^+\subset U_Q$, assuming that $C_1C_2\ve\ll\delta$.

We define $\Gamma_Q^-$ and $\Omega_Q^-$ analogously, replacing the above function $A(x') + \delta\,D_Q(x')$ by $A(x') - \delta\,D_Q(x')$. 
From Lemma \ref{lem333} and the assumption that $C_1C_2\ve\ll\delta$,
it is immediate to check that
\begin{equation}\label{eqsep99}
 \dist(x,\partial\Omega^+)\geq \frac\delta2\,D_Q(x)\quad\mbox{ for all $x\in\Omega_Q^+\cup\Omega_Q^-$.}
\end{equation}
Without loss of generality, we will assume that $\Omega_Q^+\subset\Omega^+$ and $\Omega_Q^-\subset\Omega^-$.

\vv
\subsubsection{The corona decomposition of ${\partial\Omega^+}$}\label{subs:cordecOmega}

For any $Q\in\DD_\sigma$ we define $\nex(Q)$ as follows:
\begin{itemize}
\item If $Q\not \in \cB(\ve)$ (i.e., $b\beta_{\sigma}(k_1Q)\leq \ve$), we let $\nex(Q)$ be the family of cubes which belong to $\Ch(P)$ for some $P\in\sss(Q)\cap\DD_\sigma(Q)$.
\item If $Q\in \cB(\ve)$ (i.e., $b\beta_{\sigma}(k_1Q)> \ve$), we let $\nex(Q) = \Ch(Q)$.
\end{itemize}
Notice that the cubes from $\nex(Q)$ are contained in $Q$.

Let $R_0\in\DD_\sigma$. We define a family $\ttt(R_0)\subset \DD_\sigma(R_0)$ inductively as follows. First we set $\ttt_0(R_0)=\{R_0\}$.
Assuming $\ttt_k(R_0)$ to be defined, we set
$$\ttt_{k+1}(R_0) = \bigcup_{Q\in\ttt_k(R_0)} \nex(Q).$$
We set $$\ttt(R_0)=\bigcup_{k\geq0} \ttt_k(R_0).$$

\vv

\begin{lemma}\label{lempack2}
The family $\ttt(R_0)$ satisfies the packing condition
$$\sum_{Q\in\ttt(R_0):Q\subset R_0} \sigma(Q)\lesssim_{\ve,\delta} \sigma(R_0).$$
\end{lemma}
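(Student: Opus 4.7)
I would follow the standard David--Semmes corona paradigm, partitioning $\ttt(R_0)=\ttt^b(R_0)\sqcup\ttt^g(R_0)$, where $\ttt^b(R_0):=\ttt(R_0)\cap\cB(\ve)$ are the bad top cubes and $\ttt^g(R_0)$ the good ones. The bad contribution is immediate: since $\sigma$ is uniformly $n$-rectifiable, the family $\cB(\ve)=\{Q:b\beta_\sigma(k_1Q)>\ve\}$ satisfies a Carleson packing condition (see \cite{DS91}), whence
\[
\sum_{Q\in\ttt^b(R_0)}\sigma(Q)\;\leq\;\sum_{\substack{Q\in\cB(\ve)\\ Q\subset R_0}}\sigma(Q)\;\lesssim_\ve\;\sigma(R_0).
\]

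For the good top cubes I apply Lemma \ref{lempack1} to each $Q\in\ttt^g(R_0)$ and sum to obtain three contributions: $(\mathrm{I})=\sum_{Q}\sigma(Z(Q))$, $(\mathrm{II})=\sum_{Q}\sum_{P\in\sss(Q)\cap\cB(\ve)}\sigma(P)$, and $(\mathrm{III})=\sum_{Q}\sum_{P\in\tree(Q)}\beta_{\sigma,1}(k_1P)^2\sigma(P)$, where the outer sums run over $\ttt^g(R_0)$. For $(\mathrm{I})$, I would first observe that the sets $Z(Q)$ are pairwise disjoint: any strict descendant $Q'\in\ttt^g(R_0)$ of $Q$ sits inside some stopping cube $P^*\in\sss(Q)\cap\DD_\sigma(Q)$ (by the inductive definition of $\nex$ in the good case), and for any $x\in P^*$ one has $d_Q(x)>0$, since any cube of $\tree(Q)$ sufficiently close to $x$ with diameter smaller than $\ell(P^*)$ would have to be strictly contained in $P^*$, contradicting the defining property of $\tree(Q)$. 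Hence $Z(Q)\cap P^*=\varnothing$, so $Z(Q)\cap Z(Q')=\varnothing$, and thus $(\mathrm{I})\leq\sigma(R_0)$.

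The main technical obstacle is controlling $(\mathrm{II})$ and $(\mathrm{III})$: a fixed $P\in\DD_\sigma$ may lie in $\sss(Q)$ or $\tree(Q)$ for several good top cubes $Q$, since these families are defined using the enlarged region $k_1Q$ in place of $Q$. I would establish a bounded-overlap estimate: for each fixed $P\in\DD_\sigma$ the number of $Q\in\ttt^g(R_0)$ with $P\in\tree(Q)$ (and analogously with $P\in\sss(Q)$) is bounded by a constant $N=N(k_1,\ve,\delta)$. This follows because the constraint $P\subset k_1Q$ restricts $\ell(Q)$ to a bounded window of scales near $\ell(P)$ together with a spatial restriction on $x_Q$, and within each such scale-band the good top cubes of $\ttt^g(R_0)$ are pairwise disjoint (by the inductive construction via $\nex$ discussed in Section \ref{secorona}). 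Once this overlap bound is granted, interchanging the order of summation yields $(\mathrm{II})\lesssim_{k_1,\ve,\delta}\sum_{P\in\cB(\ve),\,P\subset c R_0}\sigma(P)\lesssim_\ve\sigma(R_0)$ by the Carleson packing of $\cB(\ve)$, and $(\mathrm{III})\lesssim\sum_{P\subset c R_0}\beta_{\sigma,1}(k_1P)^2\sigma(P)\lesssim\sigma(R_0)$ by the strong geometric lemma for $\beta_{\sigma,1}$ (both ultimately consequences of UR). Combining the bad contribution with $(\mathrm{I})+(\mathrm{II})+(\mathrm{III})$ gives the claimed packing.
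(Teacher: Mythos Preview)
Your strategy is exactly the standard one the paper has in mind (it simply defers to Lemma~3.8 of \cite{MT21}), and the treatment of the bad tops as well as of term $(\mathrm{I})$ is essentially correct --- the disjointness of the $Z(Q)$ holds $\sigma$-a.e.\ once one restricts to the $\sigma$-interior of each stopping cube $P^*$, using the small-boundaries condition of the lattice.

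The genuine gap is in your justification of the bounded-overlap claim for $(\mathrm{II})$ and $(\mathrm{III})$. The inclusion $P\subset k_1Q$ gives only the \emph{lower} bound $\ell(Q)\gtrsim_{k_1}\ell(P)$, not an upper bound, so by itself it does not confine $\ell(Q)$ to a bounded window of scales; moreover, tops in $\ttt(R_0)$ of a fixed $\DD_\sigma$-generation need not be pairwise disjoint, since different levels $\ttt_k$ of the inductive construction may well contain cubes of the same side length. What you must exploit instead is the full condition $P\in\tree(Q)$, namely that $P$ is not strictly contained in any cube of $\sss(Q)$. If $Q,Q'\in\ttt^g(R_0)$ with $Q'\subsetneq Q$ both satisfy $P\in\tree(\cdot)$, then $Q'$ lies inside some stopping cube $P^*\in\sss(Q)\cap\DD_\sigma(Q)$; the dyadic dichotomy ``$P\supseteq P^*$ or $P\cap P^*=\varnothing$'' together with $P\subset k_1Q'$ then forces $\ell(Q')\approx_{k_1}\ell(P)$, and \emph{this} is the mechanism that bounds the number of such $Q'$ along any nested chain of tops through a point of $P$. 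With this correction (and the forest structure of $\ttt(R_0)$ to handle non-nested tops) the overlap estimate goes through and your argument completes as written.
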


The proof of this lemma is standard, using \rf{lempack1} and the uniform rectifiability of ${\partial\Omega^+}$. See for example Lemma 3.8 from \cite{MT21} for a related argument.

\vv
    
\subsection{The main estimate}
Here, we will use the multiscale decomposition constructed above to transfer the good estimates that hold, by Lemma \ref{lem2} for the approximating Lipschitz (and thus CAD) domains onto $\Omega^\pm$ themselves.

Precisely, we aim to prove the following:

\begin{proposition}
Let $\Omega^+$ be a bounded two-sided corkscrew domain and let $p\in\Omega^+$ be such that $\dist(p,\pom^+)\geq c_1 \diam(\pom^+)$.
Let $\xi\in\pom^+$ and $0<r\leq \diam(\pom^+)$. Then
$$\int_{B(\xi,r)}\int_0^r a(x,t)\,\frac{dt}t \,d\sigma(x)\lesssim r^n.$$
\end{proposition}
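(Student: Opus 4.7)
As remarked at the end of Section \ref{s:CAD}, Lemmas \ref{lem: bounding by logs CAD} and \ref{lem2} together already yield the asserted Carleson-type bound when $\Omega^+$ is itself a CAD; my plan is therefore to transport those bounds onto the two-sided corkscrew setting of the proposition via the multiscale Lipschitz approximation of Section \ref{secorona}. The key observation driving this reduction is that the chord-arc subdomains $\Omega_R^\pm$ of Subsection \ref{subs:star-Lip} satisfy $\Omega_R^\pm \subset \Omega^\pm$, so the monotonicity of the first Dirichlet eigenvalue under set inclusion (cf.\ \rf{eqVV'}) gives the pointwise bound
\[
a(x, t) \;\leq\; \alpha_{R, 1}(x, t) + \alpha_{R, 2}(x, t) - 2, \qquad x \in \pom^+, \; t > 0,
\]
where $\alpha_{R, i}(x, t)$ is the characteristic constant of the rescaling of $\Omega_R^i \cap S(x, t)$ on $\mathbb{S}^n$. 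The right-hand side is then controllable by the ACF machinery of Section \ref{s:CAD} applied on each CAD $\Omega_R^\pm$.

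Concretely, I would begin by choosing $R_* \in \DD_\sigma$ with $B(\xi, r) \cap \pom^+ \subset R_*$ and $\ell(R_*) \approx r$, and running the construction of Section \ref{secorona} to produce the family of tops $\ttt(R_*)$ together with, for every $R \in \ttt(R_*)$, the associated CAD $\Omega_R^\pm$. Each pair $(x, t)$ with $x \in R_* \cap \pom^+$ and $t \in (0, \ell(R_*)]$ determines a unique $R \in \ttt(R_*)$ whose local tree $\tree(R)$ contains a cube $Q$ with $x \in Q$ and $\ell(Q) \asymp t$; for that pair, the bound on $a(x, t)$ will be derived from $\Omega_R^\pm$. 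Regrouping the double integral according to this rule,
\[
\int_{B(\xi, r) \cap \pom^+} \int_0^r a(x, t) \, \frac{dt}{t} \, d\sigma(x) \;\leq\; \sum_{R \in \ttt(R_*)} I_R,
\]
where $I_R$ collects all scales matching $\tree(R)$, the proposition reduces to proving $I_R \lesssim \sigma(R)$ and then invoking the packing Lemma \ref{lempack2}.

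To estimate an individual $I_R$ when $R \notin \cB(\ve)$, fix $x \in R \cap \pom^+$ and denote by $P(x)$ the smallest cube of $\tree(R)$ containing $x$, so the scales at $x$ attributed to $R$ form the interval $[\rho(x), \ell(R)]$ with $\rho(x) \asymp \ell(P(x))$. The separation estimate \rf{eqsep99} combined with Lemma \ref{lemgraf} gives $\max_i \dist(x, \partial \Omega_R^i) \lesssim \delta\, d_R(x) \lesssim \delta\, \rho(x)$, which verifies the hypotheses of Lemma \ref{lem: bounding by logs CAD} applied to the chord-arc pair $(\Omega_R^+, \Omega_R^-)$ (with poles $p_R^\pm$ chosen as corkscrew points for $R$ in $\Omega_R^\pm$, hence at distance $\approx \ell(R)$ from $\partial \Omega_R^\pm$). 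Coupled with the pointwise comparison above, this yields
\[
\int_{\rho(x)}^{\ell(R)} a(x, t) \, \frac{dt}{t} \;\lesssim\; \sum_{i = 1, 2} \log \frac{\theta_{R, i}(x, \ell(R))}{\theta_{R, i}(x, \rho(x))} + 1.
\]
To upgrade this pointwise estimate into an $L^1$ bound over $R \cap \pom^+$ using Lemma \ref{lem2} (which is stated for base points on $\partial \Omega_R^i$), I would project each $x$ onto $\tilde x \in \partial \Omega_R^+$ along the Lipschitz graph of Lemma \ref{lemgraf}; since $|x - \tilde x| \lesssim \delta\, \rho(x)$, the doubling of the harmonic measures $\omega_R^\pm$ gives $\theta_{R, i}(x, s) \approx \theta_{R, i}(\tilde x, s)$ at the scales $s \in \{\rho(x), \ell(R)\}$, while the Lipschitz approximation makes the pushforward of $\sigma \restr (\pom^+ \cap R)$ to $\partial \Omega_R^+$ comparable with $\HH^n \restr (\partial \Omega_R^+ \cap C(R))$. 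Applying Lemma \ref{lem2} to $\Omega_R^\pm$ then yields $I_R \lesssim \ell(R)^n \approx \sigma(R)$.

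Finally, the contribution of the bad tops $R \in \cB(\ve)$ (for which no Lipschitz subdomain is defined) I would treat by the trivial bound $a \leq 1$: such $R$ satisfy a Carleson packing coming from the uniform rectifiability of $\pom^+$ (the same packing that powers Lemma \ref{lempack1}), so their total contribution to the sum above is again $\lesssim r^n$. The main obstacle I anticipate is the careful measure-theoretic comparison between $\pom^+$ and $\partial \Omega_R^\pm$ through the horizontal projection, together with the synchronization of the cut-off scales $\rho(x)$ with the corona structure; the remaining steps are direct applications of Lemmas \ref{lem333}--\ref{lempack2} and of the CAD estimates of Section \ref{s:CAD}.
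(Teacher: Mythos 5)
Your proposal follows essentially the same route as the paper: a corona decomposition into trees rooted at cubes of side $\approx r$, the eigenvalue monotonicity $a(x,t)\leq a_Q(x,t)$ for the Lipschitz subdomains $\Omega_Q^\pm$, the ACF log-ratio bound of Lemma \ref{lem: bounding by logs CAD}, a transfer of the resulting integrals from $\pom^+$ to $\partial\Omega_Q^\pm$ via doubling of $\omega_Q^\pm$ and comparable (disjoint) surface balls, Lemma \ref{lem2} on each CAD, and the packing condition of Lemma \ref{lempack2}. The only point your sketch leaves implicit is the set $Z(Q)$ of points with $d_Q(x)=0$ (where no smallest tree cube exists and $\rho(x)=0$); the paper handles it by noting that $\theta_{\omega_Q^\pm}(x,0)$ exists $\sigma$-a.e.\ there and that Lemma \ref{lem2} admits an arbitrary Borel cut-off $\rho$.
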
 

\begin{proof}
Recall first that, by \cite{DJ90} and \cite{Semmes90}, the fact that $\Omega^+$ is two-sided corkscrew open set with $n$-Ahlfors regular boundary
implies that $\pom^+$ is uniformly $n$-rectifiable.

Denote by $I_{\xi,r}$ the family of  cubes from $\DD_\sigma$ which intersect $B(\xi,r)$ having side length at most $8r$ and such that moreover
they are maximal. Observe that this implies that their side length is at least $4r$.
Since the cubes from $I_{\xi,r}$ have side length comparable to $r$, it follows easily that $\# I_{\xi,r}\lesssim1$.

For each $R\in I_{\xi,r}$ we consider the family $\ttt(R)$ constructed in the preceding section. Then, for any $x\in R\in I_{\xi,r}$, we have
$$\int_0^r a(x,t)\,\frac{dt}t = \sum_{Q\in\ttt(R):x\in Q} \int_{\ell_Q(x)}^{\ell(Q)} a(x,t)\,\frac{dt}t, 
$$
where $\ell_Q(x)$ is the side length of the cube from $\nex(Q)$ that contains $x$, and we set $\ell_Q(x)=0$ if that cube does not exist.
Then we get
\begin{align}\label{eqsumfac8}
\int_R\int_0^r \!a(x,t)\,\frac{dt}t\,d\sigma(x) & = \int_R\sum_{Q\in\ttt(R):x\in Q} \int_{\ell_Q(x)}^{\ell(Q)} a(x,t)\,\frac{dt}t\,d\sigma(x)\nonumber\\
& 
=\!
\sum_{Q\in\ttt(R)}
\int_Q \int_{\ell_Q(x)}^{\ell(Q)} a(x,t)\,\frac{dt}t\,d\sigma(x).
\end{align}

If $Q\in\ttt(R)\cap\cB(\ve)$, then $\nex(Q)=\Ch(Q)$ and thus $\ell_Q(x)=\ell(Q)/2$ for all $x\in Q$. Therefore, we can estimate
\begin{equation}\label{eqsumfac9}
\int_Q \int_{\ell_Q(x)}^{\ell(Q)} a(x,t)\,\frac{dt}t\,d\sigma(x) \leq \int_Q \int_{\ell(Q)/2}^{\ell(Q)} 1\,\frac{dt}t\,d\sigma(x)\lesssim
\sigma(Q).
\end{equation}

In the case $Q\in\ttt(R)\setminus\cB(\ve)$, we consider the associated Lipschitz domains $\Omega_Q^+$ and $\Omega_Q^-$ constructed in \rf{eqomega+r}. 
We denote by $\omega^\pm_Q$ the respective harmonic measures for $\Omega_Q^\pm$ with respect to poles $p_Q^\pm\in \Omega_Q^\pm$ such that
$\dist(p_Q^\pm,\partial\Omega_Q^\pm)\geq c_2\ell(Q)\approx\diam(\Omega_Q^\pm).$ Since $a(x,t)\leq1$, for $c_3=c_2/2$ and for any $x\in Q$ we have
$$\int_{c_3\ell(Q)}^{\ell(Q)} a(x,t)\,\frac{dt}t\lesssim 1.$$
So we can write
\begin{align}\label{eqsplit41}
\int_Q \int_{\ell_Q(x)}^{\ell(Q)} a(x,t)\,\frac{dt}t\,d\sigma(x) & \leq \int_Q \int_{\ell_Q(x)}^{c_3\ell(Q)} a(x,t)\,\frac{dt}t + C\sigma(Q)\\
& = \sum_{P\in\sss(Q)\cap \DD_\sigma(Q)} \int_P\int_{\ell(P)/2}^{c_3\ell(Q)} a(x,t)\,\frac{dt}t\,d\sigma(x) \nonumber\\
& \quad+ \int_{Z(Q)}\int_0^{c_3\ell(Q)} a(x,t)\,\frac{dt}t\,d\sigma(x) + C\sigma(Q).\nonumber
\end{align}

Notice that if $x\in Q\subset\pom^+$, then $x\in\R^{n+1}\setminus(\Omega^+_Q\cup\Omega^-_Q)$. 
Since $\Omega_Q^\pm\subset \Omega^\pm$, for any $t>0$ we have 
$$\alpha_{\Omega^\pm}(x,t) \leq \alpha_{\Omega_Q^\pm}(x,t),$$
understanding that $\alpha_{\Omega_Q^\pm}(x,t)=\infty$ if $\partial B(x,t)\cap \Omega_Q^\pm=\varnothing$.
So denoting $$a_Q(x,t) = \min\big(\alpha_{\Omega_Q^+}(x,t) + \alpha_{\Omega_Q^-}(x,t)-2,\,1\big),$$
it follows that 
$$a(x,t) \leq a_Q(x,t).$$
Together with Lemma \ref{lem: bounding by logs CAD}, this gives
\begin{equation}\label{eqguai16}
  \int_{\rho(x)}^{r}a(x,t) \frac{dt}t\leq      \int_{\rho}^{r}a_Q(x,t) \frac{dt}t\lesssim \log\left(\frac{\theta_{\omega^+_Q}(x,r)}{\theta_{\omega^+_Q}(x,\rho(x))}\right) + \log\left(\frac{\theta_{\omega^-_Q}(x,r)}{\theta_{\omega^-_Q}(x,\rho(x))}\right) + 1,     
   \end{equation}
for  $\rho(x)$, $r$ such that $2\delta_x \leq \rho(x)\leq r \leq \min_i(\dist(x,p_i))$, with $\delta_x= \max_{i=\pm}(\dist(x,\pom_Q^i))$.

Notice that $Z(Q) \subset \pom\cap \partial\Omega_Q^+\cap \partial\Omega_Q^-$. Notice that the densities
$$\theta_{\omega^\pm_Q}(x,0) = \lim_{r\to 0} \frac{\omega^\pm_Q(x,r)}{r^n}$$
exist $\sigma$-a.e.\ in $Z(Q)$ because $\omega^\pm_Q$ is mutually absolutely continuous with $\HH^n\rest \partial\Omega_Q^\pm$ and $\partial\Omega_Q^\pm$ is $n$-rectifiable. Thus, we deduce that
\begin{align}\label{eqzq+-}
\int_{Z(Q)}\int_0^{c_3\ell(Q)} a(x,t)\,\frac{dt}t\,d\sigma(x) & \lesssim  
\int_{\partial\Omega_Q^+} \log\left(\frac{\theta_{\omega^+_Q}(x,c_3\ell(Q))}{\theta_{\omega^+_Q}(x,0)}\right) \,d\HH^n(x)\\
&\quad +
\int_{\partial\Omega_Q^-} \log\left(\frac{\theta_{\omega^-_Q}(x,c_3\ell(Q))}{\theta_{\omega^-_Q}(x,0)}\right) \,d\HH^n(x) + C\,\sigma(Q).\nonumber
\end{align}

To deal with the first term on the right hand side of \rf{eqsplit41}, we will associate a subset $\Delta_P^\pm\subset \partial\Omega_Q^\pm$ to each $P\in\sss(Q) \cap \DD_\sigma(Q)$.  Observe first that if $P,P'\in\sss(Q)$, then 
\begin{equation}\label{eqbigdistP}
|x_P- x_{P'}|\geq c_4(\ell(P) + \ell(P')),
\end{equation}
for some constant $c_4>0$ depending on the properties of the dyadic lattice $\DD_\sigma$. Then we define
$$\Delta_P^\pm = B(x_P,c_4\ell(P)/2) \cap \partial\Omega_Q^\pm\quad \mbox{ for each $P\in\sss(Q) \cap \DD_\sigma(Q)$.}$$
From \rf{eqbigdistP}, it follows easily that $\Delta_P^+\cap \Delta_{P'}^+=\varnothing$ if $P,P'$ are different cubes from $\sss(Q)\cap \DD_\sigma(Q)$, and the same happens for $\Delta_P^-, \Delta_{P'}^-$.
Notice now that by Lemma \ref{lemgraf}, \rf{eqDR}, and the definitions  $\Omega^+_Q$, $\Omega^-_Q$, and $d_Q$, for any $y\in P$ we have
$$\dist(y,\partial\Omega_Q^\pm) \lesssim \delta\,(d_Q(y) + D_Q(y))\approx \delta\,d_Q(y) \lesssim \delta\,\ell(P).$$
In particular, the center $x_P$ of $P$ satisfies $\dist(x_P,\partial\Omega_Q^\pm) \leq C\delta\,\ell(P)$.
Hence, if $\delta$ is taken small enough, then $B(x_P,c_4\ell(P)/2)$ intersects a big portion of $\partial\Omega_Q^\pm$ and 
it follows that
$$\HH^n(\Delta_P^\pm) \gtrsim \ell(P)^n,$$
by the Ahlfors regularity of $\partial\Omega_Q^\pm$.

For each $P\in\sss(Q)\cap \DD_\sigma(Q)$, by \rf{eqguai16} and  
thanks to the properties of $\Delta_P^\pm$, we have
\begin{align*}
\int_P\int_{\ell(P)/2}^{c_3\ell(Q)} a(x,t)\,\frac{dt}t\,d\sigma(x) &
\lesssim \int_P
   \log\left(\frac{\theta_{\omega^+_Q}(x,\ell(Q))}{\theta_{\omega^+_Q}(x,\ell(P))}\right) \,d\sigma(x)\\
   & \,\,\,\,\,\,\,\,+ \int_P \log\left(\frac{\theta_{\omega^-_Q}(x,\ell(Q))}{\theta_{\omega^-_Q}(x,\ell(P))}\right)\,d\sigma(x) + \sigma(P).  
 \end{align*}
Using now that $\omega_Q^+$ is doubling and that $\sigma(P)\approx \HH^n(\Delta_P^+)$, we derive
\begin{align*}
\int_P
   \log\left(\frac{\theta_{\omega^+_Q}(x,\ell(Q))}{\theta_{\omega^+_Q}(x,\ell(P))}\right) \,d\sigma(x) &
\lesssim  \inf_{x\in \Delta_P^+}
   \log\left(\frac{\theta_{\omega^+_Q}(x,\ell(Q))}{\theta_{\omega^+_Q}(x,\ell(P))}\right) \,\sigma(P) + \sigma(P)\\
&   \lesssim \int_{\Delta_P^+}
   \log\left(\frac{\theta_{\omega^+_Q}(x,\ell(Q))}{\theta_{\omega^+_Q}(x,\ell(P))}\right) \,d\HH^n(x) + \sigma(P).
\end{align*}
The same estimate holds replacing $\omega^+$ and $\Delta_P^+$ by $\omega^-$ and $\Delta_P^-$. Then we deduce
\begin{align*} 
&\sum_{P\in\sss(Q)\cap \DD_\sigma(Q)} \int_P\int_{\ell(P)/2}^{c_3\ell(Q)} a(x,t)\,\frac{dt}t\,d\sigma(x) \\
&
\lesssim \sum_{P\in\sss(Q)\cap \DD_\sigma(Q)} \int_{\Delta_P^+}
   \log\left(\frac{\theta_{\omega^+_Q}(x,\ell(Q))}{\theta_{\omega^+_Q}(x,\ell(P))}\right) \,d\HH^n(x) \\
   & \hspace{3cm}+ \sum_{P\in\sss(Q)\cap \DD_\sigma(Q)}
   \int_{\Delta_P^-}
   \log\left(\frac{\theta_{\omega^-_Q}(x,\ell(Q))}{\theta_{\omega^-_Q}(x,\ell(P))}\right) \,d\HH^n(x)+ \sigma(Q)\\
   & \leq 
    \int_{\partial\Omega_Q^+}
   \log\left(\frac{\theta_{\omega^+_Q}(x,\ell(Q))}{\theta_{\omega^+_Q}(x,2\ell_Q(x))}\right) \,d\HH^n(x)\\
   & \hspace{3cm}+ 
   \int_{\partial\Omega_Q^-}
   \log\left(\frac{\theta_{\omega^-_Q}(x,\ell(Q))}{\theta_{\omega^-_Q}(x,2\ell_Q(x))}\right) \,d\HH^n(x) + \sigma(Q).
\end{align*}   
   
From \rf{eqsplit41}, \rf{eqzq+-}, the preceding estimate, and Lemma \ref{lem2} applied to $\Omega_Q^\pm$, we get  
\begin{align*} 
\int_Q &\int_{\ell_Q(x)}^{\ell(Q)} a(x,t)\,\frac{dt}t\,d\sigma(x)\\
&\lesssim
\int_{\partial\Omega_Q^+}
   \log\left(\frac{\theta_{\omega^+_Q}(x,\ell(Q))}{\theta_{\omega^+_Q}(x,2\ell_Q(x))}\right) \,d\HH^n(x)\\
   & \hspace{3cm}+ 
   \int_{\partial\Omega_Q^-}
   \log\left(\frac{\theta_{\omega^-_Q}(x,\ell(Q))}{\theta_{\omega^-_Q}(x,2\ell_Q(x))}\right) \,d\HH^n(x) + \sigma(Q)
\lesssim \sigma(Q).
\end{align*}
By \rf{eqsumfac8}, \rf{eqsumfac9}, the preceding estimate, and the packing condition \rf{lempack2}, we get
\begin{align*}
\int_R\int_0^r \!a(x,t)\,\frac{dt}t\,d\sigma(x)& = \sum_{Q\in\ttt(R)}
\int_Q \int_{\ell_Q(x)}^{\ell(Q)} a(x,t)\,\frac{dt}t\,d\sigma(x)\\
& \lesssim \sum_{Q\in\ttt(R)}\sigma(Q) \lesssim \sigma(R).
\end{align*}
Using now that $\# I_{\xi,r}\lesssim1$, it follows that
$$\int_{B(\xi,r)} \int_0^r a(x,t)\,\frac{dt}t\,d\sigma(x)\leq \sum_{R\in I_{\xi,r}}\int_R\int_0^r \!a(x,t)\,\frac{dt}t\,d\sigma(x) \lesssim
\sum_{R\in I_{\xi,r}}\sigma(R) \lesssim r^n.$$
\end{proof}

\bibliographystyle{alpha}
\bibdata{references}
\bibliography{references}
\end{document}